\documentclass[a4paper,12pt,draft]{amsart}
\usepackage[margin=30mm]{geometry}
\usepackage{amssymb, amsmath, amsthm, amscd}

\usepackage[T1]{fontenc}
\usepackage{eucal,mathrsfs,dsfont}
\usepackage{color}


\definecolor{mno}{rgb}{0.5,0.1,0.5}

\newcommand{\R}{\mathds R}

\newcommand{\Pp}{\mathds P}
\newcommand{\Ee}{\mathds E}

\newcommand{\I}{\mathds 1}

\newcommand{\Bb}{\mathscr{B}}

\def\<{\langle}
\def\>{\rangle}

\newcommand{\scalar}[1]{\langle#1\rangle}

\newtheorem{theorem}{Theorem}[section]
\newtheorem{lemma}[theorem]{Lemma}
\newtheorem{proposition}[theorem]{Proposition}
\newtheorem{corollary}[theorem]{Corollary}

\theoremstyle{definition}

\newtheorem{remark}[theorem]{Remark}

\begin{document}
\allowdisplaybreaks
\title[SDEs Driven by Multiplicative L\'{e}vy Noises]
{\bfseries Gradient Estimates and Ergodicity for SDEs Driven by
Multiplicative L\'{e}vy Noises via Coupling}

\author{Mingjie Liang \qquad Jian Wang}
\thanks{\emph{M.\ Liang:}
College of Mathematics and Informatics, Fujian Normal University,
350007 Fuzhou, P.R. China. \texttt{liangmingjie@aliyun.com}}
\thanks{\emph{J.\ Wang:}
College of Mathematics and Informatics \& Fujian Key Laboratory of Mathematical Analysis and Applications (FJKLMAA), Fujian Normal University, 350007 Fuzhou, P.R. China. \texttt{jianwang@fjnu.edu.cn}}
\date{}

\maketitle

\begin{abstract}  We consider SDEs driven by multiplicative pure jump L\'{e}vy noises, where L\'evy processes are not necessarily comparable
to $\alpha$-stable-like processes. By assuming that the SDE has a
unique solution, we obtain gradient estimates of the associated
semigroup when the drift term is locally H\"{o}lder continuous, and
we establish the ergodicity of the process both in the
$L^1$-Wasserstein distance and the total variation, when the
coefficients are dissipative for large distances. The proof is
based on a new explicit Markov coupling for SDEs driven by
multiplicative pure jump L\'{e}vy noises, which is derived for the first time in this paper.

\medskip

\noindent\textbf{Keywords:} stochastic differential equation,
multiplicative pure jump L\'{e}vy noises, coupling, gradient
estimate, ergodicity
\medskip

\noindent \textbf{MSC 2010:} 60G51; 60G52; 60J25; 60J75.
\end{abstract}
\allowdisplaybreaks

\section{Introduction and main results}\label{section1}
We consider the following $d$-dimensional stochastic differential equation (SDE)
driven by multiplicative pure jump L\'{e}vy noises
\begin{equation}\label{s1} {d}X_t=b(X_{t})\,dt+\sigma(X_{t-})\,dZ_t,\quad  X_0=x\in \R^d,\end{equation}
where $b: \R^d\rightarrow\R^d$ is measurable, $\sigma:
\R^d\rightarrow\R^d\otimes\R^d$ is continuous, and
$Z:=(Z_t)_{t\ge0}$ is a pure jump L\'{e}vy process on $\R^d$, i.e.,
the finite-dimensional distributions of the process $Z$ are uniquely
characterized by the characteristic function $$\Ee
e^{i\scalar{\xi,Z_t}} = e^{-t\phi_Z(\xi)},\quad  \xi\in \R^d, t>0$$
with
  $$ \phi_Z(\xi) =\int\big(1-e^{i\langle{\xi},{z}\rangle}+i\langle{\xi},{z}\rangle\I_{\{|z|\le 1\}}(z)\big)\,\nu({d}z).$$
Here, $\nu$ is the L\'evy measure, i.e.,\ a $\sigma$-finite measure
on $(\R^d, \mathscr{B}(\R^d))$ such that $\nu(\{0\})=0$ and $\int(1\wedge |z|^2)\,\nu(dz)<\infty$.

Throughout this paper, we always assume that there exists a non-explosive and pathwise unique
solution to SDE \eqref{s1}, see \cite{BLP, BBC, HP,
Pr1, Pr2, Pr3,TTW, XWang,Z} for more details. We also need the
following two assumptions on the coefficient $\sigma(x)$:
\begin{itemize}
\item $\sigma(x)$ is uniformly non-degenerate in the sense that, there exists a constant $\Lambda\ge1$
such that for all $\xi\in \R^d$,
\begin{equation}\label{p:est0}\begin{split}\Lambda^{-1}|\xi|\le &\inf_{x\in \R^d}\{|\sigma(x)\xi| \wedge
|\sigma(x)^{-1}\xi|\} \le \sup_{x\in \R^d} \{|\sigma(x)\xi| \vee
|\sigma(x)^{-1}\xi|\} \le \Lambda|\xi|,\end{split}\end{equation}
where $|\cdot|$ denotes the Euclidean norm.

\item $\sigma(x)$ is bounded  and globally Lipschitz
continuous, i.e.,\ there is a constant $L_\sigma>0$ such that for
all $x,y\in \R^d$,
$$\|\sigma(x)-\sigma(y)\|_{{\rm H.S.}}\le L_\sigma|x-y|,$$ where $\|\cdot\|_{{\rm H.S.}}$ denotes the Hilbert-Schmidt norm of a matrix, and $L_\sigma$ is called the Lipschitz constant.
\end{itemize}

The  goal of the present paper is to establish the regularity of the
semigroups and the ergodicity of the process corresponding to the
SDE \eqref{s1} driven by  multiplicative pure jump L\'{e}vy noises.
More explicitly, we not only extend the main results of \cite{Lwcw}
to multiplicative L\'{e}vy noises setting, but also establish the
regularity of the semigroups when the drift term is locally
H\"{o}lder continuous. The L\'evy process in this paper can be
non-symmetric and not comparable with the $\alpha$-stable-type
process. The methods used in this paper rely on coupling techniques
and constructing coupling operators. We emphasize that the coupling
for SDEs driven by multiplicative pure jump L\'{e}vy noises, which
has been open for a long time, is derived for the first time in this
paper.

Coupling for SDEs driven by multiplicative Brownian motions is a
well developed field, and there is a vast literature on this topic;
we mention here the papers \cite{CL,LR,PW, Wang16} and the monographs \cite{Chen, Lin, Tho, Wangbook}.
Notice that, in contrast with the case of coupling for SDEs driven
by multiplicative Brownian motions, our case for multiplicative
L\'evy noises is quite different. Consider the SDE \eqref{s1} on $\R^d$
with the L\'evy process $Z$ replaced by a Brownian
motion $(B_t)_{t\ge0}$, i.e.,
\begin{equation}\label{s00} dX_t=b(X_t)\,dt+\sigma(X_t)\,dB_t,\quad X_0=x\in \R^d.\end{equation} Assume that for
any $x\in \R^d$, $\sigma(x)\sigma(x)^*\ge
\lambda_0\mathrm{I}_{d\times d}$ with some constant $\lambda_0>0$,
where $\sigma(x)^*$ is the transpose of $\sigma(x)$. We can
reformulate \eqref{s00} as
$$dX_t=b(X_t)\,dt+\sqrt{\lambda_0}\,dB_t'+\sigma_0(X_t)\,dB_t'',\quad X_0=x\in \R^d,$$
where $(B_t')_{t\ge0}$ and $(B_t'')_{t\ge0}$ are two independent
$d$-dimensional Brownian motions on $\R^d$, and $\sigma_0(x):\R^d\to
\R^d\otimes \R^d$ satisfies that
$\sigma(x)\sigma(x)^*=\lambda_0\mathrm{I}_{d\times
d}+\sigma_0(x)\sigma_0(x)^*$. By the formula above, one can reduce
the coupling for SDEs driven by multiplicative Brownian motions into
the case for additive Brownian motions. For example, we use coupling
by reflection for $(B'_t)_{t\ge0}$ and coupling by parallel
displacement for $(B_t'')_{t\ge0}$. Usually the term of coupling
by reflection for $(B'_t)_{t\ge0}$ plays a leading role in
applications, see \cite{PW, Wang16}. However, such nice additive
property fails if we apply to the SDE \eqref{s1}, i.e.,\ replace
$(B_t)_{t\ge0}$ by the L\'evy process $(Z_t)_{t\ge0}$ in the
argument above. We cannot  use the above technique based on the
decomposition, and in some sense the coupling for SDEs driven by
multiplicative L\'evy noises is highly non-trivial. Indeed, we will
construct the coupling for SDEs driven by multiplicative L\'evy
noises directly through the coupling operator for the associated
generator.

In the existing mathematical literature  there are a few works
devoted to coupling for SDEs with additive L\'{e}vy noises, i.e., the coefficient $\sigma(x)$ in \eqref{s1} is independent of the space
variable. The readers
can refer to \cite{Lwcw,M15,Wang} for an essential progress. In
particular, the couplings used in \cite{M15,Wang} depend heavily on
the existence of the rotationally symmetric component for L\'{e}vy
measure; while in the framework of \cite{Lwcw} only the existence of
absolutely continuous component of L\'{e}vy measure is required, and
then the main result of \cite{Lwcw} works for some non-symmetric and
even singular L\'{e}vy measure. However, there is no result for the
coupling for  SDEs driven by multiplicative L\'{e}vy processes till
now. The difficulty is due to the fact that in this situation an efficient
coupling shall pay attention to not only the shape of L\'evy measure
itself but also the diffusion coefficient, both of which are usually
hard to handle. An important contribution of this paper is to fill this gap.

\ \

To illustrate the contribution of our paper, we present the
following statement, which is a special case of our main results in
Section \ref{section44}. Denote  by $X:=(X_t)_{t\ge0}$ the unique
solution to the SDE \eqref{s1}. For any $f\in B_b(\R^d)$ (the set of
bounded measurable functions on $\R^d$), let
$$P_tf(x)=\Ee^xf(X_t),\quad x\in \R^d,t\ge0.$$ We will assume that one of assumptions below holds for the L\'evy measure $\nu$:
\begin{itemize}
\item[(i)] $$\nu(dz)\ge \I_{\{|z|\le \eta\}}\frac{c_{0}}{|z|^{d+\alpha}}\,dz$$ for some $\eta\in(0,1)$ and $c_{0}>0$.
\item[(ii)] When $\sigma(x)=(\sigma_{i,j}(x))_{d\times d}$ is diagonal, i.e.,\, $\sigma_{i,j}(x)=0$ for all $x\in \R^d$ and $1\le i\neq j\le d$,
 \begin{equation}\label{con:lm}\nu(dz)\ge \I_{\{0<z_1\le \eta\}}\frac{c_{0}}{|z|^{d+\alpha}}\,dz\end{equation}
for some $\eta\in(0,1)$ and $c_{0}>0$.
\end{itemize}

\begin{theorem}\label{T:thm1} Assume that the diffusion coefficient $\sigma(x)$ is bounded and Lipschitz continuous, and the drift term $b(x)$ is locally $\beta$-H\"{o}lder
continuous with $\beta\in((1-\alpha)\vee0,1]$ for some $\alpha\in
(0,2)$.
If one of assumptions $(i)$ and $(ii)$ above is satisfied for the L\'evy measure $\nu$, then the following hold.
\begin{itemize}
\item[(1)] If $\alpha\in (1,2)$, then for any $\theta>0$, there exists a constant $C_1:=C_1(\theta)>0$ such that for all $f\in B_b(\R^d)$ and $t>0$,
$$
  \sup_{x\neq y}\frac{{|P_t f(x)-P_t f(y)|}}{|x-y|}
  \le C_1\|f\|_\infty \left(\frac{\log^{1+\theta}(1/(t\wedge1))}{t\wedge1}\right)^{1/\alpha}.$$
  \item[(2)] If $\alpha\in(0,1]$, then, for any $\theta\in(0,\alpha)$, there exists a constant $C_2:=C_2(\theta)>0$ such that for all $f\in B_b(\R^d)$ and $t>0$, $$
  \sup_{x\neq y}\frac{{|P_t f(x)-P_t f(y)|}}{|x-y|^\theta}
  \le C_2\|f\|_\infty t^{-\theta/\alpha}.$$

 \end{itemize} \end{theorem}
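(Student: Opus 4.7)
The plan is to exploit the novel Markov coupling for the SDE \eqref{s1} constructed in the paper. Let $(X_t,Y_t)_{t\ge 0}$ be the coupled process starting from $(x,y)$ with $x\neq y$, driven by the coupling operator $\widetilde L$ whose two marginals both reproduce the generator $L$ of \eqref{s1}; let $T:=\inf\{t\ge 0:X_t=Y_t\}$ be the coupling time. Since the diagonal piece of $\widetilde L$ is synchronous (and $\sigma(X)=\sigma(Y)$ on $\{X=Y\}$), the coupling is sticky, $X_s=Y_s$ for $s\ge T$, and
$$
|P_tf(x)-P_tf(y)|\le \Ee|f(X_t)-f(Y_t)|\le 2\|f\|_\infty\,\Pp(T>t).
$$
Both parts then reduce to quantitative bounds on $\Pp(T>t)$ in terms of $|x-y|$ and $t$.

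The heart of the argument is a pointwise coupling estimate, for a non-decreasing concave test function $\psi:[0,\infty)\to[0,\infty)$ with $\psi(0)=0$, of the form
$$
\widetilde L\psi(x,y)\le -\mathcal R(\psi)(x,y)+\mathcal E(\psi)(x,y),\qquad x\neq y,
$$
where $\mathcal R(\psi)\asymp \psi(|x-y|)/|x-y|^\alpha$ is the coercive contribution extracted from the $\alpha$-stable-like lower bound on $\nu$ given by (i) or (ii), while $\mathcal E(\psi)$ collects the error from the Lipschitz continuity of $\sigma$ and the local $\beta$-H\"older continuity of $b$. The condition $\beta>(1-\alpha)\vee 0$ is precisely what makes $\mathcal E$ strictly subdominant to $\mathcal R$ as $|x-y|\to 0$. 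Applying Dynkin's formula to $\psi(|X_{t\wedge T}-Y_{t\wedge T}|)$ and invoking Gronwall's inequality then yields the required decay of $\Ee\psi(|X_{t\wedge T}-Y_{t\wedge T}|)$.

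For $\alpha\in(1,2)$ we take a time-dependent test function $\psi_t(r)\asymp r\wedge\delta(t)$ and optimise $\delta(t)$ to balance the coupling rate $\delta(t)^{-\alpha}$ against the logarithmic drift error, giving $\delta(t)\asymp\bigl((t\wedge 1)/\log^{1+\theta}(1/(t\wedge 1))\bigr)^{1/\alpha}$; converting the resulting $L^1$-type estimate into a bound on $\Pp(T>t)$ via a Chebyshev-type inequality produces part (1). For $\alpha\in(0,1]$ we take the time-independent $\psi(r)=r^\theta$ with $\theta\in(0,\alpha)$; its concavity matches the subcritical scaling directly, and the parallel Gronwall argument yields $\Pp(T>t)\le C|x-y|^\theta t^{-\theta/\alpha}$, which is part (2).

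The principal technical obstacle is the pointwise coupling inequality $\widetilde L\psi\le-\mathcal R(\psi)+\mathcal E(\psi)$. In the additive-noise case of \cite{Lwcw} the increment $X-Y$ jumps independently of $(X,Y)$ and the classical mirror transformation $z\mapsto -z+\text{(projection)}$ sends both processes to a common point; here the jumps of $X$ and $Y$ are scaled by $\sigma(X_{t-})$ and $\sigma(Y_{t-})$ respectively, so the same change of variables yields distinct images and the naive mirror coupling fails. Under assumption (i) one uses the rotationally invariant lower bound on $\nu$ to implement a change of variables that cancels the mismatch $\sigma(X)-\sigma(Y)$ up to a Lipschitz error absorbed into $\mathcal E$; under assumption (ii) the diagonal form of $\sigma$ permits a reduction to the first-coordinate projection, with the half-cone $\{z_1>0\}$ playing the role of the mirror region. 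Carrying out this cancellation cleanly, while keeping the drift-induced error compatible with the H\"older exponent condition $\beta>(1-\alpha)\vee 0$, is the most delicate step of the proof.
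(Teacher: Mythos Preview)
Your high-level strategy (Markov coupling, concave test function, coupling-time bound) matches the paper, and your choice $\psi(r)=r^\theta$ for part (2) is exactly what the paper uses. However, the mechanism by which you convert the pointwise estimate on $\widetilde L\psi$ into a bound on $\Pp(T>t)$ is not correct as written, and your test function for part (1) does not produce the required coercivity.

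On the mechanism: the coupling estimate one actually obtains is of the form $\widetilde L\psi(|x-y|)\le -c\,|x-y|^{\theta-\alpha}$ (respectively $-c\,|x-y|^{1-\alpha}\log^{-(1+\theta)}(1/|x-y|)$) for small $|x-y|$. This is \emph{not} of the Gronwall form $\widetilde L\psi\le -\lambda\psi$; the right-hand side blows up as $|x-y|\downarrow 0$, and one cannot close a differential inequality for $\Ee\psi(|X_{t\wedge T}-Y_{t\wedge T}|)$ via Jensen without also controlling $\Pp(T>t)$, which is precisely the unknown. Moreover, ``Chebyshev'' cannot recover $\Pp(T>t)$ from $\Ee\psi$ since $\psi(0)=0$. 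The paper instead localises with the exit time $S_\varepsilon=\inf\{t:|X_t-Y_t|>\varepsilon\}$: on $\{|x-y|\in[1/n,\varepsilon]\}$ the estimate reads $\widetilde L\psi\le -\lambda(\varepsilon)$ with a \emph{constant} right-hand side, so Dynkin gives $\Ee(T_n\wedge S_\varepsilon)\le \psi(|x-y|)/\lambda(\varepsilon)$; the optional-stopping/monotonicity bound $\Pp(S_\varepsilon<T_n)\le \psi(|x-y|)/\psi(\varepsilon)$ handles the complementary event; combining and optimising over $\varepsilon$ yields
\[
\Pp(T>t)\le \psi(|x-y|)\inf_{\varepsilon\in(0,\varepsilon_0]}\Big[\frac{1}{\psi(\varepsilon)}+\frac{1}{t\lambda(\varepsilon)}\Big],
\]
which for $\psi(r)=r^\theta$, $\lambda(\varepsilon)\asymp\varepsilon^{\theta-\alpha}$ gives $t^{-\theta/\alpha}$ after taking $\varepsilon\asymp t^{1/\alpha}$.

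On the test function for $\alpha\in(1,2)$: with $\psi(r)\asymp r\wedge\delta$ one has $\psi''\equiv 0$ on $(0,\delta)$, so the second-difference term $\psi(2r)+\psi(0)-2\psi(r)$ that carries the $\alpha$-stable coercivity vanishes identically there, and no negative term survives to dominate the H\"older drift error $c\,\psi'(r)r^\beta$. The paper instead takes $\psi(r)=r\big(1-\log^{-\theta}(1/r)\big)$ for small $r$, so that $\psi''(r)\asymp -r^{-1}\log^{-(1+\theta)}(1/r)$; this gives $J(r)r^2\psi''(2r)\asymp -r^{1-\alpha}\log^{-(1+\theta)}(1/r)$, which dominates both the drift error $\psi'(r)r^\beta$ (since $\beta>1-\alpha$) and the Lipschitz-$\sigma$ error $K(r)\psi'(r)r$. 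Plugging $\lambda_\psi(\varepsilon)\asymp \varepsilon^{1-\alpha}\log^{-(1+\theta)}(1/\varepsilon)$ and $\psi(\varepsilon)\asymp\varepsilon$ into the display above and optimising produces the factor $\big(\log^{1+\theta}(1/(t\wedge1))/(t\wedge1)\big)^{1/\alpha}$.
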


Gradient estimates for the semigroup associated to SDEs driven by
multiplicative subordinated Brownian motions have been obtained in
\cite{WXZ} by using the Malliavin calculus and a finite-jump
approximation argument. For SDEs with multiplicative Brownian
motions and general Poisson jump processes, Takeuchi \cite{Tak}
obtained the derivative formula for the associated semigroups, by using stochastic
diffeomorphism flows and Girsanov's transformation. Later, based on
Bismut's approach to the Malliavin calculus with jumps, a derivative
formula of Bismut-Elworthy-Li's type was established in \cite{WXZ1}
for SDEs with multiplicative $\alpha$-stable-like processes and
maybe including non-degenerate diffusion part. It is clear that
under \eqref{con:lm}, the L\'evy process $(Z_t)_{t\ge0}$ is not
comparable with the $\alpha$-stable-like process, so that the tool based on the Malliavin calculus with
jumps used in \cite{WXZ,WXZ1} cannot apply.

  Next, we consider the ergodicity for the SDE given by \eqref{s1}. Let $\psi$ be a strictly increasing function on $[0,\infty)$ satisfying $\psi(0)=0$. Given two probability measures $\mu_1$ and $\mu_2$ on $\R^d$, define
  $$W_\psi(\mu_1,\mu_2)=\inf_{\Pi\in \mathscr{C}(\mu_1,\mu_2)}\int_{\R^d\times\R^d} \psi(|x-y|)\,d\Pi(x,y),$$
where $\mathscr{C}(\mu_1,\mu_2)$ is the collection of measures on
$\R^d\times\R^d$ having $\mu_1$ and $\mu_2$ as marginals. When
$\psi$ is concave, the above definition gives rise to a
{Wasserstein distance} $W_\psi$ in the space of probability
measures $\mu$ on $\R^d$ such that $\int \psi(|z|)\,\mu(d
z)<\infty$. If $\psi(r)=r$ for all $r\geq 0$, then $W_\psi$ is the
standard $L^1$-Wasserstein distance (with respect to the Euclidean
norm $|\cdot|$), which will be denoted by $W_1(\mu_1,\mu_2)$ for
simplicity. Another well-known example for $W_\psi$ is given by
$\psi(r)=\I_{(0,\infty)}(r)$, which leads to the total variation
distance $W_{\psi}(\mu_1,\mu_2)= \frac{1}{2} \|\mu_1-\mu_2\|_{{\rm
Var}}.$

 \begin{theorem} \label{T:222} Assume that the diffusion coefficient $\sigma(x)$ is  Lipschitz
continuous with Lipschiz constant $L_\sigma>0$.
Suppose furthermore that
\begin{itemize}
 \item[(i)] the L\'evy measure $\nu$ satisfies that \begin{equation}\label{e:ffgg}\int_{\{|z|\ge1\}}|z|\,\nu(dz)<\infty\end{equation} and one of assumptions $(i)$ and $(ii)$ before Theorem $\ref{T:thm1}$.
 \item[(ii)] the drift term $b(x)$ satisfies
\begin{equation}\label{e:tss1}\begin{split}
  \frac{\langle b(x)-b(y), x-y\rangle}{|x-y|}\le \begin{cases} K_1|x-y|^\beta,&\quad|x-y|<l_0,\\
  -K_2|x-y|,&\quad|x-y|\ge l_0\end{cases}\end{split}\end{equation}
 for all $x,y\in \R^d$ with some constants $\beta\in((1-\alpha)\vee0,1]$, $l_0\ge 0$ $ K_1\ge0$ and $K_2>0$. \end{itemize} Then, there exist constants $C,\lambda>0$ such that for all $x,y\in \R^d$ and $t>0$,
 $$W_1(\delta_xP_t, \delta_yP_t)\le C e^{-\lambda t} |x-y|$$ and \begin{equation}\label{e:varss} \|\delta_xP_t- \delta_yP_t\|_{{\rm Var}}\le C e^{-\lambda t}(1+|x-y|).\end{equation} Moreover, \eqref{e:varss} holds true even for $\beta=0$ in \eqref{e:tss1}.
 \end{theorem}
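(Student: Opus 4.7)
The plan is to apply the explicit Markov coupling $(X_t,Y_t)$ for SDE~\eqref{s1} constructed earlier in the paper, starting from $(X_0,Y_0)=(x,y)$, and to study $|X_t-Y_t|$ through a concave Lyapunov function $\varphi\colon[0,\infty)\to[0,\infty)$ with $\varphi(0)=0$ and $c_1 r\le \varphi(r)\le c_2 r$ for all $r\ge 0$. In the spirit of \cite{Lwcw,PW,Wang16}, I would choose $\varphi$ so that the coupling operator $L^{(c)}$ satisfies
\begin{equation*}
L^{(c)}\varphi(|x-y|)\le -\lambda\,\varphi(|x-y|)\quad\text{for all } x\ne y,
\end{equation*}
for some $\lambda>0$. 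Dynkin's formula then gives $\Ee\varphi(|X_t-Y_t|)\le e^{-\lambda t}\varphi(|x-y|)$, and the two-sided linear bound on $\varphi$ immediately delivers $W_1(\delta_xP_t,\delta_yP_t)\le (c_2/c_1)\,e^{-\lambda t}|x-y|$.

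\textbf{Choice of $\varphi$ and the generator inequality.} I would build $\varphi$ in two regimes. For $r\ge l_0$ the dissipativity in \eqref{e:tss1} gives a drift contribution of order $-K_2 r$, so $\varphi$ can be taken essentially linear with a mild concave correction that also makes the gluing smooth. For $r<l_0$ the positive term $K_1 r^\beta$ from the drift has to be absorbed by the jump-coupling contribution: under assumption $(i)$ or $(ii)$ of Theorem~\ref{T:thm1}, the refined coupling by reflection on the absolutely continuous small-jump part of $\nu$, pushed through the nondegenerate $\sigma$, produces a strictly negative contribution to $L^{(c)}\varphi(r)$ of order $r^{\alpha-1}$ times a derivative of $\varphi$. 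The restriction $\beta>(1-\alpha)\vee 0$ is exactly what guarantees that this negative term dominates $K_1 r^\beta$. Concretely, on $[0,l_0]$ I would take $\varphi(r)=\int_0^r g(s)\,ds$ for a suitably chosen strictly decreasing positive $g$, and glue smoothly to an essentially linear piece on $[l_0,\infty)$; the generator inequality is then verified by direct computation and optimisation of constants.

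\textbf{Total variation bound and the case $\beta=0$.} For \eqref{e:varss} I would use the standard coupling inequality $\|\delta_xP_t-\delta_yP_t\|_{{\rm Var}}\le 2\Pp(T>t)$ with $T:=\inf\{t\ge 0:X_t=Y_t\}$, and bound $\Pp(T>t)$ by a two-stage argument. First, the $W_1$-contraction above together with \eqref{e:ffgg} (which ensures finite first moment of the L\'evy component and integrability of $|X_t-Y_t|$) forces the distance to enter a bounded region in exponentially short time. Second, inside that region the refined coupling by reflection on the absolutely continuous small-jump part of $\nu$ supplies a positive per-unit-time rate of coalescence, yielding $\Pp(T>t)\le C(1+|x-y|)e^{-\lambda t}$. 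Because the second stage relies only on the jump-coupling mechanism and not on any regularity of $b$ near the diagonal, exactly the same argument goes through when $\beta=0$ in \eqref{e:tss1}; the $W_1$-contraction needed in the first stage is available under the mere dissipativity at large distances.

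\textbf{Main obstacle.} The crux of the work is the verification of the generator inequality in the multiplicative setting. Unlike the additive case of \cite{Lwcw,M15,Wang}, here $\sigma(x)\ne\sigma(y)$ in general, and the coupling acts on the jump part with a distorted intensity. Cross terms involving $(\sigma(x)-\sigma(y))z$ and the distortion of the reflection map under $\sigma$ must be controlled through the Lipschitz assumption on $\sigma$, and this forces a delicate simultaneous choice of the breakpoint $l_0$ and the profile $g$ on $[0,l_0]$. It is precisely at this step that the condition $\beta>(1-\alpha)\vee 0$ is used sharply. Once the generator inequality is established, the remainder of the proof is standard semimartingale calculus combined with the classical coupling inequality.
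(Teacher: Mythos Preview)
Your strategy for the $W_1$ bound is essentially the paper's: it reduces to an abstract contraction result (Theorem~\ref{thtpw}, following \cite[Theorem 4.2]{Lwcw}) whose Lyapunov function is $\psi(r)=c_1 r+\int_0^r e^{-c_2 g(s)}\,ds$ on $[0,2l_0]$, extended linearly beyond, with $g$ built from $\sigma(r)\asymp r^{1-\alpha}$ and $\Phi_1(r)=K_1^\ast r^\beta$. One terminology correction: the coupling used is the \emph{refined basic coupling}, not a coupling by reflection; the second marginal's jump is $\sigma(y)\Psi(z)$ with $\Psi(z)=\sigma(y)^{-1}(\sigma(x)z+(x-y)_\kappa)$, a translation-type shift designed to make the post-jump positions coincide, not a reflection of the jump direction. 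Also, the helpful jump term is of order $J(r)r^2\psi''(2r)\asymp r^{2-\alpha}\psi''$, not $r^{\alpha-1}$ times a first derivative; the condition $\beta>(1-\alpha)\vee 0$ enters as integrability of $g_2(r)=\int_0^r \Phi_1(s)/(s\sigma(s))\,ds$.

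For the total variation bound the paper takes a different, more direct route than your two-stage hitting argument. It invokes a second abstract result (Theorem~\ref{thtvart}, following \cite[Theorem 4.4]{Lwcw}) which uses a \emph{single} Lyapunov function $\psi$ behaving like $1+r$ and only requires the weaker near-diagonal bound $K_1$ (a constant) in place of $K_1 r^\beta$; this is precisely why $\beta=0$ is admissible for \eqref{e:varss}. Your two-stage scheme is viable in principle, but as written it has a gap in the $\beta=0$ case: the ``first stage'' appeals to ``the $W_1$-contraction above'', yet that contraction was proved only under $\beta>(1-\alpha)\vee 0$. To make your route work for $\beta=0$ you would need a separate argument (for instance, a drift estimate for a function like $1+|X_t-Y_t|$ using only the large-distance dissipativity) before invoking the small-jump coalescence mechanism; the paper's one-step Lyapunov approach sidesteps this entirely.
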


As a consequence of Theorem \ref{T:222}, we have
\begin{corollary}\label{C:c3}Under the setting of Theorem $\ref{T:222}$, there exist a unique probability measure $\mu$, some positive functions $C_1(x), C_2(x)$ and a constant $\lambda>0$ such that
$$  W_1(\delta_xP_t, \mu)\le C_1(x) e^{-\lambda t},\quad x\in \R^d, t>0,$$ and $$ \|\delta_xP_t- \mu\|_{{\rm Var}}\le C_2(x) e^{-\lambda t},\quad x\in \R^d,t>0.$$  \end{corollary}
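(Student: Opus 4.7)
The proof follows the classical Krylov--Bogolyubov strategy, leveraging the quantitative exponential contraction provided by Theorem \ref{T:222}. First I would observe that the $W_1$-contraction stated for Dirac initial conditions extends to arbitrary probability measures $\mu_1,\mu_2$ with finite first moments: given an optimal $W_1$-coupling $\Pi$ of $(\mu_1,\mu_2)$, integrating the pointwise bound against $\Pi$ yields
$$W_1(\mu_1 P_t,\mu_2 P_t)\le \int_{\R^d\times\R^d} W_1(\delta_x P_t,\delta_y P_t)\,\Pi(dx,dy)\le C e^{-\lambda t}\,W_1(\mu_1,\mu_2),$$
and the same argument upgrades the total variation inequality \eqref{e:varss} to arbitrary initial measures.

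Next, I fix $x\in\R^d$ and show that $\{\delta_x P_t\}_{t\ge 0}$ is Cauchy in $(\mathcal{P}_1(\R^d),W_1)$. For $t>s>0$, writing $\delta_x P_t=(\delta_x P_{t-s})P_s$ and applying the extended contraction,
$$W_1(\delta_x P_t,\delta_x P_s)\le C e^{-\lambda s}\,W_1(\delta_x P_{t-s},\delta_x)\le C e^{-\lambda s}\bigl(|x|+\sup_{r\ge0}\Ee|X_r^x|\bigr).$$
The uniform moment bound $\sup_{r\ge 0}\Ee|X_r^x|<\infty$ follows from a standard It\^o/Lyapunov argument using $V(x)=|x|^2$ together with the dissipativity \eqref{e:tss1} (taking $y=0$ gives $\langle b(x),x\rangle\le -K_2|x|^2+c$ for $|x|\ge l_0$), the boundedness or linear growth of $\sigma$, and the big-jump moment condition \eqref{e:ffgg}. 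Hence $\delta_x P_t\to\mu_x$ in $W_1$ for some $\mu_x\in\mathcal{P}_1(\R^d)$, which by lower semicontinuity of $W_1$ has finite first moment.

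The limit $\mu_x$ is invariant: the extended contraction makes each $P_s$ Lipschitz on $(\mathcal{P}_1(\R^d),W_1)$, so passing to the limit in the semigroup identity $\delta_x P_{t+s}=(\delta_x P_t)P_s$ yields $\mu_x=\mu_x P_s$. Uniqueness is immediate: if $\mu,\mu'$ are two invariant probability measures with finite first moments, then
$$W_1(\mu,\mu')=W_1(\mu P_t,\mu' P_t)\le C e^{-\lambda t}\,W_1(\mu,\mu')\xrightarrow{t\to\infty}0,$$
forcing $\mu=\mu'$. In particular $\mu_x$ does not depend on $x$, and we call it $\mu$.

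Finally the advertised rates are a direct consequence of invariance and the extended contraction:
$$W_1(\delta_x P_t,\mu)=W_1(\delta_x P_t,\mu P_t)\le C e^{-\lambda t}\int_{\R^d}|x-y|\,\mu(dy)=:C_1(x)e^{-\lambda t},$$
and analogously, using the extended total variation bound,
$$\|\delta_x P_t-\mu\|_{\mathrm{Var}}\le \int_{\R^d}\|\delta_x P_t-\delta_y P_t\|_{\mathrm{Var}}\,\mu(dy)\le C e^{-\lambda t}\int_{\R^d}(1+|x-y|)\,\mu(dy)=:C_2(x)e^{-\lambda t}.$$
The only nontrivial step I foresee is the uniform-in-time moment estimate $\sup_r \Ee|X_r^x|<\infty$; once this is in hand, the rest is a soft consequence of Theorem \ref{T:222}. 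Since the dissipativity of $b$ at large distances together with \eqref{e:ffgg} provides a Foster--Lyapunov drift inequality, this bound is standard, but it is where the assumptions of Theorem \ref{T:222} really enter the corollary.
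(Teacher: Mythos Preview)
Your argument is correct in spirit and reaches the same conclusion, but the route differs from the paper's. The paper does not run a Cauchy-in-$W_1$ argument. Instead it first establishes a Foster--Lyapunov inequality $Lf(x)\le -c_4 f(x)+c_5$ for a smooth function with $f(x)=|x|$ for $|x|\ge 1$, then observes that the $W_1$-contraction of Theorem~\ref{T:222} implies $\|P_tf\|_{\mathrm{Lip}}\le Ce^{-\lambda t}\|f\|_{\mathrm{Lip}}$, hence the Feller property; existence of an invariant probability measure with finite first moment then comes from the Meyn--Tweedie criterion. Uniqueness is deduced from the total variation bound rather than the $W_1$-contraction, and the final rate estimates are obtained exactly as you do, by integrating the pointwise bounds against $\mu$. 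Your direct Cauchy argument is arguably more self-contained (no appeal to \cite{MT}), while the paper's route makes the Feller property explicit and separates existence from the contraction.

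One genuine slip: your proposed Lyapunov function $V(x)=|x|^2$ does not work under the stated hypotheses. Computing $LV$ produces a term $\int_{\{|z|>1\}}|\sigma(x)z|^2\,\nu(dz)$, which requires $\int_{\{|z|>1\}}|z|^2\,\nu(dz)<\infty$, whereas only the first-moment condition \eqref{e:ffgg} is assumed. This is exactly why the paper uses $f(x)\sim|x|$ rather than $|x|^2$: with a linear Lyapunov function the large-jump contribution is controlled by $\int_{\{|z|>1\}}|z|\,\nu(dz)$. Once you replace $|x|^2$ by a $C^3$ function equal to $|x|$ outside a neighbourhood of the origin, your uniform moment bound $\sup_{r\ge 0}\Ee|X_r^x|<\infty$ goes through and the rest of your argument is fine.
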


When the coefficient $\sigma(x)$ is independent of the space
variable (i.e.,\ the noise is additive) and the drift term $b$ is
dissipative for large distances (i.e., satisfies \eqref{e:tss1}), in \cite{Wang} the second
author established the exponential convergence rate in the
$L^p$-Wasserstein distance for any $p\ge1$ when the L\'{e}vy noise
in \eqref{s1} has an $\alpha$-stable component.  For a large class
of L\'{e}vy processes whose associated L\'{e}vy measure has a
rotationally invariant absolutely continuous component, Majka
obtained in \cite{M15} the exponential convergence rates with
respect to both the $L^1$-Wasserstein distance and the total
variation. Recently, the results of \cite{M15,Wang} are extended and
improved in \cite{Lwcw}, where the associated L\'{e}vy measure of
L\'{e}vy process is only assumed to have an absolutely continuous
component. It is noticed that all the works above are restricted to
the additive noise case. Once the coefficient $\sigma(x)$ depends on
the space variable (i.e.,\ the noise is multiplicative), the problem gets
more complicated. When the coefficients are locally Lipschitz
continuous and satisfy a Lyapunov type dissipative condition, it has
been shown in \cite{A0,K0,M0} that there is a unique invariant
probability measure associated to the SDE \eqref{s1}, which is
exponentially ergodic. Recently, Xie and Zhang studied in
\cite{XWang} the exponential ergodicity of SDEs driven by general
multiplicative L\'evy noises (maybe with Brownian motions), when $b$
is locally bounded and maybe singular at infinity. To the best of
our knowledge, there is no result about the exponential convergence
rates with respect to the $L^1$-Wasserstein distance, when the
coefficient $\sigma(x)$ is bounded and Lipschitz continuous, and $b(x)$ is dissipative for large distances.

\ \

The remainder of this paper is arranged as follows. In Section \ref{section2},
we first review the refined basic coupling for L\'evy processes constructed in \cite{Lwcw},
 and then present a new coupling for multiplicative L\'{e}vy process, which is a key part of our paper.
 We also prove the existence of coupling process here.
 Section \ref{Section3} is devoted to some explicit estimates for the coupling operator, which is a necessary
 ingredient of our proof. General ideas to yield the regularity of the semigroups and the ergodicity of
 the process via coupling are presented in Section \ref{section44}.  Finally, we present proofs of Theorems \ref{T:thm1}, \ref{T:222} and Corollary \ref{C:c3} in the last section.

\section{Coupling operator and coupling process}\label{section2}
\subsection{Coupling operator for the SDE \eqref{s1}}
 Denote by $X:=(X_t)_{t\ge0}$ the solution to the SDE \eqref{s1}. It is easy to see that the generator $L$ of the process $X$ acting on $C_b^2(\R^d)$ is given by
  \begin{equation}\label{proofth21}
  \begin{split}
  Lf(x)=&\langle \nabla f(x),b(x)\rangle\\
  &+\int\!\!\Big(f(x+\sigma(x)z)-f(x)-\langle\nabla f(x), \sigma(x)z\rangle\I_{\{|z|\le 1\}}(z)\Big)\,\nu(\mathrm{d}z).
  \end{split}
  \end{equation} The purpose of this subsection is to construct a new and efficient coupling operator for $L$, which is one of crucial ingredients in our approach. Recall that an operator $\widetilde L$ acting on $C_b^2(\R^{2d})$ is a coupling of $L$, if for any $f,g\in C_b^2(\R^d)$,
  \begin{equation}\label{marginality}
  \widetilde L h(x,y)= L f(x)+ Lg(y),
  \end{equation} where $h(x,y)=f(x)+g(y)$ for all $x,y\in\R^d$.

 \subsubsection{{\bf Additive L\'evy noises}}
 To illustrate clearly ideas of the construction of a proper coupling for the SDE \eqref{s1}, in this part we briefly introduce
 \emph{the refined basic coupling operator} constructed in \cite[Section 2]{Lwcw} for SDEs driven by additive L\'evy noises
 (that is, the case that $\sigma(x)=\mathrm{I}_{d\times d}$ for any $x\in \R^d$ in the SDE \eqref{s1}).

Throughout this part, we consider the operator $L$ given by \eqref{proofth21}, where $\sigma(x)=\mathrm{I}_{d\times d}$ for all $x\in \R^d$.
Motivated by the (classical) \emph{basic coupling} for Markov $q$-processes or Makov chains
(see \cite[Example 2.10]{Chen} for instance), we can define the following basic coupling for the operator $L$. For any $h\in C_b^2(\R^{2d})$ and $x,y\in \R^d$,
  let \begin{align*}
  \widetilde L h(x,y)&= \<\nabla_x h(x,y), b(x)\>+\<\nabla_y h(x,y), b(y)\>\\
  &\quad+ \int \!\!\Big(h(x+z, y+z+(x-y))-h(x,y)-\<\nabla_x h(x,y), z\>\I_{\{|z|\leq 1\}}\\
  &\hskip30pt -\<\nabla_y h(x,y), z+(x-y)\>\I_{\{|z+(x-y)|\leq 1\}} \Big)\,\mu_{y-x}(dz)\\
  &\quad + \int \!\!\Big(h(x+z, y)-h(x,y)-\<\nabla_x h(x,y), z\>\I_{\{|z|\leq 1\}}\Big)\,(\nu-\mu_{y-x}) (dz)\\
  &\quad + \int \!\!\Big(h(x, y+z)-h(x,y)-\<\nabla_y h(x,y), z\>\I_{\{|z|\leq 1\}}\Big)\,(\nu-\mu_{x-y}) (dz).
  \end{align*}
Here, $\mu_{y-x}(dz):=[\nu\wedge (\delta_{y-x} \ast\nu)](dz)$, and $\nabla_xh(x,y)$ and $\nabla_yh(x,y)$ are defined as the gradient of $h(x,y)$ with respect to $x\in\R^d$ and $y\in \R^d$ respectively.
For the sake of our explanation, by the structure of the generator of L\'evy process, the coupling above can be simply written as follows:
  \begin{equation}\label{basic-coup-1}
  (x,y)\longrightarrow
    \begin{cases}
    (x+z, y+z+(x-y)), & \mu_{y-x}(dz);\\
    (x+z, y), & (\nu - \mu_{y-x})(dz);\\
    (x, y+z), & (\nu - \mu_{x-y})(dz).
    \end{cases}
  \end{equation} In the first row the distance between two marginals decreases from $|x-y|$ to $|(x+z)- (y+z+(x-y))|=0$, so this term plays a key role in coupling two marginals together. For this aim, its density enjoys the biggest jump rate $\mu_{y-x}(dz)$, i.e.,\, the maximum common part of the jump intensities from $x$ to $x+z$ and from $y$ to $y+z+(x-y)$. However, the second and the last rows are not so welcome (indeed they are not so easy to handle for L\'evy jumps), because the new distance are $|x-y+z|$ or $|x-y-z|$, which can be much bigger than the original distance $|x-y|$ when the jump size $z$ is large.

  To overcome this disadvantage, we figure out the following coupling:
  \begin{equation}\label{basic-coup-2}
  (x,y)\longrightarrow
    \begin{cases}
    (x+z, y+z+(x-y)), & \frac12 \mu_{y-x}(dz);\\
    (x+z, y+z+(y-x)), & \frac12 \mu_{x-y}(dz);\\
    (x+z, y+z), & \big(\nu - \frac12 \mu_{y-x}  -\frac12 \mu_{x-y}\big)(dz).
    \end{cases}
  \end{equation} Now, in the second row the distance after the jump is $2|x-y|$. Though it doubles the original distance, it is better than that in \eqref{basic-coup-1} when the jump size $z$ is large. (In some sense, it is also easier to deal with.) Besides, the distance remains unchanged in the last row.

  The above coupling \eqref{basic-coup-2} has a drawback too. For example, if the original pure jump L\'evy process is of finite range, then the jump intensity $\mu_{y-x}(dz)$ is identically zero when $|x-y|$ is large enough. Hence, two marginal processes of the coupling \eqref{basic-coup-2} will never get closer if they are initially far away. Therefore, we need further modify the coupling above.  Let $\kappa>0$, and for any $x$, $y\in\R^d$, define
$$
  (x-y)_{\kappa}=\bigg(1\wedge \frac{\kappa}{|x-y|}\bigg)(x-y).
$$ In \cite[Section 2]{Lwcw} we finally modify the coupling above into \begin{equation}\label{basic-coup-3}
  (x,y)\longrightarrow
    \begin{cases}
    (x+z, y+z+(x-y)_\kappa), & \frac12 \mu_{(y-x)_\kappa}(dz);\\
    (x+z, y+z+(y-x)_\kappa), & \frac12 \mu_{(x-y)_\kappa}(dz);\\
    (x+z, y+z), & \big(\nu - \frac12 \mu_{(y-x)_\kappa}  -\frac12 \mu_{(x-y)_\kappa}\big)(dz).
    \end{cases}
  \end{equation}
We see that if $|x-y|\leq \kappa$, then the above coupling is the same as that in \eqref{basic-coup-2}. If $|x-y|>\kappa$, then according to the first two rows, the distances after the jump are $|x-y|-\kappa$ and $|x-y|+\kappa$, respectively. Therefore, the parameter $\kappa$ serves as the threshold to determine whether the marginal processes jump to the same point or become slightly closer to each other. We call the coupling given by \eqref{basic-coup-3} the \emph{refined basic coupling} for pure jump L\'evy processes. By making full use of this coupling, we have obtained some new results for Wasserstein-type distances for SDEs with additive L\'evy noises, where L\'evy measure can be much singular. The reader can refer to \cite{Lwcw} for more details.

\subsubsection{{\bf Multiplicative L\'{e}vy noises.}}
For the SDE \eqref{s1} driven by multiplicative L\'{e}vy noises, the
jump system of the generater $L$ given by \eqref{proofth21} can be
simply understood as
$$x\longrightarrow x+\sigma(x)z, \qquad \nu(dz).$$ One may follow the construction of the refined basic coupling \eqref{basic-coup-3} above, and consider the following coupling (we can prove that this indeed associates with a coupling operator)
 \begin{equation*}
  (x,y)\longrightarrow
    \begin{cases}
    (x+\sigma(x)z, y+\sigma(y)(z+(x-y)_\kappa)), & \frac12 \mu_{(y-x)_\kappa}(dz);\\
    (x+\sigma(x)z, y+\sigma(y)(z+(y-x)_\kappa)), & \frac12 \mu_{(x-y)_\kappa}(dz);\\
    (x+\sigma(x)z, y+\sigma(y)z), & \big(\nu - \frac12 \mu_{(y-x)_\kappa}  -\frac12 \mu_{(x-y)_\kappa}\big)(dz).
    \end{cases}
  \end{equation*}
However, due to the appearance of diffusion coefficient $\sigma(x)$, we cannot  compare the distance after jump and the original distance from the coupling above. Actually, for coupling of SDEs with multiplicative L\'{e}vy noises, the situation becomes more complex, and we cannot  directly use the refined basic coupling. Roughly speaking, a reasonable and efficient coupling now should pay attention to the role of coefficient $\sigma(x)$.

Before moving further, we need some notation and elementary facts. Let $\Psi:\,\R ^d\rightarrow \R ^d$ be a continuous and bijective mapping, i.e., $\Psi$ is invertible and satisfies that $\Psi(\R ^d)=\R^d$. We further assume that $\Psi(0)\neq0$.
For any $n\ge1$, we define \begin{equation}\label{e:coumea}\mu_{\Psi}=\limsup_{n\to
\infty}\mu_{n,\Psi}:=\limsup_{n\to \infty}(\nu_n\wedge
(\nu_n{\Psi})),\end{equation} where $\nu_n(A)=\int_{A\cap\{|z|> 1/n\}}\,\nu(dz)$ and
$(\nu_n\Psi)(A)=\nu_n(\Psi(A))$ for all $A\in \mathscr{B}(\R^d)$.
The following observation is frequently used in the arguments below.
\begin{lemma}\label{C:mea} \begin{itemize}
\item[(1)] For any $A\in \mathscr{B}(\R^d)$,
$$(\mu_{\Psi}\Psi^{-1})(A)=\mu_{\Psi^{-1}}(A).$$
\item[(2)] Both $\mu_{\Psi}$ and $\mu_{\Psi^{-1}}$ are finite measures on $(\R^d, \mathscr{B}(\R^d))$.  \end{itemize}
\end{lemma}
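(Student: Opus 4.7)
The plan is to reduce both parts to a single measure-theoretic identity for the minimum of two measures, exploiting that $\nu_n$ is increasing in $n$ so the $\limsup$ in \eqref{e:coumea} is actually a monotone pointwise limit. Indeed, $\nu_n(A)=\int_{A\cap\{|z|>1/n\}}\nu(dz)$ is increasing in $n$; hence so is $\nu_n\Psi$, and also $\mu_{n,\Psi}=\nu_n\wedge(\nu_n\Psi)$ (because the minimum of two measures is monotone in each argument). By monotone convergence, $\mu_\Psi(A)=\lim_n \mu_{n,\Psi}(A)$ for all Borel $A$, so $\mu_\Psi$ is a Borel measure; the same applies to $\mu_{\Psi^{-1}}$.

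For part (1), the core input is the general identity
\begin{equation*}
  (m_1\wedge m_2)\Psi^{-1}=(m_1\Psi^{-1})\wedge(m_2\Psi^{-1}),
\end{equation*}
valid for finite measures $m_1,m_2$ whenever $\Psi$ is a Borel bijection with Borel-measurable inverse. This follows directly from the variational characterization $(m_1\wedge m_2)(B)=\inf_{C\subseteq B}\{m_1(C)+m_2(B\setminus C)\}$, by parametrising measurable $C\subseteq \Psi^{-1}(A)$ as $C=\Psi^{-1}(D)$ with $D\subseteq A$ and using $\Psi^{-1}(A)\setminus\Psi^{-1}(D)=\Psi^{-1}(A\setminus D)$. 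Applying this identity with $m_1=\nu_n$ and $m_2=\nu_n\Psi$, and noting the trivial fact $(\nu_n\Psi)\Psi^{-1}=\nu_n$, yields $\mu_{n,\Psi}\Psi^{-1}=(\nu_n\Psi^{-1})\wedge \nu_n=\mu_{n,\Psi^{-1}}$. Passing to the monotone limit in $n$ proves part (1). The required Borel-measurability of $\Psi^{-1}$ is automatic: by Brouwer's invariance of domain, any continuous bijection $\R^d\to\R^d$ is a homeomorphism.

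For part (2), the hypothesis $\Psi(0)\neq 0$ is the key. Setting $y_0:=\Psi(0)\neq 0$, by continuity there exists $\eta>0$ with $\Psi(\{|z|\leq\eta\})\subseteq\{|w|\geq |y_0|/2\}$. Using $\mu_{n,\Psi}\leq \nu_n\Psi$ on $\{|z|\leq\eta\}$ and $\mu_{n,\Psi}\leq\nu_n$ on its complement,
\[
  \mu_{n,\Psi}(\R^d)\leq (\nu_n\Psi)(\{|z|\leq\eta\})+\nu_n(\{|z|>\eta\})\leq \nu(\{|w|\geq |y_0|/2\})+\nu(\{|z|>\eta\}),
\]
which is finite because both sets are bounded away from $0$ and $\nu$ is a L\'evy measure. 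Letting $n\to\infty$ gives $\mu_\Psi(\R^d)<\infty$. Injectivity of $\Psi$ and $\Psi(0)\neq 0$ together force $\Psi^{-1}(0)\neq 0$, so the symmetric argument applied to $\Psi^{-1}$ yields finiteness of $\mu_{\Psi^{-1}}$.

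The only delicate step is the pushforward identity for $\wedge$ in part (1); once it is established, both conclusions follow from monotonicity plus a one-line continuity estimate. The single technical nuance is the Borel-measurability of the forward map $A\mapsto \Psi(A)$, which is handled cleanly by invariance of domain.
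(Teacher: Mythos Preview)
Your proof is correct and follows essentially the same approach as the paper: both establish $(\mu_{n,\Psi}\Psi^{-1})(A)=\mu_{n,\Psi^{-1}}(A)$ via a characterization of $\wedge$ (you use the variational formula $\inf_{C\subseteq B}\{m_1(C)+m_2(B\setminus C)\}$, the paper uses the Jordan--Hahn form $m_1-(m_1-m_2)^+$) and then pass to the limit, and both prove finiteness by the same continuity-at-$0$ splitting. Your explicit attention to monotonicity of $\mu_{n,\Psi}$ in $n$ (so that the $\limsup$ is an honest monotone limit and hence a measure) and to invariance of domain (so that $\Psi^{-1}$ is continuous and $\Psi$ sends Borel sets to Borel sets) fills in technical details the paper leaves implicit; the only cosmetic difference is that the paper deduces $\mu_{\Psi^{-1}}(\R^d)<\infty$ from part~(1) rather than by your symmetric argument.
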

\begin{proof} (1) Recall that for any two finite measures $\mu_1$ and $\mu_2$ on $(\R^d,\Bb(\R^d))$,
  $$\mu_1\wedge\mu_2:=\mu_1-(\mu_1-\mu_2)^+,$$
where $(\mu_1-\mu_2)^{+}$ and $(\mu_1-\mu_2)^{-}$ refer to the Jordan-Hahn decomposition of the signed measure $\mu_1-\mu_2$. In detail, for any $A\in\mathscr{B}(\R^d)$,
  $$(\mu_1-\mu_2)^{+}(A)=\sup\limits_{B\in \mathscr{B}(\R^d)}\{\mu_1(B)-\mu_2(B): B\subset A\}.$$

Note that $\nu_n$ and $\nu_n\Psi$ are finite measures on $(\R^d,\mathscr{B}(\R^d))$. By the definition of $\mu_{n,\Psi}$, for any $A\in \mathscr{B}(\R^d)$, we have
\begin{align*}
(\mu_{n,\Psi}\Psi^{-1})(A)&=\mu_{n,\Psi}(\Psi^{-1}(A))\\
&=[\nu_n\wedge (\nu_{n}\Psi)](\Psi^{-1}(A))=[(\nu_{n}\Psi)\wedge \nu_n](\Psi^{-1}(A))\\
&=(\nu_{n}\Psi)(\Psi^{-1}(A))-((\nu_{n}\Psi)-\nu_n)^+(\Psi^{-1}(A))\\
&=\nu_n(A)-\sup\limits_{B\in \mathscr{B}(\R^d)}\{(\nu_{n}\Psi)(B)-\nu_n(B):B\subset \Psi^{-1}(A)\}\\
&=\nu_n(A)-\sup\limits_{B\in \mathscr{B}(\R^d)}\{\nu_n(\Psi(B))-(\nu_n{\Psi^{-1}})(\Psi(B)):\Psi(B)\subset A\}\\
&=\nu_n(A)-\sup\limits_{\widetilde{B}\in \mathscr{B}(\R^d)}\{\nu_n(\widetilde{B})-(\nu_n\Psi^{-1})(\widetilde{B}):\widetilde{B}\subset A\}\\
&=\nu_n(A)-(\nu_n-(\nu_{n}\Psi^{-1}))^+(A)=(\nu_n\wedge(\nu_{n}\Psi^{-1}))(A)\\
&=\mu_{n,\Psi^{-1}}(A),
 \end{align*}
where in equalities above we used the fact that $\Psi$ is a
bijective mapping from $\R^d$ to $\R^d$. Then, the first required
assertion immediately follows from the equality above.

(2) Since $\Psi(0)\neq 0$ and $\Psi$ is continuous, there exists a
constant $\varepsilon_0>0$ such that $c_0:=\inf\{|\Psi(z)|:
|z|\le \varepsilon_0\}>0$. Thus, for any $n\ge1$,
\begin{align*}\int_{\R^d}(\mu_{n,\Psi})(dz)&=\int_{\R^d}\,(\nu_n\wedge (\nu_{n}\Psi))(dz)\\
&\le\int_{\{|z|> \varepsilon_0\}}\nu_n(dz)+\int_{\{|z|\le
\varepsilon_0\}}(\nu_{n}\Psi)(dz)\\
&\le \int_{\{|z|>\varepsilon_0\}}\nu_n(dz)+ \nu_n(\{\Psi(z):
|z|\le \varepsilon_0\})\\
&\le \nu(\{z\in \R^d: |z|> \varepsilon_0\})+\nu(\{z\in \R^d: |z|\ge
c_0\})\\
&=:c(c_0,\varepsilon_0)<\infty.
\end{align*} Letting $n\to \infty$, we get that
$\mu_{\Psi}(\R^d)\le c(c_0,\varepsilon_0)<\infty$. By (i) and the fact that $\Psi$ is
bijective, it also holds that $\mu_{\Psi^{-1}}(\R^d)<\infty$.
\end{proof}

Now, we consider the jump system  as follows:
  \begin{equation}\label{basic-coup-4}
  (x,y)\longrightarrow
    \begin{cases}
    (x+\sigma(x)z, y+\sigma(y)\Psi(z)), & \frac{1}{2} \mu_{\Psi}(dz);\\
    (x+\sigma(x)z, y+\sigma(y)\Psi^{-1}(z)), & \frac{1}{2} \mu_{\Psi^{-1}}(dz);\\
    (x+\sigma(x)z, y+\sigma(y)z), & \big(\nu - \frac{1}{2} \mu_{\Psi}  -\frac{1}{2} \mu_{\Psi^{-1}}\big)(dz).
    \end{cases}
  \end{equation}
More explicitly, for any $h\in C_b^2(\R^{d}\times\R^d)$ and $x,y\in \R^d$, we define
\begin{equation}\label{cp1}\begin{split}
\widetilde{L} h(x,y)=&\langle \nabla_xh(x,y),b(x)\rangle+\langle
\nabla_yh(x,y),b(y)\rangle\\
&+\frac{1}{2}\int\Big( h(x+\sigma(x)z,y+ \sigma(y)\Psi(z))-h(x,y)\\
&\qquad\qquad-\langle\nabla_xh(x,y), \sigma(x)z\rangle \I_{\{|z|\le
1\}}\\
     &\qquad\qquad-\langle\nabla_yh(x,y), \sigma(y)\Psi(z)\rangle \I_{\{|\Psi(z)|\le 1\}}\Big)\,\mu_{\Psi}(dz)\\
&+\frac{1}{2}\int\Big( h(x+\sigma(x)z,y+ \sigma(y)\Psi^{-1}(z))-h(x,y)\\
    &\qquad\qquad-\langle\nabla_xh(x,y), \sigma(x)z\rangle \I_{\{|z|\le 1\}}\\
    &\qquad\qquad-\langle\nabla_yh(x,y), \sigma(y)\Psi^{-1}(z)\rangle \I_{\{|\Psi^{-1}(z)|\le 1\}}\Big)\,\mu_{\Psi^{-1}}(dz)\\
&+\int\Big( h(x+\sigma(x)z,y+\sigma(y)z)-h(x,y)\\
&\qquad\qquad-\langle\nabla_xh(x,y), \sigma(x)z\rangle \I_{\{|z|\le
1\}}\\
    &\qquad\qquad-\langle\nabla_yh(x,y), \sigma(y)z\rangle \I_{\{|z|\le 1\}}\Big)\,\Big(\nu -\frac{1}{2}\mu_{\Psi} -\frac{1}{2}\mu_{\Psi^{-1}}\Big)(dz).\end{split}
 \end{equation}
By Lemma \ref{C:mea}(1), we can prove rigorously the following
statement.
\begin{theorem}\label{t:coup} The operator $ \widetilde{L}$ defined by \eqref{cp1} is a
coupling operator of the operator $L$ given by
\eqref{proofth21}.\end{theorem}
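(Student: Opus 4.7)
The plan is to verify \eqref{marginality} directly: substitute $h(x,y) = f(x) + g(y)$ into the formula \eqref{cp1} for $\widetilde L$ and match the result with $Lf(x) + Lg(y)$. With this choice, $\nabla_x h(x,y) = \nabla f(x)$ and $\nabla_y h(x,y) = \nabla g(y)$, so the drift terms on both sides match trivially. The task reduces to checking that the three jump integrals in \eqref{cp1} reassemble into the nonlocal part of $Lf(x)$ when one restricts to the $f$-pieces, and into the nonlocal part of $Lg(y)$ when one restricts to the $g$-pieces.

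The $f$-piece is the easy direction. In each of the three integrals of \eqref{cp1} it has the common form
$$f(x+\sigma(x)z) - f(x) - \langle \nabla f(x), \sigma(x) z\rangle \I_{\{|z|\le 1\}},$$
integrated against $\tfrac12 \mu_{\Psi}$, $\tfrac12 \mu_{\Psi^{-1}}$, and $\nu - \tfrac12 \mu_{\Psi} - \tfrac12 \mu_{\Psi^{-1}}$, respectively. These three weights sum to $\nu$, and by Lemma \ref{C:mea}(2) the measures $\mu_{\Psi}, \mu_{\Psi^{-1}}$ are finite, so there is no ambiguity in splitting and recombining them, and the sum collapses to the nonlocal part of $Lf(x)$.

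The $g$-piece is where Lemma \ref{C:mea}(1) does the real work. The three $g$-contributions have distinct arguments $\sigma(y)\Psi(z)$, $\sigma(y)\Psi^{-1}(z)$, $\sigma(y)z$ and distinct small-jump compensators carrying indicators $\I_{\{|\Psi(z)|\le 1\}}$, $\I_{\{|\Psi^{-1}(z)|\le 1\}}$, $\I_{\{|z|\le 1\}}$. I will apply the pushforward identity $\int \varphi(\Psi(z))\,\mu(dz) = \int \varphi(w)\,(\mu\Psi^{-1})(dw)$ in the first integral with $w = \Psi(z)$: by Lemma \ref{C:mea}(1), $\mu_{\Psi}\Psi^{-1} = \mu_{\Psi^{-1}}$, and the indicator $\I_{\{|\Psi(z)|\le 1\}}$ is transformed into $\I_{\{|w|\le 1\}}$. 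Symmetrically, the substitution $u = \Psi^{-1}(z)$ in the second integral converts it into an integral against $\mu_{\Psi}$ of the same standard shape. After these two substitutions all three $g$-contributions carry identical integrand form, and their measure weights again recombine to $\nu$, yielding exactly the nonlocal part of $Lg(y)$.

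The main technical obstacle is to justify the splittings and the changes of variable through an absolute-integrability check. For the compensated small-jump pieces one invokes $f,g\in C_b^2$ and the Taylor bound $|\varphi(p+\sigma(p)\zeta) - \varphi(p) - \langle \nabla \varphi(p), \sigma(p)\zeta\rangle| \le C(p)|\zeta|^2$ together with $\int(1\wedge|z|^2)\,\nu(dz)<\infty$; for the tail pieces (including the large-jump parts of the compensators) one uses boundedness of $f,g,\nabla f,\nabla g$ and the finiteness of $\mu_{\Psi}, \mu_{\Psi^{-1}}$ from Lemma \ref{C:mea}(2). Once absolute convergence is secured, the pushforward substitutions are legitimate, and the identity $\widetilde L h = Lf + Lg$ follows term-by-term.
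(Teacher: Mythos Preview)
Your proposal is correct and follows essentially the same approach as the paper's proof: the $f$-piece collapses by the trivial identity $\tfrac12\mu_\Psi+\tfrac12\mu_{\Psi^{-1}}+(\nu-\tfrac12\mu_\Psi-\tfrac12\mu_{\Psi^{-1}})=\nu$, and the $g$-piece is handled by the change-of-variable substitutions $w=\Psi(z)$ and $u=\Psi^{-1}(z)$ together with Lemma~\ref{C:mea}(1). The only cosmetic difference is that the paper treats $h(x,y)=f(x)$ and $h(x,y)=g(y)$ separately rather than plugging in $f(x)+g(y)$ at once, and it omits the explicit integrability check you sketch (relying implicitly on Lemma~\ref{C:mea}(2)).
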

\begin{proof} First, let $h(x,y)=f(x)$ for any $x,y\in\R^d$, where $f\in C_b^2(\R^d)$.
Obviously, it holds that
\begin{align*}
 \widetilde{L} h(x,y)
    &=\langle \nabla f(x),b(x)\rangle\\
    &\quad+\frac{1}{2}\int\Big(f(x+\sigma(x)z)-f(x)-\langle\nabla f(x), \sigma(x)z\rangle \I_{\{|z|\le 1\}}\Big)\,\mu_{\Psi}(dz)\\
    &\quad+\frac{1}{2}\int\Big( f(x+\sigma(x)z)-f(x)-\langle\nabla f(x), \sigma(x)z\rangle \I_{\{|z|\le 1\}}\Big)\,\mu_{\Psi^{-1}}(dz)\\
    &\quad+\int\Big(f(x+\sigma(x)z)-f(x)-\langle\nabla f(x), \sigma(x)z\rangle \I_{\{|z|\le 1\}}\Big)\\
    &\qquad\qquad  \times \Big(\nu -\frac{1}{2}\mu_{\Psi} -\frac{1}{2}\mu_{\Psi^{-1}}\Big)(dz)\\
    &=Lf(x).
  \end{align*}

Secondly, let $h(x,y)=g(y)$ for any $x,y\in\R^d$, where $g\in
C_b^2(\R^d)$. Then, \begin{align*} \widetilde{L} h(x,y)
&=\langle \nabla g(y),b(y)\rangle\\
&\quad+ \frac{1}{2}\int \Big(g(y+ \sigma(y)\Psi(z))-g(y)\\
    &\qquad\qquad\qquad\qquad -\langle\nabla g(y), \sigma(y)\Psi(z)\rangle \I_{\{|\Psi(z)|\le
    1\}}\Big)\,\mu_{\Psi}(dz)\\
&\quad+\frac{1}{2}\int \Big( g(y+ \sigma(y)\Psi^{-1}(z))-g(y)\\
    &\qquad\qquad\qquad\qquad-\langle\nabla g(y), \sigma(y)\Psi^{-1}(z)\rangle \I_{\{|\Psi^{-1}(z)|\le 1\}}\Big)\,\mu_{\Psi^{-1}}(dz)\\
    &\quad+\int \Big(g(y+\sigma(y)z)-g(y)-\langle\nabla g(y), \sigma(y)z\rangle \I_{\{|z|\le 1\}}\Big)\\
    &\qquad\qquad\qquad\qquad\qquad \times\Big(\nu -\frac{1}{2}\mu_{\Psi} -\frac{1}{2}\mu_{\Psi^{-1}}\Big)(dz)\\
&= \langle \nabla g(y),b(y)\rangle\\
&\quad+ \frac{1}{2}\int \Big(g(y+ \sigma(y)\Psi(z))-g(y)\\
    &\qquad\qquad\qquad\qquad -\langle\nabla g(y), \sigma(y)\Psi(z)\rangle \I_{\{|\Psi(z)|\le
    1\}}\Big)\,(\mu_{\Psi^{-1}}\Psi)(dz)\\
&\quad+\frac{1}{2}\int \Big( g(y+ \sigma(y)\Psi^{-1}(z))-g(y)\\
    &\qquad\qquad\qquad\qquad-\langle\nabla g(y), \sigma(y)\Psi^{-1}(z)\rangle \I_{\{|\Psi^{-1}(z)|\le 1\}}\Big)\,(\mu_{\Psi}\Psi^{-1})(dz)\\
    &\quad+\int \Big(g(y+\sigma(y)z)-g(y)-\langle\nabla g(y), \sigma(y)z\rangle \I_{\{|z|\le 1\}}\Big)\\
    &\qquad\qquad\qquad\qquad\qquad \times\Big(\nu -\frac{1}{2}\mu_{\Psi} -\frac{1}{2}\mu_{\Psi^{-1}}\Big)(dz)\\
   & = \langle \nabla g(y),b(y)\rangle\\
&\quad+ \frac{1}{2}\int \Big(g(y+ \sigma(y)z)-g(y)\\
    &\qquad\qquad\qquad\qquad -\langle\nabla g(y), \sigma(y)z\rangle \I_{\{|z|\le
    1\}}\Big)\,\mu_{\Psi^{-1}}(dz)\\
&\quad+\frac{1}{2}\int \Big( g(y+ \sigma(y)z)-g(y)\\
    &\qquad\qquad\qquad\qquad-\langle\nabla g(y), \sigma(y)z\rangle \I_{\{|z|\le 1\}}\Big)\,\mu_{\Psi}(dz)\\
    &\quad+\int \Big(g(y+\sigma(y)z)-g(y)-\langle\nabla g(y), \sigma(y)z\rangle \I_{\{|z|\le 1\}}\Big)\\
    &\qquad\qquad\qquad\qquad\qquad \times\Big(\nu -\frac{1}{2}\mu_{\Psi} -\frac{1}{2}\mu_{\Psi^{-1}}\Big)(dz)\\
    &=Lg(y),\end{align*} where the second equality follows from Lemma \ref{C:mea}(1) and we used the measure transformations $\mu_{\Psi^{-1}}\Psi\mapsto
    \mu_{\Psi^{-1}}$ and $\mu_{\Psi}\Psi^{-1}\mapsto \mu_{\Psi}$ in the third equality.

     Combining both equalities above, we know that \eqref{marginality}
     holds true, and so the desired assertion follows.
\end{proof}

According to Theorem \ref{t:coup}, there exist a lot of
(non-trivial) coupling operators for the generator $L$ given by
\eqref{proofth21}. By the refined basic coupling
\eqref{basic-coup-3} (in particular the first row here), a proper
choice of $\Psi$ in \eqref{basic-coup-4} should satisfy that
$$x+\sigma(x)z=y+\sigma(y)\Psi(z)$$ for all $x,y\in\R^d$ with
$0<|x-y|\le \kappa$ for some constant $\kappa>0$. For this, in the remainder of this paper, we will
take
\begin{equation}\label{e:psi}\Psi(z)=\Psi_{\kappa,x,y}(z):=\sigma(y)^{-1}\big(\sigma(x)z+(x-y)_\kappa\big),\end{equation}
where $\kappa>0$ (which is a constant determinated later) and
$(x-y)_\kappa=\big(1\wedge \frac{\kappa}{|x-y|}\big)(x-y)$. \emph{Note
that, $\Psi(z)$ depends on  $\kappa$, $x$ and $y$, and for
simplicity we omit $\kappa,x,y$ in the notation.} Clearly,
$$\Psi^{-1}(z)=\sigma(x)^{-1}\big(\sigma(y)z-(x-y)_\kappa\big).$$ In
particular, with this choice, when $\sigma(x)={\rm I}_{d\times d}$ for
all $x\in \R^d$, \eqref{basic-coup-4} is reduced into
\eqref{basic-coup-3}. Moreover, by the nondegenerate property and
the continuity of $\sigma$, we know that for any $x,y\in\R^d$ with
$x\neq y$, $\Psi:\,\R ^d\rightarrow \R ^d$ is a continuous and
bijective mapping such that $\Psi(0)\neq 0$. In particular, Lemma
\ref{C:mea} applies.

\subsection{Coupling process}\label{secu1}
In this subsection, we prove the existence of the coupling process associated with the coupling operator $\widetilde{L}$ defined by \eqref{cp1}. We assume that the SDE \eqref{s1} has a unique strong solution.  By the L\'{e}vy--It\^{o} decomposition,
  $$Z_t=\int_0^t\int_{\{|z|>1\}}z\,N(ds,dz)+\int_0^t\int_{\{|z|\le1\}}z\,\tilde{N}(ds,dz),$$
where $N(ds,dz)$ is a Poisson random measure associated with $(Z_t)_{t\ge0}$, i.e.,\, $$N(ds,dz)=\sum_{\{0<{s'}\le s, \Delta Z_{s'}\neq 0\}}\delta_{(s', \Delta Z_{s'})}(ds,dz),$$ and
  $$\tilde{N}(ds,dz)=N(ds,dz)-ds\,\nu(dz)$$
is the corresponding compensated Poisson measure.
In order to write a coupling process explicitly, we extend the Poisson random measure $N$ from  $\R_+\times \R^d$ to $\R_+\times \R^d\times [0,1]$ in the following way \begin{equation*}\label{Poisson-meas}
  N(ds,dz,du) =\sum_{\{0<{s'}\le s, \Delta Z_{s'}\neq 0\}}\delta_{(s', \Delta Z_{s'})}(ds,dz)\I_{[0,1]}(du).
  \end{equation*}
and write
  $$Z_t=\int_0^t\int_{\R^d\times[0,1]} z\,\bar{N}(ds,dz,du),$$
where
  $$\bar{N}(ds,dz,du)=\I_{\{|z|>1\}\times[0,1]}N(ds,dz,du)+\I_{\{|z|\le1\}\times [0,1]}\tilde{N}(ds,dz,du).$$

Let $Z$ be a pure jump L\'evy process on $\R^d$ given above. We will construct a new L\'evy process $Z^*$ on $\R^d$
as follows. Suppose that a jump of $Z$ occurs at time $t$, and that
the process $Z$ moves from the point $Z_{t-}$ to $Z_{t-} + z$. Then,
we draw a random number $u\in [0,1]$ to determine whether the
process $Z^\ast$ should jump from the point $Z^\ast_{t-}$ to the
points $Z^\ast_{t-} +\Psi(z)$, $Z^\ast_{t-}+\Psi^{-1}(z)$ and
$Z^\ast_{t-}+z$, respectively. By taking into account the
characterization \eqref{basic-coup-4} for the coupling operator
$\widetilde L$ defined by \eqref{cp1}, the random number $u$ should
be determined by the following two factors:
  \begin{equation}\label{e:llle}\rho_\Psi(x,y,z)=\frac{\mu_{\Psi}(dz)}{\nu(dz)},\,\,\rho_{\Psi^{-1}}(x,y,z)=\frac{\mu_{\Psi^{-1}}(dz)}{\nu(dz)},\quad x,y,z\in \R^d.\end{equation} It is clear that both $\rho_\Psi(x,y,z)$ and $\rho_{\Psi^{-1}}(x,y,z)\in[0,1]$.
More explicitly, we will consider the system of equations:
  \begin{equation}\label{SDE-coup-eq-1}
  \begin{cases}
  dX_t=b(X_t)\, dt+\sigma(X_{t-})\,dZ_t,& X_0=x;\\
  dY_t= b(Y_t)\,dt+ \sigma(Y_{t-})\,dZ^\ast_t, & Y_0=y,
  \end{cases}
  \end{equation} where
  \begin{equation}\label{coup-SDE-2}
  \begin{split}
  d Z^\ast_t&= \int_{\R^d\times [0,1]} \Big[\Psi(z) \I_{\{u\le \frac12 \rho_{\Psi}(X_{t-},Y_{t-},z)\}} \\
  &\qquad \qquad \quad\,\,+\Psi^{-1}(z) \I_{\{\frac12 \rho_{\Psi}(X_{t-},Y_{t-},z)< u\le \frac12 [\rho_{\Psi}(X_{t-},Y_{t-},z)+\rho_{\Psi^{-1}}(X_{t-},Y_{t-},z)]\}}\\
  &\qquad\qquad\quad \,\, + z \I_{\{\frac12 [\rho_{\Psi}(X_{t-},Y_{t-},z)+\rho_{\Psi^{-1}}(X_{t-},Y_{t-},z)]< u\le 1\}}\Big]  \bar{N}(dt,dz,du)\\
  &\quad - \int_{\R^d\times [0,1]} \!
  \Big[ \Psi(z)\! \Big(\I_{\{|\Psi(z)|\le 1\}} \! -\!\I_{\{|z|\le 1\}}\Big)\! \I_{\{u\le \frac12 \rho_{\Psi}(X_{t-},Y_{t-},z)\}}\\
  &\qquad \qquad \quad\,\,\,\,+\Psi^{-1}(z)\Big(\I_{\{|\Psi^{-1}(z)\le 1\}} -\I_{\{|z|\le 1\}}\Big)\\
  &\qquad \qquad  \times \I_{\{\frac12 \rho_{\Psi}(X_{t-},Y_{t-},z)< u\le
  \frac12 [\rho_{\Psi}(X_{t-},Y_{t-},z)+\rho_{\Psi^{-1}}(X_{t-},Y_{t-},z)]\}}\Big] \,\nu(dz)\,du\,dt.
  \end{split}
  \end{equation} Note that, by Lemma \ref{C:mea}(2), $\mu_\Psi$ and $\mu_{\Psi^{-1}}$ are finite measures on $(\R^d,\mathscr{B}(\R^d))$, and so  \eqref{coup-SDE-2} is well defined.

  \begin{proposition} Suppose that the SDE \eqref{s1} has a unique strong solution. Then, the equation \eqref{SDE-coup-eq-1} also has a unique strong solution, denoted by $(X_t,Y_t)_{t\ge0}$, and the associated generator is just the coupling operator $\widetilde L$ given by \eqref{cp1}. In particular, $(X_t,Y_t)_{t\ge0}$ is a Markov coupling process for the unique strong solution to the SDE \eqref{s1}, and $X_t=Y_t$ for all $t\ge T$, where $T$ is the coupling time of $(X_t)_{t\ge0}$ and
$(Y_t)_{t\ge0}$, i.e.,\
  $T=\inf\{t\ge0: X_t=Y_t\}.$ \end{proposition}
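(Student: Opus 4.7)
The plan is to construct $(X_t,Y_t)$ pathwise by an interlacing argument, then read off the generator via It\^o's formula, and finally deduce the coupling-time property from pathwise uniqueness of \eqref{s1}. The crucial structural input is Lemma~\ref{C:mea}(2): the measures $\mu_\Psi$ and $\mu_{\Psi^{-1}}$ are finite on $\R^d$ with total masses bounded by a constant $c(c_0,\varepsilon_0)$ that depends only on the non-degeneracy of $\sigma$ and on $\nu$. Consequently the intensity of events at which the driver $Z^\ast_t$ is forced to differ from $Z_t$ is uniformly bounded in $(X_{t-},Y_{t-})$ by $\tfrac{1}{2}(\mu_\Psi(\R^d)+\mu_{\Psi^{-1}}(\R^d))\le c(c_0,\varepsilon_0)$, so the ``coupling jumps'' form a locally finite sequence.

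Concretely, I would fix a realization of the extended Poisson random measure $N(ds,dz,du)$ on $\R_+\times\R^d\times[0,1]$ and set $(X_0,Y_0)=(x,y)$. Define successive coupling jump times $\tau_1<\tau_2<\cdots$ as the atoms of $N$ for which $u\le\tfrac{1}{2}[\rho_\Psi(X_{s-},Y_{s-},z)+\rho_{\Psi^{-1}}(X_{s-},Y_{s-},z)]$. Between consecutive $\tau_n$'s \eqref{coup-SDE-2} forces $dZ^\ast_t=dZ_t$, so both marginals satisfy the original SDE \eqref{s1} with a common driving L\'evy process, and the assumed unique strong solvability of \eqref{s1} yields the joint trajectory on $[\tau_n,\tau_{n+1})$. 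At each $\tau_{n+1}$ the value $Y_{\tau_{n+1}}$ is updated by the prescribed rule, $\sigma(Y_{\tau_{n+1}-})\Psi(z)$ or $\sigma(Y_{\tau_{n+1}-})\Psi^{-1}(z)$, depending on the sub-interval of $[0,1]$ containing the marker $u$. Because the rate of the $\tau_n$'s is uniformly bounded above, they cannot accumulate, so the construction extends to all of $[0,\infty)$ and both existence and pathwise uniqueness are inherited from the excursions.

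To identify the generator with $\widetilde L$, I would apply It\^o's formula for Poisson random integrals to $h(X_t,Y_t)$ with $h\in C_b^2(\R^{2d})$. Decomposing $\bar N$ according to the three indicator regions of $u$ and rewriting the intensity via $\mu_\Psi(dz)=\rho_\Psi(\cdot,\cdot,z)\,\nu(dz)$ and the analogous identity for $\mu_{\Psi^{-1}}$ recovers, line by line, the three integral terms in \eqref{cp1}; the drift part $\langle\nabla_xh,b(x)\rangle+\langle\nabla_yh,b(y)\rangle$ is immediate from the $b$-terms in \eqref{SDE-coup-eq-1}. The coupling-time property is then transparent: if $X_t=Y_t$, then $(X_t-Y_t)_\kappa=0$ and $\sigma(X_{t-})=\sigma(Y_{t-})$, so $\Psi$ reduces to the identity, the first two regions in \eqref{coup-SDE-2} yield the same jump as the third, and $X$ and $Y$ obey the same SDE driven by the same $Z$ after $T$; pathwise uniqueness of \eqref{s1} gives $X_s=Y_s$ for all $s\ge T$.

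The step I expect to be the main obstacle is the bookkeeping of small-jump compensators. The Poisson measure $\bar N$ is compensated using $\I_{\{|z|\le 1\}}$, whereas the generator \eqref{cp1} expects compensators using $\I_{\{|\Psi(z)|\le 1\}}$ and $\I_{\{|\Psi^{-1}(z)|\le 1\}}$ on the mismatched events. The ``deterministic'' $du\,\nu(dz)\,dt$ correction term at the end of \eqref{coup-SDE-2} is precisely designed to absorb this discrepancy, and keeping it aligned correctly throughout the It\^o calculation, while simultaneously exploiting Lemma~\ref{C:mea}(1) to perform the transformations $\mu_\Psi\Psi^{-1}\mapsto\mu_\Psi$ and $\mu_{\Psi^{-1}}\Psi\mapsto\mu_{\Psi^{-1}}$ (as in the proof of Theorem~\ref{t:coup}), is where most of the technical care is needed.
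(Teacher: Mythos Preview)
Your proposal is correct and follows essentially the same approach as the paper: an interlacing construction exploiting the finite total mass of $\mu_\Psi$ and $\mu_{\Psi^{-1}}$ from Lemma~\ref{C:mea}(2), It\^o's formula to identify the generator, and pathwise uniqueness of \eqref{s1} for the coupling-time property. The only difference is ordering: the paper first simplifies \eqref{coup-SDE-2} to $dZ^\ast_t=dZ_t+dG^\ast_t$ with $G^\ast$ a finite-activity integral against the \emph{uncompensated} measure $N$ (using Lemma~\ref{C:mea}(1) to make the deterministic drift terms cancel exactly), and only then runs the interlacing; you defer this cancellation to the generator step, which is fine, but note that your assertion ``between consecutive $\tau_n$'s, $dZ^\ast_t=dZ_t$'' already presupposes that same Lemma~\ref{C:mea}(1) computation---without it the compensator of $\bar N$ and the explicit correction term in \eqref{coup-SDE-2} would leave a residual drift in $Y$ between coupling jumps, and you could not invoke unique solvability of \eqref{s1} for the excursions.
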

\begin{proof}(1) We first simplify the formula \eqref{coup-SDE-2} for $Z^\ast$. We write \eqref{coup-SDE-2} as
\begin{align*}
  d Z^\ast_t&=\int_{\R^d\times[0,1]}z\,\bar{N}(dt,dz,du)\\
  &\quad+ \int_{\R^d\times [0,1]} \Big[(\Psi(z)-z) \I_{\{u\le \frac12 \rho_{\Psi}(X_{t-},Y_{t-},z)\}} \\
  &\quad\,\,\,\,+(\Psi^{-1}(z)-z) \I_{\{\frac12 \rho_{\Psi}(X_{t-},Y_{t-},z)< u\le \frac12 [\rho_{\Psi}(X_{t-},Y_{t-},z)+\rho_{\Psi^{-1}}(X_{t-},Y_{t-},z)]\}}\Big]  \bar{N}(dt,dz,du)\\
  &\quad - \int_{\R^d\times [0,1]} \!
  \Big[ \Psi(z)\! \Big(\I_{\{|\Psi(z)|\le 1\}} \! -\!\I_{\{|z|\le 1\}}\Big)\! \I_{\{u\le \frac12 \rho_{\Psi}(X_{t-},Y_{t-},z)\}}\\
  &\qquad \qquad \quad\,\,\,\,+\Psi^{-1}(z)\Big(\I_{\{|\Psi^{-1}(z)\le 1\}} -\I_{\{|z|\le 1\}}\Big)\\
  &\qquad \qquad \qquad\,\,\,\, \times \I_{\{\frac12 \rho_{\Psi}(X_{t-},Y_{t-},z)< u\le
  \frac12 [\rho_{\Psi}(X_{t-},Y_{t-},z)+\rho_{\Psi^{-1}}(X_{t-},Y_{t-},z)]\}}\Big] \,\nu(dz)\,du\,dt.
  \end{align*}

  According to Lemma \ref{C:mea}(1),
  \begin{align*}
   &\int_{\R^d\times [0,1]} \!
  \Big[ \Psi(z)\! \Big(\I_{\{|\Psi(z)|\le 1\}} \! -\!\I_{\{|z|\le 1\}}\Big)\! \I_{\{u\le \frac12 \rho_{\Psi}(X_{t-},Y_{t-},z)\}}\\
  &\qquad \qquad \quad\,\,\,\,+\Psi^{-1}(z)\Big(\I_{\{|\Psi^{-1}(z)\le 1\}} -\I_{\{|z|\le 1\}}\Big)\\
  &\qquad \qquad  \times \I_{\{\frac12 \rho_{\Psi}(X_{t-},Y_{t-},z)< u\le
  \frac12 [\rho_{\Psi}(X_{t-},Y_{t-},z)+\rho_{\Psi^{-1}}(X_{t-},Y_{t-},z)]\}}\Big] \,\nu(dz)\,du\\
  &=\frac{1}{2}\int_{\R^d} \Psi(z)\Big(\I_{\{|\Psi(z)|\le 1\}} \! -\!\I_{\{|z|\le 1\}}\Big)\,\mu_\Psi(dz)\\
  &\quad+\frac{1}{2}\int_{\R^d} \Psi^{-1}(z)\Big(\I_{\{|\Psi^{-1}(z)|\le 1\}} \! -\!\I_{\{|z|\le 1\}}\Big)\,\mu_{\Psi^{-1}}(dz)\\
  &=\frac{1}{2}\int_{\R^d}(z-\Psi^{-1}(z))\Big(\!\I_{\{|z|\le 1\}}-\I_{\{|\Psi^{-1}(z)|\le 1\}}\Big)\,\mu_{\Psi^{-1}}(dz)
  \end{align*} and
  \begin{align*}
    &\frac{1}{2}\int_{\R^d} (\Psi(z)-z)\I_{\{|z|\le1\}}\,\mu_\Psi(dz)=\frac{1}{2}\int_{\R^d} (z-\Psi^{-1}(z))\I_{\{|\Psi^{-1}(z)|\le1\}}\,\mu_{\Psi^{-1}}(dz).
  \end{align*}

  Combining all the equalities above together yields that
  \begin{align*}
  d Z^\ast_t&=\int_{\R^d\times[0,1]}z\,\bar{N}(dt,dz,du)\\
  &\quad+ \int_{\R^d\times [0,1]} \Big[(\Psi(z)-z) \I_{\{u\le \frac12 \rho_{\Psi}(X_{t-},Y_{t-},z)\}} \\
  &\quad\,\,+(\Psi^{-1}(z)-z) \I_{\{\frac12 \rho_{\Psi}(X_{t-},Y_{t-},z)< u\le \frac12 [\rho_{\Psi}(X_{t-},Y_{t-},z)+\rho_{\Psi^{-1}}(X_{t-},Y_{t-},z)]\}}\Big] {N}(dt,dz,du)\\
  &=:dZ_t+dG_t^*.
  \end{align*} Again, by Lemma \ref{C:mea}(2), $\mu_\Psi$ and $\mu_{\Psi^{-1}}$ are finite measures on $(\R^d,\mathscr{B}(\R^d))$, and so all the integrals above are well defined.
  In particular, we can rewrite \eqref{SDE-coup-eq-1} as follows
    \begin{equation}\label{SDE-coup-eq-444}
  \begin{cases}
  dX_t=b(X_t)\, dt+\sigma(X_{t-})\,dZ_t,& X_0=x;\\
  dY_t= b(Y_t)\,dt+ \sigma(Y_{t-})\,dZ_t+ \sigma(Y_{t-})\,dG^*_t, & Y_0=y.
  \end{cases}
  \end{equation}

 (2) We next follow the idea for the argument of \cite[Proposition 2.2]{Lwcw} and show that the SDE \eqref{SDE-coup-eq-444} has a unique strong solution. By assumption, the equation \eqref{s1} (i.e., the first equation in \eqref{SDE-coup-eq-444}) has a non-explosive and pathwise unique strong solution $(X_t)_{t\geq 0}$. We show that the sample paths of $(Y_t)_{t\geq 0}$ can be obtained by repeatedly modifying those of the solution of the following equation:
  \begin{equation}\label{2-prop-1.1}
  d \tilde Y_t=b(\tilde Y_t)\,dt+ \sigma(\tilde Y_{t-})dZ_t,\quad \tilde Y_0=y.
  \end{equation}

Denote by $(Y^{(1)}_t)_{t\ge0}$ the solution to \eqref{2-prop-1.1}. Take a uniformly distributed random variable $\zeta_1$ on $[0,1]$, and define the stopping times $T_1=\inf\big\{t>0: X_t=Y^{(1)}_t \big\}$ and
  \begin{align*}
  \sigma_1=\inf\bigg\{t>0: &\, \zeta_1\leq \frac12\Big(\rho_{\Psi}(X_{t},Y^{(1)}_{t},\Delta Z_t)+\rho_{\Psi^{-1}}(X_{t},Y^{(1)}_{t},\Delta Z_t)\Big)\bigg\}.
  \end{align*}
We consider two cases:
\begin{itemize}
\item[(i)] On the event $\{T_1\leq \sigma_1\}$, we set $Y_t=Y^{(1)}_t$ for all $t< T_1$; moreover, by the pathwise uniqueness of the equation \eqref{s1}, we can define $Y_t=X_t$ for $t\geq T_1$.
\item[(ii)] On the event $\{T_1> \sigma_1\}$, we define $Y_t=Y^{(1)}_t$ for all $t< \sigma_1$ and
  \begin{align*}Y_{\sigma_1}=&Y^{(1)}_{\sigma_1-}+ \sigma(Y^{(1)}_{\sigma_1-}) \Delta Z_{\sigma_1} \\
  &+ \begin{cases}
 \sigma(Y^{(1)}_{\sigma_1-})(\Psi(\Delta Z_{\sigma_1})-\Delta Z_{\sigma_1}), & \mbox{if } \zeta_1\leq \frac12 \rho_\Psi\big(X_{\sigma_1 -}, Y^{(1)}_{\sigma_1 -},\Delta Z_{\sigma_1} \big); \\
  \sigma(Y^{(1)}_{\sigma_1-})( \Psi^{-1}(\Delta Z_{\sigma_1})-\Delta Z_{\sigma_1}), & \mbox{if } \zeta_1> \frac12 \rho_\Psi\big(X_{\sigma_1 -}, Y^{(1)}_{\sigma_1 -},\Delta Z_{\sigma_1} \big).
  \end{cases}\end{align*}
\end{itemize}

Next, we restrict on the event $\{T_1> \sigma_1\}$ and consider the SDE \eqref{2-prop-1.1} with $t>\sigma_1$ and $\tilde Y_{\sigma_1}=Y_{\sigma_1}$. Denote its solution by $(Y^{(2)}_t)_{t\ge0}$. Similarly, we take another uniformly distributed random variable $\zeta_2$ on $[0,1]$, and define $T_2=\inf\big\{t>\sigma_1: X_t=Y^{(2)}_t\big\}$ and
  \begin{align*}
  \sigma_2=\inf\bigg\{t>\sigma_1: &\, \zeta_2\leq \frac12\Big(\rho_{\Psi}(X_{t},Y^{(2)}_{t},\Delta Z_t)+\rho_{\Psi^{-1}}(X_{t},Y^{(2)}_{t},\Delta Z_t)\Big)\bigg\}.
  \end{align*}
In the same way, we can define the process $(Y_t)_{t\ge0}$ till $t\leq \sigma_2$. We repeat this procedure and note that, thanks to the fact that $\mu_\Psi$ and $\mu_{\Psi^{-1}}$ are finite measures on $(\R^d,\mathscr{B}(\R^d))$ (by Lemma \ref{C:mea}(2)), only finite many modifications have to be made in any finite interval of time. Finally, we obtain the sample paths $(Y_t)_{t\geq 0}$.

(3) Denote by $(X_t,Y_t)_{t\ge0}$ the unique strong solution to \eqref{SDE-coup-eq-1}, and by $\bar{L}$ the associated Markov generator.
According to the It\^{o} formula, for any $f\in C_b^2(\R^{2d})$,
$\bar{L}f(x,y)$ enjoys the same formula as \eqref{cp1}; that is, the generator of the process $(X_t,Y_t)_{t\ge0}$ is just the coupling operator $\widetilde{L}$ defined by \eqref{cp1}. Thus, $(X_t,Y_t)_{t\ge0}$ is a Markov coupling of the process $X$ determined by the SDE \eqref{s1}.

When $X_{t-}=Y_{t-}$, $\Psi(z)=z$ and so $dZ^*_t=dZ_t$. Thus, by the Markov property of the process  $(X_t,Y_t)_{t\ge0}$ and the pathwise uniqueness of the SDE \eqref{s1}, $X_t=Y_t$ for any $t>T$, where $T$ is the coupling time of the process  $(X_t,Y_t)_{t\ge0}$.
\end{proof}

\section{Preliminary estimates on coupling operator}\label{Section3}
Let $ \widetilde{L} $ be the coupling operator defined by
\eqref{cp1}, where $\Psi$ is given by \eqref{e:psi}. Let $f\in
C([0,\infty))\cap C_b^2((0,\infty))$ such that $f(0)=0$, $f\ge0$, $f'\ge0$ and $f''\le 0$ on $(0,\infty)$. We
will give some estimates on $\widetilde{L}f(|x-y|)$.

According to \eqref{cp1}, we know that for any $f\in
C([0,\infty))\cap C_b^2((0,\infty))$ and any $x,y\in \R^d$ with $x\neq y$,
\begin{align*}
\widetilde{L} f(|x-y|)=&\frac{f'(|x-y|)}{|x-y|}\<
b(x)-b(y),x-y\>\\
&+\frac{1}{2}\int\Big(f\big(|(x+\sigma(x)z)-(y+\sigma(y)\Psi(z))|\big)-f(|x-y|)\\
    &\qquad\qquad -\frac{f'(|x-y|)}{|x-y|}\big\langle x-y,\sigma(x)z \big\rangle \I_{\{|z|\le 1\}}\\
    &\qquad\qquad + \frac{f'(|x-y|)}{|x-y|} \big\langle x-y,\sigma(y)\Psi(z) \big\rangle \I_{\{|\Psi(z)|\le 1\}}\Big)\mu_{\Psi}(dz)\\
    &+\frac{1}{2}\int\Big(f\big(|(x+\sigma(x)z)-(y+\sigma(y)\Psi^{-1}(z))|\big)-f(|x-y|)\\
    &\qquad\qquad -\frac{f'(|x-y|)}{|x-y|}\big\langle x-y,\sigma(x)z \big\rangle \I_{\{|z|\le 1\}}\\
    &\qquad\qquad+ \frac{f'(|x-y|)}{|x-y|} \big\langle x-y,\sigma(y)\Psi^{-1}(z) \big\rangle \I_{\{|\Psi^{-1}(z)|\le 1\}}\Big)\mu_{\Psi^{-1}}(dz)\\
    &+\int\Big(f\big(|(x+\sigma(x)z)-(y+\sigma(y)z)|\big)-f(|x-y|)\\
    &\qquad\quad- \frac{f'(|x-y|)}{|x-y|} \big\langle x-y,\sigma(x)z \big\rangle \I_{\{|z|\le 1\}}\\
    &\qquad\quad +\frac{f'(|x-y|)}{|x-y|}\big\langle x-y,\sigma(y)z \big\rangle \I_{\{|z|\le 1\}}\Big)\Big(\nu-\frac{1}{2}\mu_{\Psi}-\frac{1}{2}\mu_{\Psi^{-1}}\Big)(dz).
\end{align*}
By Lemma \ref{C:mea},
\begin{align*}
&\int\frac{f'(|x-y|)}{|x-y|} \big\langle x-y,\sigma(y)\Psi(z) \big\rangle \I_{\{|\Psi(z)|\le 1\}}\mu_{\Psi}(dz)\\
&=\int\frac{f'(|x-y|)}{|x-y|} \big\langle x-y,\sigma(y)\Psi(z) \big\rangle \I_{\{|\Psi(z)|\le 1\}}(\mu_{\Psi^{-1}}\Psi)(dz)\\
&=\int\frac{f'(|x-y|)}{|x-y|}\big\langle x-y,\sigma(y)z \big\rangle
\I_{\{|z|\le 1\}}\mu_{\Psi^{-1}}(dz),
 \end{align*}
 where we note that all the integrals above are well defined since both $\mu_{\Psi}$ and $\mu_{\Psi^{-1}}$ are finite measures, thanks to Lemma \ref{C:mea}(2).
Similarly, it also holds that
\begin{align*}
&\int\frac{f'(|x-y|)}{|x-y|} \big\langle x-y,\sigma(y)\Psi^{-1}(z) \big\rangle \I_{\{|\Psi^{-1}(z)|\le 1\}}\mu_{\Psi^{-1}}(dz)\\
&=\int\frac{f'(|x-y|)}{|x-y|}\big\langle x-y,\sigma(y)z \big\rangle
\I_{\{|z|\le 1\}}\mu_{\Psi}(dz).
 \end{align*}
Therefore, we arrive at for any $f\in
C([0,\infty))\cap C_b^2((0,\infty))$ and any $x,y\in \R^d$ with $x\neq y$,
\begin{align}\label{cpwd}\aligned
  &\widetilde L f(|x-y|)\\
   &= \frac{f'(|x-y|)}{|x-y|}\< b(x)-b(y),x-y\>\\
   &\quad-\frac{f'(|x-y|)}{2|x-y|}\Big\langle (\sigma(x)-\sigma(y))\int_{\{|z|\le 1\}}z(\mu_{\Psi}+\mu_{\Psi^{-1}})(dz),x-y \Big\rangle \\
    &\quad +\frac{1}{2}\int\Big(f\big(|(x+\sigma(x)z)-(y+\sigma(y)\Psi(z))|\big)-f(|x-y|)\Big)\mu_{\Psi}(dz)\\
   &\quad  +\frac{1}{2}\int\Big(f\big(|(x+\sigma(x)z)-(y+\sigma(y)\Psi^{-1}(z))|\big)-f(|x-y|)\Big)\mu_{\Psi^{-1}}(dz)\\
   &\quad  +\int\!\!\Big(f\big(|(x+\sigma(x)z)-(y+\sigma(y)z)|\big)-f(|x-y|)\\
   &\qquad\quad\,\,-\!\frac{f'(|x-y|)}{|x-y|}\big\langle x\!-\!y,(\sigma(x)\!-\!\sigma(y))z \big\rangle \I_{\{|z|\le 1\}}\Big)\!\Big(\nu\!-\!\frac{1}{2}\mu_{\Psi}\!-\!\frac{1}{2}\mu_{\Psi^{-1}}\Big)(dz)\\
    &=:I_1+I_2+I_3+I_4+I_5.\endaligned
  \end{align}

\begin{remark} We note that \eqref{cpwd} also can be directly deduced from \eqref{SDE-coup-eq-1}. Indeed, by \eqref{SDE-coup-eq-1} and \eqref{coup-SDE-2}, we have
\begin{align*}&d(X_t-Y_t)\\
&=(b(X_t)-b(Y_t))\,dt\\
&\quad+\int_{\R^d\times[0,1]} (\sigma(X_{t-})z-\sigma(Y_{t-})\Psi(z))\I_{\{u\le \frac12 \rho_{\Psi}(X_{t-},Y_{t-},z)\}}\,\bar{N}(dt,dz,du)\\
&\quad+\int_{\R^d\times[0,1]} (\sigma(X_{t-})z-\sigma(Y_{t-})\Psi^{-1}(z))\\
&\qquad\quad\times \I_{\{\frac12 \rho_{\Psi}(X_{t-},Y_{t-},z)< u\le
  \frac12 [\rho_{\Psi}(X_{t-},Y_{t-},z)+\rho_{\Psi^{-1}}(X_{t-},Y_{t-},z)]\}}\,\bar{N}(dt,dz,du)\\
&\quad+\int_{\R^d\times[0,1]} (\sigma(X_{t-})z-\sigma(Y_{t-})z)\I_{\{\frac12 [\rho_{\Psi}(X_{t-},Y_{t-},z)+\rho_{\Psi^{-1}}(X_{t-},Y_{t-},z)]<u\le 1\}}\,\bar{N}(dt,dz,du)\\
&\quad+ \int_{\R^d\times [0,1]} \!
  \Big[\sigma(Y_t) \Psi(z)\! \Big(\I_{\{|\Psi(z)|\le 1\}} \! -\!\I_{\{|z|\le 1\}}\Big)\! \I_{\{u\le \frac12 \rho_{\Psi}(X_{t-},Y_{t-},z)\}}\\
  &\qquad \qquad \quad\,\,\,\,+\sigma(Y_t)\Psi^{-1}(z)\Big(\I_{\{|\Psi^{-1}(z)\le 1\}} -\I_{\{|z|\le 1\}}\Big)\\
  &\qquad \qquad  \times \I_{\{\frac12 \rho_{\Psi}(X_{t-},Y_{t-},z)< u\le
  \frac12 [\rho_{\Psi}(X_{t-},Y_{t-},z)+\rho_{\Psi^{-1}}(X_{t-},Y_{t-},z)]\}}\Big] \,\nu(dz)\,du\,dt, \end{align*} where $\rho_\Psi(x,y,z)$ and $\rho_{\Psi^{-1}}(x,y,z)$ are given in \eqref{e:llle}. Then, by the It\^{o} formula, for any $f\in
C([0,\infty))\cap C_b^2((0,\infty))$ and any $x,y\in \R^d$ with $x\neq y$,
\begin{align*}
&\widetilde{L} f(|x-y|)\\
&=\frac{f'(|x-y|)}{|x-y|}\< b(x)-b(y),x-y\>\\
&\quad+\frac{1}{2}\int\Big(f(|x-y+\sigma(x)z-\sigma(y)\Psi(z)|)-f(|x-y|)\\
&\qquad\qquad\quad-\frac{f'(|x-y|)}{|x-y|}\langle x-y, \sigma(x)z-\sigma(y)\Psi(z)\rangle\I_{\{|z|\le 1\}}\Big)\,\mu_\Psi(dz)\\
&\quad+\frac{1}{2}\int\Big(f(|x-y+\sigma(x)z-\sigma(y)\Psi^{-1}(z)|)-f(|x-y|)\\
&\qquad\qquad\quad-\frac{f'(|x-y|)}{|x-y|}\langle x-y, \sigma(x)z-\sigma(y)\Psi^{-1}(z)\rangle\I_{\{|z|\le 1\}}\Big)\,\mu_{\Psi^{-1}}(dz)\\
&\quad+\int\Big(f(|x-y+\sigma(x)z-\sigma(y)z|)-f(|x-y|)\\
&\qquad\qquad-\frac{f'(|x-y|)}{|x-y|}\langle x-y, \sigma(x)z-\sigma(y)z\rangle\I_{\{|z|\le 1\}}\Big)\,\Big(\nu-\frac{1}{2}\mu_\Psi- \frac{1}{2}\mu_{\Psi^{-1}}\Big)(dz)\\
&\quad+\frac{f'(|x-y|)}{|x-y|}\int\langle x-y, \sigma(y)\Psi(z)\rangle\Big(\I_{\{|\Psi(z)|\le 1\}}-\I_{\{|z|\le 1\}}\Big)\,\mu_\Psi(dz)\\
&\quad+\frac{f'(|x-y|)}{|x-y|}\int\langle x-y, \sigma(y)\Psi^{-1}(z)\rangle\Big(\I_{\{|\Psi^{-1}(z)|\le 1\}}-\I_{\{|z|\le 1\}}\Big)\,\mu_{\Psi^{-1}}(dz)\\
&=\frac{f'(|x-y|)}{|x-y|}\< b(x)-b(y),x-y\>\\
&\quad+\frac{1}{2}\int\Big(f(|x-y+\sigma(x)z-\sigma(y)\Psi(z)|)-f(|x-y|)\Big)\,\mu_\Psi(dz)\\
&\quad+\frac{1}{2}\int\Big(f(|x-y+\sigma(x)z-\sigma(y)\Psi^{-1}(z)|)-f(|x-y|)\Big)\,\mu_{\Psi^{-1}}(dz)\\
&\quad+\int\Big(f(|x-y+\sigma(x)z-\sigma(y)z|)-f(|x-y|)\\
&\qquad\qquad-\frac{f'(|x-y|)}{|x-y|}\langle x-y, \sigma(x)z-\sigma(y)z\rangle\I_{\{|z|\le 1\}}\Big)\,\Big(\nu-\frac{1}{2}\mu_\Psi- \frac{1}{2}\mu_{\Psi^{-1}}\Big)(dz)\\
&\quad-\frac{f'(|x-y|)}{|x-y|}\int\langle x-y, \sigma(x)z\I_{\{|z|\le 1\}}-\sigma(y)\Psi(z)\I_{\{|\Psi(z)|\le 1\}}\rangle\Big)\,\mu_\Psi(dz)\\
&\quad-\frac{f'(|x-y|)}{|x-y|}\int\langle x-y, \sigma(x)z\I_{\{|z|\le 1\}}-\sigma(y)\Psi^{-1}(z)\I_{\{|\Psi^{-1}(z)|\le 1\}}\Big)\,\mu_{\Psi^{-1}}(dz). \end{align*} According to Lemma \ref{C:mea}, we know that
$$\int\langle x-y, \sigma(y)\Psi(z)\rangle\I_{\{|\Psi(z)|\le 1\}}\,\mu_\Psi(dz)=\int\langle x-y, \sigma(y)z\rangle\I_{\{|z|\le 1\}}\,\mu_{\Psi^{-1}}(dz)$$ and
$$\int\langle x-y, \sigma(y)\Psi^{-1}(z)\rangle\I_{\{|\Psi^{-1}(z)|\le 1\}}\,\mu_{\Psi^{-1}}(dz)=\int\langle x-y, \sigma(y)z\rangle\I_{\{|z|\le 1\}}\,\mu_{\Psi}(dz).$$ Hence, \eqref{cpwd} follows from all the equalities above.
  \end{remark}

Next, we assume that $f\in C([0,\infty))\cap C_b^2((0,\infty))$ such
that $f(0)=0$, $f\ge0$, $f'\ge0$ and $f''\le 0$ on $(0,\infty)$, and
will compute $I_i$ $(i=2,\cdots,5)$ in \eqref{cpwd} respectively.
Without loss of generality, under assumptions on $\sigma(x)$, in the
following we can assume that $$\Lambda^{-1}\le \inf_{x\in
\R^d}\left\{\|\sigma(x)\|_{{\rm H.S.}}\vee \|\sigma(x)^{-1}\|_{{\rm
H.S.}}\right\}\le  \sup_{x\in \R^d}\left\{\|\sigma(x)\|_{{\rm
H.S.}}\vee \|\sigma(x)^{-1}\|_{{\rm H.S.}}\right\}\le \Lambda.$$

(i) It is clear that
$$I_2\le \frac{1}{2}{f'(|x-y|)}\|\sigma(x)-\sigma(y)\|_{\mathrm{H.S.}}\int_{\{|z|\le 1\}}|z|(\mu_{\Psi}+\mu_{\Psi^{-1}})(dz).$$

(ii) By the definition of $\Psi$ in \eqref{e:psi}, we have
\begin{align*}
 I_3&=\frac{1}{2}\int\!\!\!\Big(f\big(|(x+\sigma(x)z)\!-\!\!(y+\sigma(y)\sigma(y)^{-1}(\sigma(x)z\!+\!(x-y)_\kappa))|\big)\!-\!f(|x-y|)\!\Big)\,\mu_{\Psi}(dz)\\
  &=\frac{1}{2}\int\Big(f\big(|x-y|-|x-y|\wedge\kappa\big)-f(|x-y|)\Big)\mu_{\Psi}(dz)\\
 &=\frac{1}{2}\mu_{\Psi}(\R ^d)\Big(f\big(|x-y|-|x-y|\wedge\kappa\big)-f(|x-y|)
 \Big),
  \end{align*} where in the last equality we used again the fact
  that $\mu_{\Psi}$ is a finite measure.

(iii) For any $R\in[1,\infty]$,
\begin{align*}
 I_4&=\frac{1}{2}\!\int\!\!\!\Big(f\!\big(|(x+\sigma(x)z)\!-\!(y+\sigma(y)\sigma(x)^{-1}(\sigma(y)z\!-\!(x-y)_\kappa))|\big)\!\!-\!\!f(|x\!-\!y|)\Big)\mu_{\Psi^{-1} }(dz)\\
 &=\frac{1}{2}\int\Big(f\big(|x-y|+|x-y|\wedge\kappa\big)-f(|x-y|)\Big)\mu_{\Psi^{-1}}(dz)\\
  &\quad +\frac{1}{2}\int\Big(f\big(|(x-y)+\sigma(y)\sigma(x)^{-1}(x-y)_\kappa+(\sigma(x)-\sigma(y)\sigma(x)^{-1}\sigma(y))z|\big)\\
 &\qquad \qquad\quad-f\big(|x-y|+|x-y|\wedge\kappa\big)\Big)\mu_{\Psi^{-1}}(dz)\\
 &=\frac{1}{2}\mu_{\Psi}(\R ^d)\Big(f\big(|x-y|+|x-y|\wedge\kappa\big)-f(|x-y|) \Big)\\
 &\quad +\frac{1}{2}\int_{\{|z|\leqslant R\}}\Big(f\big(|(x-y)+\sigma(y)\sigma(x)^{-1}(x-y)_\kappa+(\sigma(x)-\sigma(y)\sigma(x)^{-1}\sigma(y))z|\big)\\
 &\qquad \qquad\qquad\quad-f\big(|x-y|+|x-y|\wedge\kappa\big)\Big)\mu_{\Psi^{-1} }(dz)\\
 &\quad +\frac{1}{2}\int_{\{|z|> R\}}\Big(f\big(|(x-y)+\sigma(y)\sigma(x)^{-1}(x-y)_\kappa+(\sigma(x)-\sigma(y)\sigma(x)^{-1}\sigma(y))z|\big)\\
 &\qquad \qquad\qquad\quad-f\big(|x-y|+|x-y|\wedge\kappa\big)\Big)\mu_{\Psi^{-1} }(dz)\\
 &=:I_{4,1}+I_{4,2,R}+I_{4,3,R},
\end{align*} where in the third equality we used the fact that
$\mu_{\Psi}(\R^d)=\mu_{\Psi^{-1}}(\R^d)$, due to Lemma
\ref{C:mea}(1). By the elementary inequality
\begin{equation}\label{e:i1}f(a)-f(b)\leqslant f'(b)(a-b),\quad a\ge0,b>0,\end{equation} and the
fact that $f''\le0$ on $(0,\infty)$, we have
\begin{align*}
I_{4,2,R} &\le \frac{1}{2}f'(|x-y|+|x-y|\wedge\kappa)\int_{\{|z|\le
R\}}(|(\sigma(x)-\sigma(y)\sigma(x)^{-1}\sigma(y))z|\\
&\qquad
\qquad\qquad\qquad\qquad\qquad\qquad\quad\,\,\,\,+|(\sigma(y)\sigma(x)^{-1}\!-\!\mathrm{I}_{d\times
d})(x\!-\!y)_\kappa|)\,\mu_{\Psi^{-1}}(dz)\\
&\leqslant\frac{1}{2}f'(|x-y|)\|\sigma(x)-\sigma(y)\sigma(x)^{-1}\sigma(y)\|_{\mathrm{H.S.}}\int_{\{|z|\leqslant R\}}|z|\,\mu_{\Psi^{-1}}(dz)\\
 &\quad+\frac{1}{2}f'(|x-y|)\mu_{\Psi^{-1}}(\{z\in \R^d: |z|\leqslant R\})\|\sigma(y)\sigma(x)^{-1}-\mathrm{I}_{d\times d}\|_{\mathrm{H.S.}}(|x-y|\wedge\kappa)\\
 &\leqslant\frac{1}{2}f'(|x-y|)\big(1+\|\sigma(y)\sigma(x)^{-1}\|_{\mathrm{H.S.}}\big)\|\sigma(x)-\sigma(y)\|_{\mathrm{H.S.}}\int_{\{|z|\leqslant R\}}|z|\,
 \mu_{\Psi^{-1}}(dz)\\
 &\quad+\frac{1}{2}f'(|x-y|)\mu_{\Psi^{-1}}(\R^d)\|\sigma(x)-\sigma(y)\|_{\mathrm{H.S.}}\|\sigma(x)^{-1}\|_{\mathrm{H.S.}}(|x-y|\wedge\kappa),
\end{align*}
where in the last inequality we have used the facts that
\begin{equation}\label{I42inequ1}\begin{split}
\|\sigma(x)-\sigma(y)\sigma(x)^{-1}\sigma(y)\|_{\mathrm{H.S.}} &\le
\|\sigma(x)-\sigma(y)\|_{\mathrm{H.S.}}\\
&\quad+\|\sigma(y)-\sigma(y)\sigma(x)^{-1}\sigma(y)\|_{\mathrm{H.S.}}\\
&\le\|\sigma(x)-\sigma(y)\|_{\mathrm{H.S.}}\\
&\quad+\|\sigma(y)\sigma(x)^{-1}\|_{\mathrm{H.S.}}\|\sigma(x)-\sigma(y)\|_{\mathrm{H.S.}}\\
&=\big(1+\|\sigma(y)\sigma(x)^{-1}\|_{\mathrm{H.S.}}\big)\|\sigma(x)-\sigma(y)\|_{\mathrm{H.S.}}.
\end{split}\end{equation} and \begin{equation}\label{I42inequ2}\|\sigma(y)\sigma(x)^{-1}-\mathrm{I}_{d\times
d}\|_{\mathrm{H.S.}}\le
\|\sigma(x)-\sigma(y)\|_{\mathrm{H.S.}}\|\sigma(x)^{-1}\|_{\mathrm{H.S.}}.\end{equation}
On the other hand, since for all $a,b\ge0$, by $f''\le 0$ on $(0,\infty)$ and
$f(0)=0$,
\begin{equation}\label{e:i2}
f(a+b)-f(a)=\int_a^{a+b}f'(s)\,ds=\int_0^{b}f'(a+s)\,ds\leqslant\int_0^{b}f'(s)\,ds=f(b),\end{equation}
we have
\begin{align*}
I_{4,3,R}&\leqslant\frac{1}{2}\int_{\{|z|> R\}}\Big(f\big(|(\sigma(x)-\sigma(y)\sigma(x)^{-1}\sigma(y))z|\big)\\
&\qquad\qquad\qquad
+f\big(|x-y|+|\sigma(y)\sigma(x)^{-1}(x-y)_\kappa|\big)\\
 &\qquad\qquad\qquad
 -f\big(|x-y|+|x-y|\wedge\kappa\big)\Big)\,\mu_{\Psi^{-1}}(dz)\\
 &\leqslant\frac{1}{2}\int_{\{|z|> R\}}f\Big(\big(1+\|\sigma(y)\sigma(x)^{-1}\|_{\mathrm{H.S.}}\big)\|\sigma(x)-\sigma(y)\|_{\mathrm{H.S.}}|z|\Big)\mu_{\Psi^{-1}}(dz)\\
&\quad+\frac{1}{2}f'(|x-y|+|x-y|\wedge\kappa)|(\sigma(y)\sigma(x)^{-1}-\mathrm{I}_{d\times
d})(x-y)_\kappa|\\
&\qquad\times\mu_{\Psi^{-1}}\{z\in \R^d: |z|>
R\})\\
&\leqslant\frac{1}{2}\int_{\{|z|> R\}}f\Big(\big(1+\|\sigma(y)\sigma(x)^{-1}\|_{\mathrm{H.S.}}\big)\|\sigma(x)-\sigma(y)\|_{\mathrm{H.S.}}|z|\Big)\mu_{\Psi^{-1}}(dz)\\
&\quad+\frac{1}{2}f'(|x-y|)\mu_{\Psi}(\R^d)\|\sigma(x)-\sigma(y)\|_{\mathrm{H.S.}}\|\sigma(x)^{-1}\|_{\mathrm{H.S.}}(|x-y|\wedge\kappa),
\end{align*}
where in the second inequality we used \eqref{I42inequ1}, and the
last one follows from \eqref{I42inequ2} and the facts that $f''\le
0$ on $(0,\infty)$ and $\mu_{\Psi}(\R^d)=\mu_{\Psi^{-1}}(\R^d)$.

Combining all the conclusions above, we obtain that
\begin{align*}
 I_4&\le\frac{1}{2}\mu_{\Psi}(\R ^d)\Big(f\big(|x-y|+|x-y|\wedge\kappa\big)-f(|x-y|) \Big)\\
 &\quad+ \frac{1}{2}f'(|x-y|)\|\sigma(x)-\sigma(y)\|_{\mathrm{H.S.}}\Bigg[2\mu_{\Psi^{-1}}(\R^d)\|\sigma(x)^{-1}\|_{\mathrm{H.S.}}(|x-y|\wedge\kappa)\\
 &\qquad\qquad\qquad\qquad\qquad\qquad+\big(1+\|\sigma(y)\sigma(x)^{-1}\|_{\mathrm{H.S.}}\big)\int_{\{|z|\leqslant R\}}|z|
 \mu_{\Psi^{-1}}(dz)\Bigg]\\
 &\quad+\frac{1}{2}\int_{\{|z|> R\}}f\Big(\big(1+\|\sigma(y)\sigma(x)^{-1}\|_{\mathrm{H.S.}}\big)\|\sigma(x)-\sigma(y)\|_{\mathrm{H.S.}}|z|\Big)\mu_{\Psi^{-1}}(dz).\end{align*}

 (iv) For $I_5$,
we have
 \begin{align*}
  I_5&=\int_{\{|z|\le 1\}}\Big(f\big(|x-y+(\sigma(x)-\sigma(y))z|\big)-f(|x-y|)\\
    &\hskip60pt - \frac{f'(|x-y|)}{|x-y|}\langle x-y, (\sigma(x)-\sigma(y))z\rangle \Big)\,\Big(\nu\!-\!\frac{1}{2}\mu_{\Psi}\!-\!\frac{1}{2}\mu_{\Psi^{-1}}\Big)(dz)\\
    &\hskip17pt+\int_{\{|z|>1\}}\!\!\Big(f\big(|x-y+(\sigma(x)-\sigma(y))z|\big)-f(|x-y|)\Big)\,\Big(\nu-\frac{1}{2}\mu_{\Psi}-\frac{1}{2}\mu_{\Psi^{-1}}\Big)(dz)\\
    &=:I_{5,1}+I_{5,2}.\end{align*} By \eqref{e:i1} again, we find
\begin{align*}
  I_{5,1}&\leqslant f'(|x-y|)\int_{\{|z|\le 1\}}\Big(|x-y+(\sigma(x)-\sigma(y))z|-|x-y|\\
    &\hskip80pt -\frac{1}{|x-y|}\langle x-y,
    (\sigma(x)\!-\!\sigma(y))z\rangle\Big)\,\Big(\nu\!-\!\frac{1}{2}\mu_{\Psi}\!-\!\frac{1}{2}\mu_{\Psi^{-1}}(dz)\Big)\\
    &\leqslant \frac{f'(|x-y|)}{2|x-y|}\int_{\{|z|\le 1\}}|(\sigma(x)-\sigma(y))z|^2\,\Big(\nu-\frac{1}{2}\mu_{\Psi}-\frac{1}{2}\mu_{\Psi^{-1}}\Big)(dz)\\
    &\leqslant f'(|x-y|)\frac{\|\sigma(x)-\sigma(y)\|_{\mathrm{H.S.}}^2}{2|x-y|}\int_{\{|z|\le 1\}}|z|^2\,\Big(\nu-\frac{1}{2}\mu_{\Psi}-\frac{1}{2}\mu_{\Psi^{-1}}\Big)(dz)\\
        &\leqslant f'(|x-y|)\frac{\|\sigma(x)-\sigma(y)\|_{\mathrm{H.S.}}^2}{2|x-y|}\int_{\{|z|\le 1\}}|z|^2\,\nu(dz),
   \end{align*}
where in the second inequality we used the fact that
  $$|x+y|-|x|- \frac{1}{|x|}\langle x, y\rangle\le \frac{\,\,\,|y|^2}{2|x|},\quad x,y\in\R^d\,\, {\rm with } \,\,x\neq 0.$$
On the other hand, using \eqref{e:i1} and \eqref{e:i2}, we obtain
that for all $R\in[1,\infty]$,
\begin{align*}
 I_{5,2}
 &=\int_{\{1<|z|\le R\}}\Big(f\big(|x-y+(\sigma(x)-\sigma(y))z|\big)\!-\!f(|x-y|) \Big)\,\!\!\Big(\nu-\frac{1}{2}\mu_{\Psi}-\frac{1}{2}\mu_{\Psi^{-1}}\Big)(dz)\\
 &\quad+\int_{\{|z|>R\}}\Big(f\big(|x-y+(\sigma(x)-\sigma(y))z|\big)\!-\!f(|x-y|) \Big)\,\!\!\Big(\nu-\frac{1}{2}\mu_{\Psi}-\frac{1}{2}\mu_{\Psi^{-1}}\Big)(dz)\\
 &\le f'(|x-y|)\|\sigma(x)-\sigma(y)\|_{\mathrm{H.S.}}\int_{\{1<|z|\le R\}}|z|\,\Big(\nu-\frac{1}{2}\mu_{\Psi}-\frac{1}{2}\mu_{\Psi^{-1}}\Big)(dz)\\
 &\quad+ \int_{\{|z|> R\}}f(\|\sigma(x)-\sigma(y)\|_{\mathrm{H.S.}}|z|)\,\Big(\nu-\frac{1}{2}\mu_{\Psi}-\frac{1}{2}\mu_{\Psi^{-1}}\Big)(dz)\\
 &\le f'(|x-y|)\|\sigma(x)-\sigma(y)\|_{\mathrm{H.S.}}\int_{\{1<|z|\le R\}}|z|\,\nu(dz)\\
 &\quad+ \int_{\{|z|> R\}}f(\|\sigma(x)-\sigma(y)\|_{\mathrm{H.S.}}|z|)\,\nu(dz).
\end{align*}

Combining both inequalities above, we arrive at
 \begin{align*}
  I_5&\le f'(|x-y|)\|\sigma(x)-\sigma(y)\|_{\mathrm{H.S.}}\\
  &\quad\,\,\,\,\times \bigg[\frac{\|\sigma(x)-\sigma(y)\|_{\mathrm{H.S.}}}{2|x-y|}\int_{\{|z|\le 1\}}|z|^2\,\nu(dz)+\int_{\{1<|z|\le R\}}|z|\,\nu(dz)\bigg]\\
  &\quad+\int_{\{|z|> R\}}f(\|\sigma(x)-\sigma(y)\|_{\mathrm{H.S.}}|z|)\,\nu(dz).\end{align*}

\medskip

Finally, putting all the estimates in (i)--(iv) into \eqref{cpwd}, we
can get the following statement.

\begin{proposition}\label{P:estimates} Assume that \eqref{p:est0}
holds. Then, for all $f\in
C([0,\infty))\cap C_b^2((0,\infty))$ such that $f(0)=0$, $f\ge0$, $f'\ge0$ and $f''\le 0$ on $(0,\infty)$, any $R\in [1,\infty]$ and
$x,y\in \R^d$ with $x\neq y$,
\begin{equation}\label{p:est1}\begin{split}\widetilde L f(|x-y|)\le& \Theta_0(f)(x,y)+ \frac{f'(|x-y|)}{|x-y|}\< b(x)-b(y),x-y\>\\
&+ f'(|x-y|)\|\sigma(x)-\sigma(y)\|_{\mathrm{H.S.}}\Theta_{\le
R}(x,y)\\
&+\Theta_{> R}(f)(x,y),\end{split}\end{equation} where
\begin{align*}\Theta_0(f)(x,y):&= \frac{1}{2}\mu_{\Psi}(\R
^d)\Big(f\big(|x-y|+|x-y|\wedge\kappa\big)\\
&\qquad\qquad\quad\,\,+f\big(|x-y|-|x-y|\wedge\kappa\big)-2f(|x-y|)
\Big),\\
\Theta_{\le R}(x,y):&=\Lambda \mu_{\Psi}(\R^d)(|x-y|\wedge
\kappa)+\frac{\|\sigma(x)-\sigma(y)\|_{\mathrm{H.S.}}}{2|x-y|}\int_{\{|z|\le 1\}}|z|^2\,\nu(dz)\\
&\quad+(1+\Lambda^2/2)\int_{\{|z|\le
R\}}|z|\,(\mu_{\Psi}+\mu_{\Psi^{-1}})(dz)+\int_{\{1<|z|\le
R\}}|z|\,\nu(dz)\end{align*} and
$$\Theta_{> R}(f)(x,y):=2\int_{\{|z|> R\}}f\Big(\big(1+\Lambda^2\big)\|\sigma(x)-\sigma(y)\|_{\mathrm{H.S.}}|z|\Big)
\nu(dz).$$
\end{proposition}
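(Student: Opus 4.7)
The proof is essentially a bookkeeping exercise: the plan is to assemble the decomposition $\widetilde L f(|x-y|) = I_1+I_2+I_3+I_4+I_5$ from \eqref{cpwd} together with the pointwise bounds on $I_2,\ldots,I_5$ obtained in parts (i)--(iv) just above, and then regroup the result into the four summands on the right-hand side of \eqref{p:est1}. The drift piece $I_1$ is kept untouched and supplies the $\frac{f'(|x-y|)}{|x-y|}\langle b(x)-b(y),x-y\rangle$ term.

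First I would produce $\Theta_0(f)(x,y)$. It emerges by combining $I_3$, which contributes $\tfrac12\mu_\Psi(\R^d)\bigl(f(|x-y|-|x-y|\wedge\kappa)-f(|x-y|)\bigr)$, with the summand $I_{4,1}$ from the bound on $I_4$, which contributes $\tfrac12\mu_\Psi(\R^d)\bigl(f(|x-y|+|x-y|\wedge\kappa)-f(|x-y|)\bigr)$; here the identification $\mu_\Psi(\R^d)=\mu_{\Psi^{-1}}(\R^d)$ comes from Lemma \ref{C:mea}(1). Their sum is exactly $\Theta_0(f)(x,y)$.

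Next I would gather every remaining piece that carries the common factor $f'(|x-y|)\|\sigma(x)-\sigma(y)\|_{\mathrm{H.S.}}$: the bound on $I_2$, the summand $I_{4,2,R}$ together with the $f'$-part of $I_{4,3,R}$, $I_{5,1}$, and the $\{1<|z|\le R\}$ contribution to $I_{5,2}$. Applying the nondegeneracy assumption \eqref{p:est0} in the form $\|\sigma(x)^{-1}\|_{\mathrm{H.S.}}\le\Lambda$ and hence $\|\sigma(y)\sigma(x)^{-1}\|_{\mathrm{H.S.}}\le\Lambda^2$, together with $\mu_\Psi(\R^d)=\mu_{\Psi^{-1}}(\R^d)$, I would collapse the $(|x-y|\wedge\kappa)$-coefficient into $\Lambda\mu_\Psi(\R^d)$; combine the small-$z$ integrals from $I_2$ (which contributes $\tfrac12$ against $\mu_\Psi+\mu_{\Psi^{-1}}$) and from $I_{4,2,R}$ (which contributes $\tfrac12(1+\|\sigma(y)\sigma(x)^{-1}\|_{\mathrm{H.S.}})\le\tfrac12(1+\Lambda^2)$ against $\mu_{\Psi^{-1}}$) into the single expression $(1+\Lambda^2/2)\int_{\{|z|\le R\}}|z|(\mu_\Psi+\mu_{\Psi^{-1}})(dz)$; and let $I_{5,1}$ and the first line of $I_{5,2}$ furnish the $|z|^2/(2|x-y|)$-integral on $\{|z|\le 1\}$ and the $|z|$-integral on $\{1<|z|\le R\}$ against $\nu$, using the dominations $\nu-\tfrac12\mu_\Psi-\tfrac12\mu_{\Psi^{-1}}\le\nu$.

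Finally, the tail term $\Theta_{>R}(f)(x,y)$ collects the two surviving integrals over $\{|z|>R\}$: one from the non-$f'$ part of $I_{4,3,R}$ (against $\mu_{\Psi^{-1}}$) and one from the tail of $I_{5,2}$ (against $\nu-\tfrac12\mu_\Psi-\tfrac12\mu_{\Psi^{-1}}$). Using $\mu_{\Psi^{-1}}\le\nu$ and $1+\|\sigma(y)\sigma(x)^{-1}\|_{\mathrm{H.S.}}\le 1+\Lambda^2$, both fit inside $2\int_{\{|z|>R\}}f\bigl((1+\Lambda^2)\|\sigma(x)-\sigma(y)\|_{\mathrm{H.S.}}|z|\bigr)\,\nu(dz)$. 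The only delicate point — and the step where I expect to be most careful — is the aggregation of the various $\tfrac12$-factors in the $\{|z|\le R\}$ layer: verifying that after applying $\|\sigma(y)\sigma(x)^{-1}\|_{\mathrm{H.S.}}\le\Lambda^2$ and invoking Lemma \ref{C:mea}(1) once more, the composite coefficient in front of $\int_{\{|z|\le R\}}|z|(\mu_\Psi+\mu_{\Psi^{-1}})(dz)$ is exactly $1+\Lambda^2/2$ with both measures entering symmetrically. The rest is routine.
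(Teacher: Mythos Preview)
Your proposal is correct and follows exactly the approach of the paper: the proposition is stated immediately after the bounds (i)--(iv) with the single sentence ``putting all the estimates in (i)--(iv) into \eqref{cpwd}, we can get the following statement,'' and your write-up carries out precisely that assembly, including the correct identification of how the coefficient $1+\Lambda^2/2$ arises by majorizing $\tfrac12(1+\Lambda^2)\int_{\{|z|\le R\}}|z|\,\mu_{\Psi^{-1}}(dz)$ by $\tfrac12(1+\Lambda^2)\int_{\{|z|\le R\}}|z|\,(\mu_\Psi+\mu_{\Psi^{-1}})(dz)$ and adding the $\tfrac12$ from $I_2$.
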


\begin{remark} We should mention that, if $\nu$ in the definitions of $\mu_\Psi$ and $\mu_{\Psi^{-1}}$ is replaced by  any Borel measure $\nu_0$ on $(\R^d,\mathscr{B}(\R^d))$ such that $0<\nu_0\le \nu$, then all the conclusions above still hold true.
\end{remark}
 \section{Regularity and ergodicity via coupling}\label{section44}
Assume that the SDE \eqref{s1} has a unique strong solution, which is denoted by $X:=(X_t)_{t\ge0}$. Let $(P_t)_{t\ge0}$ be the associated semigroup. Let $\widetilde L$ be the coupling operator given by \eqref{cp1}.

\subsection{Regularity via coupling}
The statement below shows an idea to establish regularity properties of semigroups by adopting the coupling operator $\widetilde{L}$.

\begin{proposition}\label{P:re}Assume that there exist a constant $\varepsilon_0>0$ and a sequence of positive and increasing functions $\{\psi_n\}_{n\ge1}$ such that for all $x,y\in \R^d$ with $1/n\le |x-y|\le \varepsilon\le \varepsilon_0$,
\begin{equation}\label{e:sregu}\widetilde{L} \psi_n(|x-y|)\le -\lambda(\varepsilon),\end{equation} where $\lambda(\varepsilon)$ is a positive constant independent of $n$. Then, for
any $t>0$ and $f\in B_b(\R^d)$,
$$\sup_{x\neq y}\frac{{|P_t f(x)-P_t f(y)|}}{\psi_\infty(|x-y|)}\le
  2\|f\|_\infty \sup_{\varepsilon\in(0,\varepsilon_0]}\bigg[\frac{1}{\psi_\infty(\varepsilon)}+\frac{1}{t\lambda(\varepsilon)}\bigg]. $$ where
$\psi_\infty=\liminf_{n\to\infty}\psi_n.$
\end{proposition}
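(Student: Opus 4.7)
The natural plan is to exploit the Markov coupling $(X_t,Y_t)_{t\ge 0}$ constructed in Section \ref{section2}. Since $X_t=Y_t$ for all $t\ge T$, where $T$ denotes the coupling time,
$$|P_tf(x)-P_tf(y)|=\bigl|\Ee^{(x,y)}[f(X_t)-f(Y_t)]\bigr|\le 2\|f\|_\infty\,\Pp^{(x,y)}(T>t),$$
so it suffices to show $\Pp^{(x,y)}(T>t)\le \psi_\infty(|x-y|)\sup_{\varepsilon\in(0,\varepsilon_0]}\bigl[1/\psi_\infty(\varepsilon)+1/(t\lambda(\varepsilon))\bigr]$.

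For each $\varepsilon\in(0,\varepsilon_0]$ and $n\ge 1$, I would introduce the exit time $\sigma_{n,\varepsilon}:=\inf\{s\ge 0:|X_s-Y_s|\notin(1/n,\varepsilon)\}$ of the annulus $(1/n,\varepsilon)$. When $|x-y|\in(1/n,\varepsilon)$, hypothesis \eqref{e:sregu} applies throughout $[0,\sigma_{n,\varepsilon})$ and Dynkin's formula gives
$$\Ee^{(x,y)}\bigl[\psi_n(|X_{t\wedge\sigma_{n,\varepsilon}}-Y_{t\wedge\sigma_{n,\varepsilon}}|)\bigr]\le \psi_n(|x-y|)-\lambda(\varepsilon)\,\Ee^{(x,y)}[t\wedge\sigma_{n,\varepsilon}].$$
From $\psi_n\ge 0$ and Markov's inequality, $\Pp(\sigma_{n,\varepsilon}>t)\le \psi_n(|x-y|)/(t\lambda(\varepsilon))$. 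Separately, using the monotonicity $\psi_n(|X_{\sigma_{n,\varepsilon}}-Y_{\sigma_{n,\varepsilon}}|)\ge \psi_n(\varepsilon)$ on the event that the exit occurs through the outer boundary, the same inequality yields $\Pp(\sigma_{n,\varepsilon}\le t,\,|X_{\sigma_{n,\varepsilon}}-Y_{\sigma_{n,\varepsilon}}|\ge\varepsilon)\le \psi_n(|x-y|)/\psi_n(\varepsilon)$.

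I then decompose $\{T>t\}\subseteq A_1\cup A_2\cup A_3$, where $A_1=\{\sigma_{n,\varepsilon}>t\}$, $A_2=\{\sigma_{n,\varepsilon}\le t,\,|X_{\sigma_{n,\varepsilon}}-Y_{\sigma_{n,\varepsilon}}|\ge\varepsilon\}$, and $A_3=\{\sigma_{n,\varepsilon}\le t,\,|X_{\sigma_{n,\varepsilon}}-Y_{\sigma_{n,\varepsilon}}|\le 1/n,\,T>t\}$. The first two are controlled above, and the third is a ``wastebin'' that should vanish as $n\to\infty$, using the fact that in the coupling \eqref{basic-coup-4} the first-row jump produces $X_s=Y_s$ with positive rate once $|X_{s-}-Y_{s-}|\le\kappa$, so the distance cannot linger in $(0,1/n]$ without coupling. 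Passing $n\to\infty$ and invoking $\psi_\infty=\liminf_n\psi_n$, one arrives at, for $|x-y|\le\varepsilon$,
$$\Pp^{(x,y)}(T>t)\le \psi_\infty(|x-y|)\left[\frac{1}{\psi_\infty(\varepsilon)}+\frac{1}{t\lambda(\varepsilon)}\right].$$
For $|x-y|>\varepsilon$ the bound is immediate from monotonicity, since $\psi_\infty(|x-y|)/\psi_\infty(\varepsilon)\ge 1\ge \Pp(T>t)$. Taking the supremum over $\varepsilon\in(0,\varepsilon_0]$ finishes the proof.

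The main obstacle I anticipate is rigorously controlling the wastebin term $A_3$ so that it vanishes in the limit $n\to\infty$; this is not immediate from the abstract hypothesis and requires the explicit structure of the coupling, in particular a lower bound on the first-row jump intensity for small $|x-y|$. A secondary technical point is justifying Dynkin's formula for the non-smooth function $\psi_n(|\cdot|)$ on the coupled jump process, which may require truncating or mollifying $\psi_n$ outside $[1/n,\varepsilon]$ and then passing to the limit.
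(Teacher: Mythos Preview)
Your strategy coincides with the paper's: use the Markov coupling to reduce to a bound on $\Pp(T>t)$, and control this via Dynkin's formula applied up to the exit time from the annulus $(1/n,\varepsilon)$. The two Dynkin estimates you write down (expected exit time, and exit probability through the outer boundary) are exactly those in the paper.

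The one organizational difference is what creates your flagged obstacle. Rather than decomposing $\{T>t\}$ into three pieces, the paper bounds $\Pp(T_n>t)$ for $T_n:=\inf\{t:|X_t-Y_t|\le 1/n\}$ by only two terms,
\[
\Pp(T_n>t)\le \Pp(T_n\wedge S_\varepsilon>t)+\Pp(S_\varepsilon<T_n)\le \frac{\Ee[T_n\wedge S_\varepsilon]}{t}+\frac{\psi_n(|x-y|)}{\psi_n(\varepsilon)},
\]
and then passes to the limit via $T_n\uparrow T$, which gives $\Pp(T_n>t)\to\Pp(T>t)$ by monotone convergence. Your wastebin satisfies $A_3\subseteq\{T_n\le t<T\}$, so $\Pp(A_3)\to 0$ is \emph{equivalent} to $T_n\uparrow T$; no lower bound on the first-row jump intensity is needed. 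The only structural fact used is that the coupling stays on the diagonal once it reaches it (established in the construction of Section~\ref{secu1}): if $\lim_n T_n=S<\infty$, then $|X_{S-}-Y_{S-}|=0$ by c\`adl\`ag paths, and the coupling dynamics force $X_S=Y_S$, hence $T\le S$. Your secondary concern about justifying Dynkin's formula for $\psi_n(|\cdot|)$ is not addressed in the paper; it is applied without comment.
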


\begin{proof} The proof was almost known before, e.g.,\ see that of \cite[Theorem 5.1]{Lwcw} or \cite[Theorem 1.2]{LW14}. For the sake of completeness, we present it here. Let $(X_t,Y_t)_{t\ge0}$ be the coupling process  constructed
in Subsection \ref{secu1}, and denote by $\widetilde{\Pp}^{(x,y)}$ and
$\widetilde{\Ee}^{(x,y)}$ the distribution and the expectation of
$(X_t,Y_t)_{t\ge0}$ starting from $(x,y)$, respectively. For any $\varepsilon\in(0,\varepsilon_0]$ and
$n\ge1$, we set
  $$\aligned S_\varepsilon&:=\inf\{t\ge0: |X_t-Y_t|>\varepsilon\},\\
  T_n&:=\inf\{t\ge0: |X_t-Y_t|\le 1/n\}.\endaligned$$
Note that $T_n\uparrow T$ as $n\uparrow\infty,$ where $T$ is the coupling time, i.e.,\ $$T:=\inf\{t\ge0: X_t=Y_t\}.$$

For any $x,$ $y\in\R^d$ with $0<|x-y|<\varepsilon\le \varepsilon_0$, we take $n$ large enough such that $|x-y|>1/n$. Then, by \eqref{e:sregu}, for any $t>0$,
  $$\aligned
  0&\le\widetilde{\Ee}^{(x,y)}\psi_n\big(|X_{t\wedge T_n\wedge S_\varepsilon}-Y_{t\wedge T_n\wedge S_\varepsilon}|\big)\\
  &=\psi_n(|x-y|)+\widetilde{\Ee}^{(x,y)}\bigg(\int_0^{t\wedge T_n\wedge S_\varepsilon} \widetilde{L} \psi_n\big(|X_{s}-Y_{s}|\big)\,ds\bigg)\\
  &\le \psi_n(|x-y|)-\lambda(\varepsilon)\widetilde{\Ee}^{(x,y)}(t\wedge
  T_n\wedge S_\varepsilon).\endaligned$$
Hence,
  $$\widetilde{\Ee}^{(x,y)}(t\wedge T_n\wedge S_\varepsilon)\le
  \frac{ \psi_n(|x-y|)}{\lambda(\varepsilon)}.$$ Letting $t\to\infty$,
  $$\widetilde{\Ee}^{(x,y)}(T_n\wedge S_\varepsilon)\le
  \frac{ \psi_n(|x-y|)}{\lambda(\varepsilon)}.$$

On the other hand, again by \eqref{e:sregu}, for any $x$, $y\in\R^d$ with $1/n\le |x-y|<\varepsilon\le \varepsilon_0$,
  $$\aligned &\widetilde{\Ee}^{(x,y)}\psi_n\big(|X_{t\wedge T_n\wedge S_\varepsilon}-Y_{t\wedge T_n\wedge
  S_\varepsilon}|\big)\\
  &=\psi_n(|x-y|)+\widetilde{\Ee}^{(x,y)}\bigg(\int_0^{t\wedge T_n\wedge S_\varepsilon}  \widetilde{L} \psi_n(| X_u-Y_u|)\,du\bigg)\\
  &\le \psi_n(|x-y|).\endaligned$$
This along with the increasing property of $\psi_n$ yields that
  $$\widetilde{\Pp}^{(x,y)}(S_\varepsilon<T_n\wedge t)\le \frac{\psi_n(|x-y|)}{\psi_n(\varepsilon)}.$$
Letting $t\to \infty$, $$\widetilde{\Pp}^{(x,y)}(S_\varepsilon<T_n)\le \frac{\psi_n(|x-y|)}{\psi_n(\varepsilon)}.$$

Therefore, combining both estimates above, we obtain that
  $$\aligned
   \widetilde{\Pp}^{(x,y)}(T_n> t)
    &\le \widetilde{\Pp}^{{(x,y)}}(T_n\wedge S_\varepsilon>t)+\widetilde{\Pp}^{{(x,y)}}(T_n>S_\varepsilon)\\
  &\le \frac{\widetilde{\Ee}^{(x,y)}(T_n\wedge S_\varepsilon)}{t}+\frac{\psi_n(|x-y|)}{\psi_n(\varepsilon)}\\
  &\le \psi_n(|x-y|)\bigg[\frac{1}{\psi_n(\varepsilon)}+\frac{1}{t\lambda(\varepsilon)}\bigg].
  \endaligned$$ It follows that
  \begin{align*}\widetilde{\Pp}^{(x,y)}(T> t)&= \lim_{n\to\infty}\widetilde{\Pp}^{(x,y)}(T_n> t)\\
  &\le \liminf_{n\to\infty}\left\{\psi_n(|x-y|)\bigg[\frac{1}{\psi_n(\varepsilon)}+\frac{1}{t\lambda(\varepsilon)}\bigg]\right\}\\
  &\le  \psi_\infty(|x-y|)\bigg[\frac{1}{\psi_\infty(\varepsilon)}+\frac{1}{t\lambda(\varepsilon)}\bigg].\end{align*}
Thus, for any $f\in B_b(\R^d)$, $t>0$ and any $x$, $y\in\R^d$ with $0<|x-y|<\varepsilon\le \varepsilon_0$,
  \begin{align*}
 {|P_t f(x)-P_t f(y)|}&={|\Ee^xf(X_t)-\Ee^yf(Y_t)|}\\
  &={\big|\widetilde{{\Ee}}^{(x,y)}(f(X_t)-f(Y_t))\big|}\\
  &={\big|\widetilde{{\Ee}}^{(x,y)}(f(X_t)-f(Y_t))\I_{\{T> t\}}\big|}\\
  &\le 2\|f\|_\infty \widetilde{{\Pp}}^{(x,y)}(T> t)\\
  &\le 2\|f\|_\infty  \psi_\infty(|x-y|)\bigg[\frac{1}{\psi_\infty(\varepsilon)}+\frac{1}{t\lambda(\varepsilon)}\bigg].
  \end{align*}
Consequently,
  $$\sup_{|x-y|\le\varepsilon}\frac{{|P_t f(x)-P_t f(y)|}}{\psi_\infty(|x-y|)}
  \le 2\|f\|_\infty\bigg[\frac{1}{\psi_\infty(\varepsilon)}+\frac{1}{t\lambda(\varepsilon)}\bigg].$$
Since $\psi_\infty$ is increasing on $(0,\infty)$, and
  $$\sup_{|x-y|\ge\varepsilon}\frac{{|P_t f(x)-P_t f(y)|}}{\psi_\infty(|x-y|)}   \le \frac{2\|f\|_\infty}{\psi_\infty(\varepsilon)},$$
we further obtain that for all $\varepsilon\in(0,\varepsilon_0]$,
  $$\sup_{x\neq y}\frac{{|P_t f(x)-P_t f(y)|}}{\psi_\infty(|x-y|)}\le
  2\|f\|_\infty \bigg[\frac{1}{\psi_\infty(\varepsilon)}+\frac{1}{t\lambda(\varepsilon)}\bigg]. $$
Taking infimum with respect to $\varepsilon\in(0,\varepsilon_0]$ in
the right hand side of the inequality above proves the desired
assertion.
\end{proof}
\begin{theorem}\label{T:re} Suppose that the diffusion coefficient $\sigma(x)$ is Lipschitz continuous and satisfies \eqref{p:est0}, and the drift term $b(x)$ is locally $\beta$-H\"{o}lder
continuous with $\beta\in(0,1]$. Assume also that there is  a
nonnegative and $C_b([0,\infty))\cap C^3((0,\infty))$-function $\psi$ such that
\begin{itemize}
\item[(i)] $\psi(0)=0$, $\psi'\ge0$, $\psi''\le0$ and $\psi'''\ge0$ on $(0,2]$;
\item[(ii)] For any constants $c_1,c_2>0$,
\begin{equation}\label{e:fe1}\limsup_{r\to0}\bigg[J(r)r^2\psi''(2r)+c_1K(r)\psi'(r)r+c_2\psi'(r)r^\beta\bigg]<0,\end{equation} where \begin{equation}\label{e:llev}
J(r)=\inf_{x,y\in\R^d:|x-y|\le r}\mu_\Psi(\R^d)\end{equation} and
$$K(r)=\sup_{x,y\in\R^d:|x-y|=
r}\Big(\mu_\Psi(\R^d)|x-y|+\int_{\{|z|\le
2\}}|z|\,(\mu_{\Psi}+\mu_{\Psi^{-1}})(dz)\Big).$$
\end{itemize}Then, there are constants $C,\varepsilon_0>0$ such that for all $f\in B_b(\R^d)$ and $t>0$,
$$\sup_{x\neq y}\frac{|P_tf(x)-P_tf(y)|}{|x-y|}\le C\|f\|_\infty\inf_{\varepsilon\in(0,\varepsilon_0]}\bigg[\frac{1}{\psi(\varepsilon)}+\frac{1}{t\lambda_\psi(\varepsilon)}\bigg],$$ where
$$\lambda_\psi(\varepsilon)=-\sup_{0<r\le \varepsilon}J(r)r^2\psi''(2r).$$\end{theorem}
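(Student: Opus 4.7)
My strategy is to apply Proposition~\ref{P:re} to a carefully constructed sequence $\{\psi_n\}_{n\ge 1}$ built from the given $\psi$. The natural choice is to take $\psi_n$ to coincide with $\psi$ on $[2/n,\infty)$ (possibly with a harmless bounded truncation for large $r$) and to modify $\psi$ smoothly on $[0, 2/n]$ by a linear or concave piece so that $\psi_n\in C([0,\infty))\cap C_b^2((0,\infty))$ with $\psi_n(0)=0$, $\psi_n'\ge 0$, $\psi_n''\le 0$ on $(0,\infty)$, and such that $\psi_n(r)\ge c_0\, r$ near $0$ independently of $n$; then $\psi_\infty:=\liminf_n \psi_n$ satisfies $\psi_\infty=\psi$ on $(0,\infty)$ and $\psi_\infty(r)\ge c_0\, r$ on a fixed neighborhood of $0$.

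With $\kappa\ge \varepsilon_0$ chosen so that $|x-y|\wedge\kappa=|x-y|$ on $\{|x-y|\le\varepsilon_0\}$, and with a sufficiently large finite $R>1$, applying Proposition~\ref{P:estimates} to $f=\psi_n$ with $r=|x-y|$ gives four contributions to control. The jump term $\tfrac12\mu_\Psi(\R^d)[\psi_n(2r)-2\psi_n(r)]$ provides the dominant negative contribution bounded by $\tfrac12 J(r)\,r^2\psi_n''(2r)$; this uses that the condition $\psi_n'''\ge 0$ on $(0,2r]$ makes $\psi_n''$ nondecreasing, and a second-order Taylor argument then yields $\psi_n(2r)-2\psi_n(r)\le r^2\psi_n''(2r)$. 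The drift term is bounded by local $\beta$-H\"older continuity of $b$, contributing at most $L_b\,\psi_n'(r)\,r^\beta$. The $\sigma$-term is controlled via Lipschitz continuity of $\sigma$, giving $L_\sigma\,\psi_n'(r)\,r\cdot\Theta_{\le R}(x,y)$, whose dominant scaling is captured by $K(r)\psi_n'(r)\,r$ (the piece involving $\int_{\{|z|\le 1\}}|z|^2\,\nu(dz)$ and the fixed constant-tail piece are lower order). The tail contribution $\Theta_{>R}(\psi_n)$ is bounded by $2\|\psi_n\|_\infty\nu(\{|z|>R\})$, which can be made arbitrarily small by taking $R$ large. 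Assembling these estimates and invoking hypothesis \eqref{e:fe1}, one gets, for some $c>0$ and all $r\in(0,\varepsilon_0]$,
$$
\widetilde L\psi_n(r)\;\le\; c\,J(r)\,r^2\psi_n''(2r),
$$
so that $\widetilde L\psi_n(r)\le -c\,\lambda_\psi(\varepsilon)$ uniformly for $r\in[1/n,\varepsilon]$ with $\varepsilon\le\varepsilon_0$.

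Proposition~\ref{P:re} then yields
$$
\sup_{x\neq y}\frac{|P_tf(x)-P_tf(y)|}{\psi_\infty(|x-y|)}\le 2\|f\|_\infty\inf_{\varepsilon\in(0,\varepsilon_0]}\left[\frac{1}{\psi_\infty(\varepsilon)}+\frac{1}{ct\,\lambda_\psi(\varepsilon)}\right].
$$
Since $\psi_\infty(r)\ge c_0\,r$ near $0$ (by construction of $\psi_n$) and $\psi_\infty=\psi$ on $(0,\infty)$, this upgrades to the desired bound for $|x-y|\le\varepsilon_0$; for $|x-y|>\varepsilon_0$, the trivial estimate $|P_tf(x)-P_tf(y)|\le 2\|f\|_\infty\le 2\|f\|_\infty|x-y|/\varepsilon_0$ absorbs into the overall constant. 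The main technical obstacles are the passage from the \emph{local} $\beta$-H\"older condition on $b$ to a bound uniform in $x,y\in\R^d$ (probably handled by a stopping-time localization at the exit of a large ball, since the coupling controls the distance rather than the positions), and the delicate construction of $\psi_n$ preserving all the sign conditions on $\psi^{(k)}$ across the gluing while maintaining $\psi_n(r)\ge c_0\,r$ uniformly in $n$---in particular when $\psi'(0+)=\infty$, where one must choose the linear slope of $\psi_n$ on $[0,2/n]$ judiciously so as not to spoil the estimate of $\Theta_0(\psi_n)$.
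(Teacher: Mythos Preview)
Your strategy matches the paper's---apply Proposition~\ref{P:estimates} to bound $\widetilde L\psi(|x-y|)$ and then invoke Proposition~\ref{P:re}---but the paper's execution is simpler and one step of yours is backwards.

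The paper does not build a sequence $\{\psi_n\}$: it applies both propositions directly to $\psi$ (so effectively $\psi_n\equiv\psi$ and $\psi_\infty=\psi$). The bound $\widetilde L\psi(r)\le -\tfrac18\lambda_\psi(\varepsilon)$ is obtained for all $0<r\le\varepsilon\le\varepsilon_0$, which already covers the range $[1/n,\varepsilon]$ needed in Proposition~\ref{P:re}. For the tail, the paper simply fixes $R=2$: since $\psi$ is bounded with $\psi(0)=0$ and $\nu(\{|z|>2\})<\infty$, dominated convergence gives
\[
\int_{\{|z|>2\}}\psi\big(c_4\,|x-y|\,|z|\big)\,\nu(dz)\longrightarrow 0\quad\text{as }|x-y|\to 0,
\]
and this vanishing term is absorbed by the strictly negative limsup in \eqref{e:fe1} once $\varepsilon_0$ is chosen small. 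No $R\to\infty$ argument is needed, and the extra constant $\int_{\{1<|z|\le R\}}|z|\,\nu(dz)$ in $\Theta_{\le R}$ never enters.

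Your final conversion step has the inequality in the wrong direction. From Proposition~\ref{P:re} one obtains $|P_tf(x)-P_tf(y)|\le M\,\psi(|x-y|)$; to upgrade this to $|P_tf(x)-P_tf(y)|\le CM\,|x-y|$ you need $\psi(r)\le Cr$ near $0$ (equivalently $\psi'(0+)<\infty$), not $\psi(r)\ge c_0 r$. The latter is automatic from concavity and $\psi(0)=0$, but it does not help here. This is precisely why, when Theorem~\ref{T:re} is applied with $\psi(r)=r^\theta$ to prove Theorem~\ref{T:thm1}(2), the conclusion carries $|x-y|^\theta$ rather than $|x-y|$ in the denominator. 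So your worry about the case $\psi'(0+)=\infty$ is real, but it cannot be cured by any construction of $\psi_n$; it is a constraint on $\psi$ itself.

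Your flag on the local H\"older hypothesis for $b$ is well taken; the paper's proof treats it as a uniform bound for $|x-y|\le\varepsilon_0$ (its parenthetical ``$c_2$ may depend on $\varepsilon_0$'') and does not localize in the positions.
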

\begin{proof}It is clear that \eqref{e:fe1} implies
\begin{equation}\label{e:ffkk2} \limsup_{r\to0}J(r)r^2\psi''(2r)<0.\end{equation}

Let $\varepsilon\in(0,\kappa\wedge1)$. For any $x,y\in \R^d$ with
$0<|x-y|\le \varepsilon$, by \eqref{p:est1} (with $R=2$) and the
assumptions that $\sigma(x)$ is Lipschitz continuous and satisfies
\eqref{p:est0}, and $b(x)$ is locally $\beta$-H\"{o}lder continuous with
$\beta\in(0,1]$, we find that
\begin{align*}\widetilde{L}\psi(|x-y|)&\le \frac{1}{2}\mu_\Psi(\R^d)\big(\psi(2|x-y|)-2\psi(|x-y|)\big)\\
&\quad+c_1\psi'(|x-y|)|x-y|\Big(\mu_\Psi(\R^d)|x-y|+\int_{\{|z|\le 2\}}|z|\,(\mu_{\Psi}+\mu_{\Psi^{-1}})(dz)\Big)\\
&\quad+c_2\psi'(|x-y|)|x-y|^\beta+c_3\int_{\{|z|\ge2\}}\psi(c_4|x-y||z|)\,\nu(dz)
\end{align*} for some constants $c_1,c_2,c_3,c_4>0$.
Note that, by $\psi'''\geq 0$ on $(0,2]$ and $\psi(0)=0$, we have for $r>0$ small enough,
  $$\psi(2r)-2\psi(r)=\int_0^r\int_s^{r+s} \psi''(u)\,du\,ds\le\psi''(2r) r^2.$$
Then, using \eqref{e:ffkk2}  and \eqref{e:fe1}, we
can choose $\varepsilon_0\in(0,\kappa \wedge1)$ such that for all
$x,y\in\R^d$ with $0<|x-y|\le \varepsilon\le \varepsilon_0$,
 \begin{align*}\widetilde{L}\psi(|x-y|)&\le \frac{1}{2}J(|x-y|)\psi''(2|x-y|)|x-y|^2+c_1K(|x-y|)\psi'(|x-y|)|x-y|\\
&\quad+c_2\psi'(|x-y|)|x-y|^\beta+c_3\int_{\{|z|\ge2\}}\psi(c_4|x-y||z|)\,\nu(dz)\\
&\le \frac{1}{4}\bigg[J(|x-y|)\psi''(2|x-y|)|x-y|^2+c_1K(|x-y|)\psi'(|x-y|)|x-y|\\
&\quad\quad+c_2\psi'(|x-y|)|x-y|^\beta\bigg]\\
&\le \frac{1}{4}\sup_{|x-y|\le \varepsilon}\bigg[J(|x-y|)\psi''(2|x-y|)|x-y|^2\\
&\qquad\qquad\quad+c_1K(|x-y|)\psi'(|x-y|)|x-y|+c_2\psi'(|x-y|)|x-y|^\beta\bigg].\end{align*}
where in the first inequality the constant $c_2$ may depend on $\varepsilon_0$ but can be chosen to be independent of $\varepsilon$, and  in the second inequality we also used the fact that
$$\lim_{|x-y|\to0} \int_{\{|z|\ge2\}}\psi(c_4|x-y||z|)\,\nu(dz)=0.$$
By \eqref{e:ffkk2} and \eqref{e:fe1} again, we furthermore get (by possibly choosing
$\varepsilon_0$ small enough) that
 \begin{align*}
\widetilde{L}\psi(|x-y|)&\le \frac{1}{8}\sup_{|x-y|\le \varepsilon}J(|x-y|)\psi''(2|x-y|)|x-y|^2=- \frac{1}{8}\lambda_\psi(\varepsilon)<0,
\end{align*}

Having the inequality above at hand, we can obtain the desired assertion by applying Proposition \ref{P:re}.
\end{proof}

\subsection{Ergodicity via coupling}
The following proposition is essentially taken from \cite[Theorem
3.1]{Lwcw}.
\begin{proposition}\label{P:rec}Assume that there exist  a constant $\lambda>0$ and a sequence of positive functions $\{\psi_n\}_{n\ge1}$ such that for all $x,y\in \R^d$ with $1/n\le |x-y|\le n$,
$$\widetilde{L} \psi_n(|x-y|)\le - \lambda  \psi_n(|x-y|).$$ Then, for
any $t>0$ and $x,y\in \R^d$,
$$W_{\psi_\infty}(\delta_x P_t, \delta_y P_t)\le
\psi_\infty(|x-y|)e^{-\lambda t},$$ where
$\psi_\infty=\liminf_{n\to\infty}\psi_n.$
\end{proposition}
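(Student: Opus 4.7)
The plan is to exploit the coupling process $(X_t,Y_t)_{t\ge 0}$ constructed in Subsection~\ref{secu1}, whose Markov generator coincides with the coupling operator $\widetilde{L}$ of Theorem~\ref{t:coup}, together with Dynkin's formula and a localized supermartingale argument along the lines of \cite[Theorem~3.1]{Lwcw}. For each $n\ge 1$ I would introduce the stopping times
$$T_n = \inf\{t\ge 0: |X_t-Y_t|\le 1/n\},\qquad S_n = \inf\{t\ge 0: |X_t-Y_t|\ge n\},$$
so that on $[0,T_n\wedge S_n)$ the pair $(X_s,Y_s)$ stays in the annular region $\{1/n<|u-v|<n\}$ where the hypothesis $\widetilde{L}\psi_n(|\cdot|)\le -\lambda\psi_n(|\cdot|)$ applies. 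Applying Dynkin's formula to $e^{\lambda t}\psi_n(|X_t-Y_t|)$, the $\lambda$-factor coming from differentiating the exponential is exactly compensated by the differential inequality, so the stopped process $e^{\lambda(t\wedge T_n\wedge S_n)}\psi_n(|X_{t\wedge T_n\wedge S_n}-Y_{t\wedge T_n\wedge S_n}|)$ is a true supermartingale (the martingale part is genuine once one localizes by $S_n$, using that $\psi_n$ is bounded on $\{|u-v|\le n\}$). Taking expectations gives, for any $x,y\in\R^d$ with $1/n<|x-y|<n$,
$$\widetilde{\Ee}^{(x,y)}\!\left[e^{\lambda(t\wedge T_n\wedge S_n)}\psi_n\!\left(|X_{t\wedge T_n\wedge S_n}-Y_{t\wedge T_n\wedge S_n}|\right)\right]\le\psi_n(|x-y|).$$

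Next I would pass to the limit $n\to\infty$. Non-explosion of the SDE~\eqref{s1} gives $S_n\uparrow\infty$ almost surely, while $T_n\uparrow T$, where $T$ is the coupling time, and $X_t=Y_t$ for all $t\ge T$ by the construction in Subsection~\ref{secu1}. Restricting the expectation above to the event $\{t<T_n\wedge S_n\}$ (on which $t\wedge T_n\wedge S_n=t$) and discarding the non-negative contributions from the complementary events, one obtains
$$e^{\lambda t}\,\widetilde{\Ee}^{(x,y)}\!\left[\psi_n(|X_t-Y_t|)\I_{\{t<T_n\wedge S_n\}}\right]\le\psi_n(|x-y|).$$
On $\{t<T\}$ we have $t<T_n\wedge S_n$ for all $n$ sufficiently large, so Fatou's lemma applied with $\psi_\infty=\liminf_{n\to\infty}\psi_n$ yields $\widetilde{\Ee}^{(x,y)}[\psi_\infty(|X_t-Y_t|)\I_{\{t<T\}}]\le\psi_\infty(|x-y|)\,e^{-\lambda t}$. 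Since $X_t=Y_t$ on $\{t\ge T\}$ and $\psi_\infty(0)=0$ (implicit in the definition of $W_{\psi_\infty}$), the indicator can be dropped.

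To finish, the pair $(X_t,Y_t)$ is by construction a coupling of $\delta_xP_t$ and $\delta_yP_t$, so the variational definition of $W_{\psi_\infty}$ immediately gives
$$W_{\psi_\infty}(\delta_xP_t,\delta_yP_t)\le\widetilde{\Ee}^{(x,y)}[\psi_\infty(|X_t-Y_t|)]\le\psi_\infty(|x-y|)\,e^{-\lambda t}.$$
The main obstacle I anticipate is the care required in the limiting procedure: verifying that the stopped process is genuinely (not only locally) a supermartingale, and that the boundary contributions from $\{T_n\le t\}$ and $\{S_n\le t\}$ do not interfere with the Fatou step. Both issues are handled by exploiting the non-negativity of $\psi_n$ (which makes the discarded terms harmless) and non-explosion of \eqref{s1} (which forces $S_n\uparrow\infty$), so this is routine rather than delicate, and the overall argument parallels the one in \cite[Theorem~3.1]{Lwcw}.
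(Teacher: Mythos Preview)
Your proposal is correct and follows precisely the approach the paper intends: the paper's own ``proof'' of this proposition consists entirely of a pointer to step~2 in the proof of \cite[Theorem~3.1]{Lwcw} (and to \cite[Theorem~1.3]{LW}, \cite[Theorem~1.2]{Wang}), and your argument---stopping times $T_n,S_n$, the supermartingale $e^{\lambda(t\wedge T_n\wedge S_n)}\psi_n(|X_{t\wedge T_n\wedge S_n}-Y_{t\wedge T_n\wedge S_n}|)$, then Fatou as $n\to\infty$---is exactly that argument written out. There is nothing to add.
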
\begin{proof}The proof is inspired by that of \cite[Theorem 1.3]{LW} or \cite[Theorem 1.2]{Wang}. We can refer to step 2 in the proof of
\cite[Theorem 3.1]{Lwcw} for the details. \end{proof}

Next, we assume that \eqref{e:ffgg} holds. This implies
that we can take $R=\infty$ in the estimate \eqref{p:est1}. Motivated by \cite[Theorems 4.2 and 4.4]{Lwcw}, we have the following statements.
\begin{theorem}\label{thtpw}
Assume that the diffusion coefficient $\sigma(x)$ is Lipschitz
continuous with Lipschiz constant $L_\sigma>0$ and satisfies
\eqref{p:est0} with some constant $\Lambda>0$, and that the
following conditions hold:
\begin{itemize}
\item[\rm (i)] \eqref{e:ffgg} holds for the L\'evy measure $\nu$, and \begin{equation}\label{th10}\inf_{x,y\in\R^d:|x-y|\le \kappa_0}\mu_\Psi(\R^d)>0\end{equation} and
$$A_1:=\sup_{x,y\in \R^d}\left(\Lambda \mu_{\Psi}(\R^d)(|x-y|\wedge
\kappa_0)+(1+\Lambda^2/2)\int_{\R^d}|z|\,(\mu_{\Psi}+\mu_{\Psi^{-1}})(dz)\right)<\infty$$
for some $\kappa_0>0$;
\item[\rm (ii)] The diffusion coefficient $\sigma(x)$ and the drift term $b(x)$ satisfy
\begin{equation}\label{th111}\begin{split}
  \frac{\langle b(x)-b(y), x-y\rangle}{|x-y|}+\|\sigma(x)-\sigma(y)\|_{\mathrm{H.S.}}&(A_1+A_2)\\
  &\le \begin{cases}\Phi_1(|x-y|), &|x-y|< l_0,\\
  -K_2|x-y|,& |x-y|>l_0\end{cases}\end{split}\end{equation}
 for some constants $ K_2>0,\,l_0\ge 0$, and a nonnegative concave function $\Phi_1\in C([0,2l_0])\cap C^2((0,2l_0])$ such that $\Phi_1(0)=0$ and $\Phi''_1$ is nondecreasing, where $$A_2=\int_{\{|z|>1\}}|z|\,\nu(dz)+\frac{L_\sigma}{2}\int_{\{|z|\le
 1\}}|z|^2\,\nu(dz);$$
\item[\rm (iii)]  There is a nondecreasing and concave function $\sigma\in C([0,2l_0])\cap C^2((0,2l_0])$ such that for some $\kappa\in (0,\kappa_0]$, one has
  \begin{equation*}\label{thtpw.1}
  \sigma(r)\leq \frac1{2r} J(\kappa\wedge r) (\kappa\wedge r)^2, \quad r\in (0, 2l_0];
  \end{equation*}
and the integrals $g_1(r)=\int_{0}^r \frac{1}{\sigma(s)}\,ds$ and
$g_2(r)=\int_0^r \frac{\Phi_1(s)}{s\sigma(s)}\, ds$ are well defined
for all $r\in [0,2l_0]$, where $J(r)$ is defined by \eqref{e:llev}.
\end{itemize}
Set $c_2=(2K_2)\wedge g_1(2l_0)^{-1}$ and $c_1=e^{-c_2 g(2l_0)}$, where the function $g$ is defined by
  $$g(r)=g_1(r)+\frac2{c_2} g_2(r),\quad r\in (0,2l_0].$$ Then for any $x,y\in\R^d$ and $t>0$,
$$  W_{\psi}(\delta_xP_t, \delta_yP_t)\le e^{-\lambda t} \psi(|x-y|),$$
and so
  $$  W_1(\delta_xP_t, \delta_yP_t)\le C e^{-\lambda t} |x-y|,$$
where
  \begin{equation*}\label{pwcon}\begin{split} \psi(r)&= \begin{cases}
  c_1 r+ \int_0^r e^{-c_2 g(s)}\, ds ,& r\in [0, 2l_0],\\
 \psi(2l_0)+ \psi'(2l_0) (r-2l_0), & r\in(2l_0,\infty),\end{cases}\\
      \lambda&=\frac{c_2}{1+e^{c_2 g(2l_0)}}=\frac{(2K_2)\wedge g_1(2l_0)^{-1}}{1+ \exp\big\{ g(2l_0) \big[ (2K_2)\wedge g_1(2l_0)^{-1} \big] \big\}},
  \end{split}  \end{equation*} and $$C=\frac{1+c_1}{2c_1}=\frac1{2}\Big(1+\exp\big\{g(2l_0) \big[ (2K_2)\wedge g_1(2l_0)^{-1} \big] \big\}\Big).$$

\end{theorem}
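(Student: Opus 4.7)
The plan is to apply Proposition \ref{P:rec} to the function $\psi$ from the statement (and to a bounded truncating sequence $\psi_n$ obtained by cutting $\psi$ off on $\{r \ge n\}$), so the core task will be to verify the pointwise inequality $\widetilde{L}\psi(r) \le -\lambda\psi(r)$ for $r = |x-y| > 0$. The starting point is the master estimate \eqref{p:est1} from Proposition \ref{P:estimates}: assumption \eqref{e:ffgg} permits the choice $R = \infty$ there, so the $\Theta_{>R}$ piece vanishes; condition (i) bounds $\Theta_{\le \infty}$ uniformly by $A_1 + A_2$, and after factoring out $\|\sigma(x)-\sigma(y)\|_{\mathrm{H.S.}}$, hypothesis \eqref{th111} controls the combined drift plus Lipschitz-$\sigma$ contribution by the single function $\Phi_1(r)$ on $[0,l_0]$ and by $-K_2 r$ beyond.

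The real work is to absorb the remaining $\Theta_0(\psi)(x,y) = \tfrac{1}{2}\mu_\Psi(\R^d)(\psi(r + r\wedge\kappa) + \psi(r - r\wedge\kappa) - 2\psi(r))$ into an exponential decay for $\psi$ on $(0, 2l_0]$. Using concavity of $\psi$ (via the second-difference identity $\psi(r+a) + \psi(r-a) - 2\psi(r) = \int_0^a [\psi'(r+s) - \psi'(r-s)]\,ds$) together with the lower bound \eqref{th10} on $\mu_\Psi(\R^d)$ and hypothesis (iii) $\sigma(r) \le \frac{1}{2r} J(\kappa\wedge r)(\kappa\wedge r)^2$, I would reduce $\Theta_0(\psi)$ to roughly $r\sigma(r)\psi''(r)$. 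Combining with hypothesis (ii) then reduces $\widetilde{L}\psi(r) \le -\lambda \psi(r)$ to the scalar ODE-type inequality
\[
\sigma(r)\psi''(r) + \Phi_1(r)\psi'(r)/r \le -\lambda \psi(r)/r, \qquad r \in (0, 2l_0].
\]
The definitions of $g_1, g_2, g$ and the explicit formula for $\psi$ (which give $\psi'(r) = c_1 + e^{-c_2 g(r)}$ and $\psi''(r) = -c_2 g'(r) e^{-c_2 g(r)}$) are tailored precisely to this inequality; direct substitution together with the calibrated choices $c_2 = (2K_2) \wedge g_1(2l_0)^{-1}$ and $c_1 = e^{-c_2 g(2l_0)}$ make every term close with the stated rate $\lambda$. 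Verifying this matching of constants will be the main technical obstacle and is the reason for the particular form of $c_1, c_2, \lambda$ in the statement.

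For $r > 2l_0$, the function $\psi$ is affine with slope $\psi'(2l_0) = 2c_1$ and $\psi'' \equiv 0$, so $\Theta_0(\psi) \le 0$ by concavity across the gluing point. The strict dissipativity $\langle b(x)-b(y), x-y\rangle/r \le -K_2 r$ from \eqref{th111} together with $\psi'(r) = 2c_1$ then produces a linearly growing negative contribution that dominates the bounded Lipschitz-$\sigma$ correction (uniformly in $r$, thanks to $A_1 + A_2 < \infty$) and yields $\widetilde{L}\psi(r) \le -\lambda \psi(r)$ as soon as $\lambda \le 2K_2/(1 + e^{c_2 g(2l_0)})$, which matches the stated value. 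This regime will be essentially routine once the small-distance regime is in place.

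Finally, with $\widetilde{L}\psi(r) \le -\lambda\psi(r)$ established for all $r > 0$, Proposition \ref{P:rec} applied to a suitable sequence $\psi_n$ (and passage to the limit $\psi_n \uparrow \psi$) yields $W_\psi(\delta_x P_t, \delta_y P_t) \le e^{-\lambda t}\psi(|x-y|)$. The $W_1$ bound will then follow from the sandwich $2 c_1 r \le \psi(r) \le (1+c_1) r$ for all $r \ge 0$ (the lower bound comes from $e^{-c_2 g(s)} \ge c_1$ on $[0, 2l_0]$ combined with the affine extension; the upper bound from $e^{-c_2 g(s)} \le 1$), which produces exactly the stated constant $C = (1+c_1)/(2c_1)$.
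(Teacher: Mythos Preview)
Your proposal is correct and follows essentially the same route as the paper. The paper's own proof is only a sketch: it observes that \eqref{e:ffgg} permits $R=\infty$ in Proposition~\ref{P:estimates}, reduces \eqref{p:est1} to
\[
\widetilde L f(|x-y|)\le \Theta_0(f)(x,y)+f'(|x-y|)\Big(\tfrac{\langle b(x)-b(y),x-y\rangle}{|x-y|}+\|\sigma(x)-\sigma(y)\|_{\mathrm{H.S.}}(A_1+A_2)\Big),
\]
and then defers the remaining verification (that the specific $\psi$ satisfies $\widetilde L\psi\le -\lambda\psi$, together with the comparison $2c_1 r\le\psi(r)\le(1+c_1)r$ giving the $W_1$ bound) to the arguments of \cite[Theorems 4.2 and 4.4]{Lwcw}. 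Your outline is precisely a sketch of those deferred computations, including the identification $\psi'(r)=c_1+e^{-c_2 g(r)}$, the scalar inequality on $(0,2l_0]$, the affine regime for $r>2l_0$, and the passage to $W_1$ via the sandwich on $\psi$; so the two approaches coincide.
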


\begin{theorem}\label{thtvart}
Assume that all assumptions but {\rm (ii)} and {\rm (iii)} in Theorem $\ref{thtpw}$
hold, and that the following two conditions $($replacing {\rm (ii)} and {\rm (iii)} respectively$)$ are satisfied
\begin{itemize}
\item[\rm (ii')] The diffusion coefficient $\sigma(x)$ and the drift term $b(x)$ satisfy
\begin{equation}\label{th111}\begin{split}
  \frac{\langle b(x)-b(y), x-y\rangle}{|x-y|}+\|\sigma(x)-\sigma(y)\|_{\mathrm{H.S.}}&(A_1+A_2) \\
  &\le \begin{cases} K_1, &|x-y|<l_0,\\
  -K_2|x-y|,&|x-y|\ge l_0\end{cases}\end{split}\end{equation}
 for some constants $ K_1\ge0,\, K_2>0$ and $l_0\ge 0$, where $$A_2=\int_{\{|z|>1\}}|z|\,\nu(dz)+\frac{L_\sigma}{2}\int_{\{|z|\le
 1\}}|z|^2\,\nu(dz).$$
 \item[\rm (iii')]  There is a nondecreasing and concave function $\sigma\in C([0,2l_0])\cap C^2((0,2l_0])$ such that for some $\kappa\in (0,\kappa_0\wedge l_0]$, one has
  \begin{equation*}\label{thtpw.1}
  \sigma(r)\leq \frac1{2r} J(\kappa\wedge r) (\kappa\wedge r)^2, \quad r\in (0, 2l_0];
  \end{equation*}
and the integral $g(r)=\int_{0}^r \frac{1}{\sigma(s)}\,ds$ is well defined for all $r\in [0,2l_0]$.

\end{itemize}
Then there exist constants $\lambda, c>0$ such that for any
$x,y\in\R^d$ and $t>0$,
  $$  \|\delta_xP_t- \delta_yP_t\|_{{\rm Var}}\le c e^{-\lambda t}(1+|x-y|).$$
\end{theorem}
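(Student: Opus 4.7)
Plan: I would obtain the total-variation estimate by applying Proposition \ref{P:rec} to a carefully chosen approximating sequence $\{\psi_n\}_{n\ge 1}$ of nonnegative, concave $C^2$-functions on $[0,\infty)$ with $\psi_n(0)=0$, satisfying the drift condition $\widetilde L \psi_n(|x-y|)\le -\lambda\psi_n(|x-y|)$ on $1/n\le|x-y|\le n$ with a rate $\lambda>0$ independent of $n$, and arranged so that the pointwise limit $\psi_\infty := \liminf_{n\to\infty}\psi_n$ obeys
$$c_0\I_{(0,\infty)}(r)\le \psi_\infty(r)\le C_0(1+r),\qquad r\ge 0,$$
for some $c_0,C_0>0$. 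Proposition \ref{P:rec} then gives $W_{\psi_\infty}(\delta_xP_t,\delta_yP_t)\le e^{-\lambda t}\psi_\infty(|x-y|)$, and because $\inf_{\Pi}\Pi(x\ne y)=\tfrac12\|\mu_1-\mu_2\|_{\rm Var}$ one has $W_{\psi_\infty}\ge\tfrac{c_0}{2}\|\cdot\|_{\rm Var}$, yielding
$$\|\delta_xP_t-\delta_yP_t\|_{\rm Var}\le (2C_0/c_0)\,e^{-\lambda t}(1+|x-y|),$$
which is the required bound.

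I would build $\psi_n$ by modifying the Wasserstein function constructed in Theorem \ref{thtpw}. With $\kappa\in(0,\kappa_0\wedge l_0]$ as in (iii'), let $g(r)=\int_0^r 1/\sigma(s)\,ds$ for $r\in[0,2l_0]$, where $\sigma$ here denotes the auxiliary function of (iii'). For $r\in[0,2l_0]$ set
$$\psi_n(r)=c_1 r+\int_0^r e^{-c_2 g(s)}\,h_n(s)\,ds,$$
where $h_n\in C^\infty([0,\infty))$ is a smooth non-negative cut-off vanishing on $[0,1/(2n)]$ and equal to $1$ on $[1/n,\infty)$, and $c_1,c_2>0$ are tuned so that $\psi_n$ is concave with $\psi_n(r)\le C_0(1+r)$ uniformly in $n$; extend $\psi_n$ linearly to $(2l_0,\infty)$ with slope $\psi_n'(2l_0)$. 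The profile and constants are chosen so that $\psi_\infty(r)= c_0+c_1 r$ for $r>0$ and $\psi_\infty(0)=0$, i.e.\ the limit develops a jump of height $c_0>0$ at the origin.

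The drift inequality follows from Proposition \ref{P:estimates} with $R=\infty$ (permitted by \eqref{e:ffgg}). Combining the three error terms under assumption (ii') gives
$$\widetilde L \psi_n(|x-y|)\le \Theta_0(\psi_n)(x,y)+\psi_n'(|x-y|)\cdot\begin{cases}K_1, & |x-y|<l_0,\\ -K_2|x-y|,& |x-y|\ge l_0.\end{cases}$$
For $|x-y|\ge l_0$, the linear tail of $\psi_n$ together with $-K_2|x-y|\psi_n'(|x-y|)$ produces a contribution of order $-\lambda\psi_n(|x-y|)$. For $1/n\le|x-y|<l_0$, concavity of $\psi_n$ yields
$$\Theta_0(\psi_n)(x,y)\le \tfrac12 J(\kappa\wedge|x-y|)(\kappa\wedge|x-y|)^2\,\psi_n''(|x-y|),$$
and condition (iii') is exactly what is needed to make this negative second-derivative contribution --- computed from $\psi_n''=-c_2 g'(r)e^{-c_2g(r)}h_n(r)+e^{-c_2g(r)}h_n'(r)$ --- dominate the $K_1\psi_n'$ term and then a multiple of $\psi_n$ itself after possibly enlarging $c_2$.

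The main obstacle lies exactly at the crossover $r\to 0$: because (ii') only gives the uniform bound $K_1$ rather than a vanishing $\Phi_1(|x-y|)$, the $W_1$-style integrand $\int_0^r e^{-c_2 g(s)}\,ds$ alone cannot absorb a divergent $K_1/s$ contribution, which is why the recipe of Theorem \ref{thtpw} fails verbatim. The cut-off $h_n$ circumvents this by requiring the drift inequality only on $[1/n,n]$ and by allowing $\psi_n'$ to develop a concentrated mass of total weight $c_0$ near $0$; this produces the jump in $\psi_\infty$ and, via the argument above, the extra $(1+|x-y|)$ factor in the TV bound. The delicate bookkeeping is to verify that $\lambda$ and $C_0$ stay bounded as $n\to\infty$ --- in particular that the term $e^{-c_2 g(r)}h_n'(r)$ contributed to $\psi_n''$ remains small enough not to spoil concavity near $1/n$ --- so that Proposition \ref{P:rec} delivers a genuine exponential rate in the limit.
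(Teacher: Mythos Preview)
Your overall strategy is the correct one and matches the paper's (which simply reduces \eqref{p:est1} with $R=\infty$ to
\[
\widetilde L f(|x-y|)\le \Theta_0(f)(x,y)+f'(|x-y|)\Big(\tfrac{\langle b(x)-b(y),x-y\rangle}{|x-y|}+\|\sigma(x)-\sigma(y)\|_{\mathrm{H.S.}}(A_1+A_2)\Big)
\]
and then invokes the construction of \cite[Theorem 4.4]{Lwcw}). However, your explicit choice of $\psi_n$ does not do what you claim, and this is a genuine gap.

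With $h_n$ vanishing on $[0,1/(2n)]$ and equal to $1$ on $[1/n,\infty)$, dominated convergence gives for every fixed $r>0$
\[
\psi_n(r)=c_1 r+\int_0^r e^{-c_2 g(s)}h_n(s)\,ds\;\longrightarrow\; c_1 r+\int_0^r e^{-c_2 g(s)}\,ds,
\]
which is continuous at $0$ with value $0$. There is no jump, so $\psi_\infty$ does \emph{not} dominate $c_0\I_{(0,\infty)}$ and the passage to total variation fails. Your cut-off suppresses derivative mass near $0$; what is needed is the opposite, namely that $\psi_n$ climb rapidly from $0$ to a level $\sim c_0$ on $[0,1/n]$ (e.g.\ $\psi_n=a\phi_n+\text{(Wasserstein part)}$ with $\phi_n$ a concave $C^2$ approximation of $\I_{(0,\infty)}$).

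This also changes the mechanism that makes $\widetilde L\psi_n\le -\lambda\psi_n$ work for small $r$. With the correct construction, on $1/n\le r\le \kappa$ one has $\psi_n(0)=0$ while $\psi_n(r)\approx a+\cdots$, so the \emph{finite second difference}
\[
\psi_n(2r)+\psi_n(0)-2\psi_n(r)\approx -a+\big[\psi_n(2r)-\psi_n(r)\big]
\]
contributes a term of order $-\tfrac{a}{2}\mu_\Psi(\R^d)\le -\tfrac{a}{2}J(r)$ to $\Theta_0(\psi_n)$, which by (iii') is large and negative and easily absorbs the constant $K_1\psi_n'(r)$. Your attempt to control this via the pointwise bound $\Theta_0(\psi_n)\le \tfrac12 J(\kappa\wedge r)(\kappa\wedge r)^2\psi_n''(r)$ cannot succeed with your $\psi_n$: that bound requires $\psi_n'''\ge 0$, and moreover on $[1/(2n),1/n]$ you have $\psi_n''(r)=-c_2 g'(r)e^{-c_2 g(r)}h_n(r)+e^{-c_2 g(r)}h_n'(r)$ with $h_n'\sim n$, so concavity is in jeopardy exactly where you need it.

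In short: keep your reduction and the appeal to Proposition~\ref{P:rec}, but replace the cut-off $h_n$ (which kills mass near $0$) by an additive concave bump $a\phi_n$ that \emph{adds} mass near $0$; then the jump in $\psi_\infty$ and the drift inequality both follow, exactly as in \cite[Theorem 4.4]{Lwcw}.
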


\begin{proof}[Sketch of the proofs of Theorems $\ref{thtpw}$ and $\ref{thtvart}$] As we mentioned before, since \eqref{e:ffgg} holds, one can take $R=\infty$ in the estimate \eqref{p:est1}. Under assumptions on the diffusion
coefficient $\sigma(x)$ and condition {\rm (i)}, \eqref{p:est1} is
further reduced into \begin{align*}\widetilde{L}f(|x-y|)\le&
\Theta_0(f)(x,y)\\
&+ f'(|x-y|)\left( \frac{\langle b(x)-b(y),
x-y\rangle}{|x-y|}+\|\sigma(x)-\sigma(y)\|_{\mathrm{H.S.}}(A_1+A_2)\right).\end{align*}
Then, one can use the estimate above and follow arguments of
\cite[Theorem 4.2 and 4.4]{Lwcw} to prove required conclusions.
\end{proof}

\section{Proofs and examples}

Recall from \eqref{e:psi} that, for any $\kappa>0$ and $x,y\in \R^d$,
$$\Psi(z)=\Psi_{\kappa,x,y}(z)=\sigma(y)^{-1}\big(\sigma(x)z+(x-y)_\kappa\big),$$ where $(x-y)_\kappa=\big(1\wedge
\frac{\kappa}{|x-y|}\big)(x-y)$. Hence,
$$\Psi^{-1}(z)=\sigma(x)^{-1}\big(\sigma(y)z-(x-y)_\kappa\big).$$

\subsection{Estimates related to L\'evy measures}

To prove Theorems \ref{T:thm1}
and \ref{T:222}, we need both lower bound and upper bound for $$J(r)=\inf_{x,y\in\R^d:|x-y|\le r}\mu_\Psi(\R^d)=\inf_{x,y\in\R^d:|x-y|\le r}\mu_{\Psi^{-1}}(\R^d),$$ where $\mu_\Psi(\R^d)=(\nu\wedge (\nu\Psi))(\R^d)=(\nu\wedge (\nu\Psi^{-1}))(\R^d)=\mu_{\Psi^{-1}}(\R^d)$.
\begin{proposition}\label{L:low1}Suppose that there are $0<\varepsilon_1,\varepsilon_2\le 1$  and $c_0>0$ such that
$$\nu(dz)\ge \I_{\{-\varepsilon_1<z_1< \varepsilon_2\}}
\frac{c_0}{|z|^{d+\alpha}}\,dz.$$ Then, there exists a constant $c_1>0$ such that for all $0<r<\kappa$ small enough,
$$J(r)\ge c_1r^{-\alpha}.$$ \end{proposition}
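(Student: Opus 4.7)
The plan is to pass from the full L\'evy measure $\nu$ to its explicit absolutely continuous lower bound $\tilde\nu(dz) := \I_{\{-\varepsilon_1 < z_1 < \varepsilon_2\}}(c_0/|z|^{d+\alpha})\,dz$ and estimate $\tilde\nu \wedge (\tilde\nu \Psi)$ directly. Since $\nu \ge \tilde\nu$ as measures, one has $\nu\Psi \ge \tilde\nu\Psi$, and monotonicity of the infimum of two measures in each argument gives $\mu_\Psi \ge \tilde\nu \wedge (\tilde\nu\Psi)$. Writing $\Psi(w) = A_0 w + v$ with $A_0 := \sigma(y)^{-1}\sigma(x)$ and $v := \sigma(y)^{-1}(x-y)$ (using $r < \kappa$, so $(x-y)_\kappa = x-y$), the change-of-variables formula yields that $\tilde\nu\Psi$ is absolutely continuous with density $g(w) = |\det A_0|\,f(\Psi(w))$, where $f$ denotes the density of $\tilde\nu$. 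Consequently $(\tilde\nu \wedge \tilde\nu\Psi)(dw) = \min(f(w),g(w))\,dw$, and it suffices to integrate this pointwise minimum over a well-chosen region.

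Next I would localize on the annulus $D_r := \{w \in \R^d : r \le |w| \le 2r\}$. From \eqref{p:est0}, uniformly in $x,y$ we have $|A_0 w| \le \Lambda^2 |w|$, $|v| \le \Lambda r$, and $|\det A_0| \ge \Lambda^{-2d}$. Hence for $w \in D_r$ we have $|\Psi(w)| \le (2\Lambda^2 + \Lambda)r$. Provided $r$ is small enough that both $2r$ and $(2\Lambda^2 + \Lambda)r$ lie below $\min(\varepsilon_1,\varepsilon_2)$, each $w \in D_r$ satisfies $-\varepsilon_1 < w_1 < \varepsilon_2$ and $-\varepsilon_1 < (\Psi(w))_1 < \varepsilon_2$, so both $f$ and $g$ are given by their explicit formulas on $D_r$ and are bounded below by constants times $r^{-(d+\alpha)}$, namely $f(w) \ge c_0 (2r)^{-(d+\alpha)}$ and $g(w) \ge \Lambda^{-2d} c_0\bigl((2\Lambda^2+\Lambda)r\bigr)^{-(d+\alpha)}$.

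Since the Lebesgue volume of $D_r$ is comparable to $r^d$, integrating the pointwise minimum gives $(\tilde\nu \wedge \tilde\nu\Psi)(\R^d) \ge c_1 r^{-\alpha}$ with $c_1 > 0$ depending only on $c_0, d, \alpha, \Lambda, \varepsilon_1, \varepsilon_2$. These constants are uniform in the pair $(x,y)$, so taking the infimum over $|x-y| \le r$ yields $J(r) \ge c_1 r^{-\alpha}$. The only technicality to handle is the truncation in the definition \eqref{e:coumea} of $\mu_\Psi$ via the measures $\nu_n$; but this is harmless because $D_r$ sits at distance $\ge r$ from the origin, so $D_r \subset \{|z| > 1/n\}$ for all $n$ large, and the lower bound survives the $\limsup$ in \eqref{e:coumea}. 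There is no serious obstacle here: the computation is essentially elementary once the affine parametrization $\Psi(w) = A_0 w + v$ is exploited, and the only care needed is to verify that the affine image $\Psi(D_r)$ still lies inside the cone $\{-\varepsilon_1 < z_1 < \varepsilon_2\}$ where the density lower bound holds, which dictates the smallness condition on $r$.
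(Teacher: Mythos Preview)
Your proof is correct and considerably simpler than the paper's. Both arguments reduce to the absolutely continuous lower bound $\tilde\nu$ and compute a pointwise minimum of two explicit densities, but the integration strategies differ.

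You integrate over the small annulus $D_r=\{r\le|w|\le 2r\}$. Since both $|w|$ and $|\Psi(w)|$ are then of order $r$, choosing $r$ small forces the slab condition $-\varepsilon_1<(\cdot)_1<\varepsilon_2$ to hold automatically for both $w$ and $\Psi(w)$; the indicator drops out and you are left with a pure scaling computation. The paper instead integrates over a region of fixed outer radius (roughly $\{c|x-y|<|z|<c'\varepsilon_2\}$), where the slab constraint is no longer automatic. To handle it, the paper splits into two cases according to the sign of $(\sigma(x)^{-1}(x-y))_1$, restricts to half-spaces like $\{z_1>0\}$, and invokes the Lipschitz continuity of $\sigma$ to propagate the first-coordinate bounds from $z$ to $\sigma(x)^{-1}\sigma(y)z$ via the smallness of $\kappa$.

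Your route is more direct for the statement as written (both $\varepsilon_1,\varepsilon_2>0$). The paper's more elaborate sign-splitting argument buys extra mileage: it is structured so that one can ``follow the argument'' to prove Corollary~\ref{C:up1}(2), the one-sided case $\nu(dz)\ge\I_{\{0<z_1<\varepsilon\}}c_0|z|^{-d-\alpha}\,dz$ with diagonal $\sigma$, where your small-annulus trick would fail because half of $D_r$ has $w_1<0$. So the paper's complexity is an investment toward that corollary, not a necessity for Proposition~\ref{L:low1} itself.

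One minor point: your remark that ``$D_r\subset\{|z|>1/n\}$ for $n$ large'' handles the truncation of $f$ but not of $g$, since $\Psi(D_r)$ may contain the origin. This is harmless because $\tilde\nu_n\uparrow\tilde\nu$ implies $\tilde\nu_n\wedge(\tilde\nu_n\Psi)$ is increasing in $n$ and monotone convergence recovers the full density bound; it would be worth saying this explicitly.
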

\begin{proof}
Let $\nu_0(dz)=q(z)\,dz,$ where $$q(z)=\I_{\{-\varepsilon_1<z_1<
\varepsilon_2\}} \frac{c_0}{|z|^{d+\alpha}}.$$ Then,
$(\nu_0\Psi^{-1})(dz)= q_{\Psi}(z)\,dz,$ where
$$q_\Psi(z)=\I_{\{-\varepsilon_1<\Psi^{-1}(z)_1< \varepsilon_2\}}
\frac{c_0|C_\Psi(x,y)|}{\,\,|\Psi^{-1}(z)|^{d+\alpha}}$$ and $C_\Psi(x,y)$ is the
determinant of $\sigma(x)^{-1}\sigma(y)$, i.e., the Jacobian matrix
corresponding to the transformation $\Psi^{-1}(z)\mapsto z$. Hence,
  $$q(z)\wedge q_\Psi(z)\ge c_0^*\I_{\{-\varepsilon_1<z_1< \varepsilon_2, -\varepsilon_1<\Psi^{-1}(z)_1< \varepsilon_2\}} \bigg(\frac{1}{|z|^{d+\alpha}} \wedge
  \frac{1}{|\Psi^{-1}(z)|^{d+\alpha}}\bigg),$$ where $c^*_0:=c_0(1\wedge\, \inf_{x,y\in\R^d}|C_\Psi(x,y)|)>0$.

In the following,
we consider $x,y\in\R^d$ such that $|x-y|\leq \kappa\wedge
(\varepsilon_2/(4\Lambda^5))\wedge(\varepsilon_1/(4\Lambda))$, where
$\kappa>0$ is small enough such that
\begin{itemize}
\item[(i)] for
all $|z|\le \varepsilon_1+\varepsilon_2$ with $-\varepsilon_1/2<z_1<\varepsilon_2/2$, it holds
$-\varepsilon_1<(\sigma(x)^{-1}\sigma(y)z)_1<\varepsilon_2$;
\item[(ii)] for all $|z|\le \Lambda^2( \varepsilon_1+\varepsilon_2)$ with
$-\varepsilon_1/2<(\sigma(x)^{-1}\sigma(y)z)_1<3\varepsilon_2/4$, it holds true that $-\varepsilon_1<z_1<\varepsilon_2.$\end{itemize} These two properties above are ensured by the
boundedness and the globally  Lipschitz continuity of $\sigma(x)$.

If $(\sigma(x)^{-1}(x-y))_1\le 0$, then
  \begin{align*} q(z)\wedge q_\Psi(z)&\geq c^*_0\I_{\{-\varepsilon_1<z_1<\varepsilon_2, -\varepsilon_1<\Psi^{-1}(z)_1< \varepsilon_2\}} \bigg(\frac{1}{|z|^{d+\alpha}} \wedge
  \frac{1}{|\Psi^{-1}(z)|^{d+\alpha}}\bigg)\\
  &\ge c_0^*\I_{\{-\varepsilon_1/2<z_1< \varepsilon_2/2,  (\sigma(x)^{-1}\sigma(y)z)_1<
 \varepsilon_2+(\sigma(x)^{-1}(x-y))_1,|\sigma(x)^{-1}(x-y)|<|\sigma(x)^{-1}\sigma(y)z|\}}\\
 &\quad \times\bigg(\frac{1}{|z|^{d+\alpha}} \wedge   \frac{1}{(2|\sigma(x)^{-1}\sigma(y)z|)^{d+\alpha}}\bigg)\\
  &\geq \frac{c_0^*}{(2\Lambda^2)^{d+\alpha}}\I_{\{0<z_1< \varepsilon_2/2,
  |\sigma(x)^{-1}(x-y)|<|\sigma(x)^{-1}\sigma(y)z|<\varepsilon_2-|\sigma(x)^{-1}(x-y)|\}}\frac{1}{|z|^{d+\alpha}}\\
  &\ge  \frac{c_0^*}{(2\Lambda^2)^{d+\alpha}}\I_{\{z_1>0, \Lambda^3|x-y|<|z|<\varepsilon_2/(2\Lambda^2)\}}\frac{1}{|z|^{d+\alpha}}.
  \end{align*}
Thus, denoting by $S^{d-1}_+ =\{\theta\in \R^d: |\theta|=1 \mbox{
and } \theta_1>0\}$ the half sphere and $\sigma( d\theta)$ the
spherical measure, we have
  \begin{align*}
  \int_{\R^d} q(z)\wedge q_\Psi(z)\, dz &\geq \frac{c_0^*}{(2\Lambda^2)^{d+\alpha}} \int_{\{z_1>0,\Lambda^3|x-y|< |z|<\varepsilon_2/(2\Lambda^2)\}} \frac{1}{|z|^{d+\alpha}}\, dz\\
  &= \frac{c_0^*}{(2\Lambda^2)^{d+\alpha}} \int_{\Lambda^3|x-y|}^{\varepsilon_2/(2\Lambda^2)} r^{d-1}\, dr \int_{S^{d-1}_+}\frac{\sigma(d\theta)}{|r\theta|^{d+\alpha}}\\
  &= \frac{c_0^*\omega_d}{2(2\Lambda^2)^{d+\alpha}}\bigg( \frac1{(\Lambda^3|x-y|)^\alpha}- \frac1{(\varepsilon_2/(2\Lambda^2))^\alpha}\bigg),
  \end{align*}
where $\omega_d=\sigma(S_+^{d-1})$ is the area of the sphere. Noticing
that $|x-y|\leq \varepsilon_2/(4\Lambda^5)$, we further get that
$$
  \int_{\R^d} q(z)\wedge q_\Psi(z)\, dz\geq \frac{c_0^*\omega_d}{2^{1+d+\alpha}\Lambda^{2d+5\alpha}} \bigg(1-\frac{1}{2^\alpha}\bigg)\frac1{|x-y|^\alpha} .
$$

Next, we follow the argument above to consider the case that $(\sigma(x)^{-1}(x-y))_1> 0$. Note that
$$z=\sigma(y)^{-1}\sigma(x)(\Psi^{-1}(z)+\sigma(x)^{-1}(x-y)_\kappa).$$ In this case, it holds
 \begin{align*}&q(z)\wedge q_\Psi(z)\\
  &\geq\frac{ c^*_0}{\Lambda^{2(d+\alpha)}}\I_{\{-\varepsilon_1<z_1<\varepsilon_2, -\varepsilon_1<\Psi^{-1}(z)_1< \varepsilon_2\}}
  \frac{1}{(|\Psi^{-1}(z)|+|\sigma(x)^{-1}(x-y)|)^{d+\alpha}} \\
  &\ge \frac{ c^*_0}{\Lambda^{2(d+\alpha)}}\I_{\{-\varepsilon_1/2<\Psi^{-1}(z)_1<\varepsilon_2/2,|\sigma(x)^{-1}(x-y)|<|\Psi^{-1}(z)|\}}\frac{1}{(2|\Psi^{-1}(z)|)^{d+\alpha}}\\
     &\ge \frac{c_0^*}{(2\Lambda^2)^{d+\alpha}}\I_{\{0<\Psi^{-1}(z)_1<\varepsilon_2/2,|\sigma(x)^{-1}(x-y)|<|\Psi^{-1}(z)|\}} \frac{1}{|\Psi^{-1}(z)|^{d+\alpha}}\\
     &\ge   \frac{c_0^*}{(2\Lambda^2)^{d+\alpha}}\I_{\{\Psi^{-1}(z)_1>0,|\sigma(x)^{-1}(x-y)|<|\Psi^{-1}(z)|<\varepsilon_2/2\}} \frac{1}{|\Psi^{-1}(z)|^{d+\alpha}}\\
     &\ge   \frac{c_0^*}{(2\Lambda^2)^{d+\alpha}}\I_{\{\Psi^{-1}(z)_1>0,\Lambda|x-y|<|\Psi^{-1}(z)|<\varepsilon_2/2\}} \frac{1}{|\Psi^{-1}(z)|^{d+\alpha}}.
  \end{align*}
Hence, we arrive at
  \begin{align*}
  \int_{\R^d} q(z)\wedge q_\Psi(z)\, dz
    &\geq \frac{c_1^{*}}{(2\Lambda^2)^{d+\alpha}} \int_{\{z_1>0,\Lambda|x-y|\leq |z|\leq \varepsilon_2/2\}} \frac{1}{|z|^{d+\alpha}}\, dz\\
  &= \frac{c_1^{*}}{(2\Lambda^2)^{d+\alpha}} \int_{\Lambda|x-y|}^{\varepsilon_2/2} r^{d-1}\, dr \int_{S^{d-1}_+}\frac{\sigma(d\theta)}{|r\theta|^{d+\alpha}}\\
  &= \frac{c_1^{*}\omega_d}{2(2\Lambda^2)^{d+\alpha}}\bigg( \frac1{(\Lambda|x-y|)^\alpha}- \frac1{(\varepsilon_1/2)^\alpha}\bigg),
  \end{align*}
where $c^{*}_1:=c_0(1\wedge\, \inf_{x,y\in\R^d}|C_\Psi(x,y)|)/(\sup_{x,y\in\R^d}|C_\Psi(x,y)|)>0$. Since $|x-y|\leq \varepsilon_1/(4\Lambda)$, we further get that
$$
  \int_{\R^d} q(z)\wedge q_\Psi(z)\, dz\geq \frac{c_1^*\omega_d}{2^{1+d+\alpha}\Lambda^{2d+3\alpha}} \bigg(1-\frac{1}{2^\alpha}\bigg)\frac1{|x-y|^\alpha} .
$$

 Therefore, we conclude that for all $0<r\le \kappa\wedge
(\varepsilon_2/(4\Lambda^5))\wedge(\varepsilon_1/(4\Lambda))$,
$$J(r)\ge \inf_{x,y\in \R^d: |x-y|\le r}  \int_{\R^d} q(z)\wedge q_\Psi(z)\, dz\ge c_2 r^{-\alpha},$$ which finishes the proof.
\end{proof}

A close inspection of the proof above shows the following statement. 

 \begin{corollary}\label{C:up1} \begin{itemize}\item[(1)]  If there are $0<\varepsilon\le 1$  and $c_0>0$ such that
$$\nu(dz)\ge \I_{\{|z|< \varepsilon\}}
\frac{c_0}{|z|^{d+\alpha}}\,dz,$$ then there exists a constant $c_1>0$ such that for all $0<r<\kappa$ small enough,
$$J(r)\ge c_1r^{-\alpha}.$$

\item[ (2)] Suppose that $\sigma(x)=(\sigma_{i,j}(x))_{d\times d}$ is diagonal, i.e.,\, $\sigma_{i,j}(x)=0$ for all $x\in \R^d$ and $1\le i\neq j\le d$. If there are $0<\varepsilon\le 1$  and $c_0>0$ such that
$$\nu(dz)\ge \I_{\{0<z_1< \varepsilon\}}
\frac{c_0}{|z|^{d+\alpha}}\,dz,$$ then there exists a constant $c_1>0$ such that for all $0<r<\kappa$ small enough,
$$J(r)\ge c_1r^{-\alpha}.$$ \end{itemize}\end{corollary}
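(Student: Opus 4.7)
The plan is to prove both parts by mimicking the proof of Proposition \ref{L:low1}, observing that the only place where the assumption ``$\nu(dz)\geq \I_{\{-\varepsilon_1<z_1<\varepsilon_2\}}c_0|z|^{-d-\alpha}\,dz$'' was really used was in controlling the support of the common dominating density $q(z)\wedge q_\Psi(z)$. In both cases we will set $\nu_0(dz) = q(z)\,dz$ with $q$ being the lower bound provided by the hypothesis and verify that, for $|x-y|$ small enough relative to $\kappa$, $\varepsilon$ and $\Lambda$, the set on which $q(z)\wedge q_\Psi(z)$ admits a pointwise lower bound of the form $c/|z|^{d+\alpha}$ still contains an annular region whose inner radius is of order $|x-y|$ and whose outer radius is of order $\varepsilon$. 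Integrating then yields the $r^{-\alpha}$ bound exactly as before.

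For part (1), the argument is actually strictly simpler than in Proposition \ref{L:low1}, because the ball indicator $\I_{\{|z|<\varepsilon\}}$ is invariant under reflection. I would bound
\[q(z)\wedge q_\Psi(z)\ge c_0^*\I_{\{|z|<\varepsilon,\,|\Psi^{-1}(z)|<\varepsilon\}}\bigl(|z|^{-d-\alpha}\wedge|\Psi^{-1}(z)|^{-d-\alpha}\bigr),\]
then use $|\Psi^{-1}(z)|\leq \Lambda^2|z|+\Lambda|x-y|$ together with the reverse inequality $|\Psi^{-1}(z)|\geq \Lambda^{-2}|z|-\Lambda|x-y|$ (both coming from the uniform non-degeneracy \eqref{p:est0}) to show that both constraints are satisfied whenever, say, $2\Lambda^2|x-y|<|z|<\varepsilon/(2\Lambda^2)$. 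Integrating $1/|z|^{d+\alpha}$ over this full spherical shell (no half-sphere required) gives the desired lower bound $c_1|x-y|^{-\alpha}$ for all $|x-y|$ small enough. No case split on the sign of $(\sigma(x)^{-1}(x-y))_1$ is needed here.

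For part (2), I would exploit the diagonal structure of $\sigma(x)$ to rewrite
\[(\Psi^{-1}(z))_1 = \frac{\sigma_{11}(y)}{\sigma_{11}(x)}\,z_1 - \frac{(x_1-y_1)_\kappa}{\sigma_{11}(x)},\]
which, unlike the general Lipschitz case, does not mix in the other coordinates of $z$. Then I would split into two cases according to the sign of $x_1-y_1$, exactly as in Proposition \ref{L:low1}. When $x_1-y_1\leq 0$, any $z$ with $z_1>0$ automatically satisfies $(\Psi^{-1}(z))_1>0$, and the shell argument proceeds as in part (1) with the half-sphere $S^{d-1}_+$ in place of the full sphere. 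When $x_1-y_1>0$, one writes $z=\sigma(y)^{-1}\sigma(x)(\Psi^{-1}(z)+\sigma(x)^{-1}(x-y)_\kappa)$ and forces $(\Psi^{-1}(z))_1>0$ by restricting to $|\Psi^{-1}(z)|>\Lambda|x-y|$, mirroring the second half of the proof of Proposition \ref{L:low1}.

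The main technical obstacle is ensuring in part (2) that the outer radius of the usable shell is uniform in $x,y$; this uses the Lipschitz continuity of $\sigma$ to guarantee (for $|x-y|$ small) the two compatibility conditions (i) and (ii) listed in the proof of Proposition \ref{L:low1} for the perturbed half-space $\{-\varepsilon/2<z_1<\varepsilon/2\}$ replaced by $\{0<z_1<\varepsilon/2\}$. Once this is established, the spherical-coordinate computation $\int_{c|x-y|}^{c'\varepsilon}r^{-1-\alpha}\,dr\sim|x-y|^{-\alpha}$ closes both parts, and no further ingredients are needed beyond those already used in Proposition \ref{L:low1}.
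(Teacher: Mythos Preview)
Your proposal is correct and matches the paper's own approach, which simply refers back to the proof of Proposition~\ref{L:low1}; the paper's only additional remark for part~(2) is precisely your observation that diagonality of $\sigma$ gives $z_1>0$ if and only if $(\sigma(x)^{-1}\sigma(y)z)_1>0$, which is exactly what your coordinate formula $(\Psi^{-1}(z))_1 = \tfrac{\sigma_{11}(y)}{\sigma_{11}(x)}\,z_1 - \tfrac{((x-y)_\kappa)_1}{\sigma_{11}(x)}$ encodes. One small clarification in your case $x_1-y_1>0$: the restriction $|\Psi^{-1}(z)|>\Lambda|x-y|$ does not by itself force $(\Psi^{-1}(z))_1>0$; rather, as in Proposition~\ref{L:low1}, you integrate over the half-shell $\{\Psi^{-1}(z)_1>0,\ \Lambda|x-y|<|\Psi^{-1}(z)|<\varepsilon/2\}$, where the half-space constraint (together with diagonality) guarantees $z_1>0$, and the inner radius is what controls the density comparison.
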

\begin{proof}One can easily obtain (1) from the proof of Proposition \ref{L:low1}. For (2), we note that, when $\sigma(x)$ is diagonal, $z_1>0$ if and only if $(\sigma(x)^{-1}\sigma(y)z)_1>0$ for all $x,y\in \R^d$. Then, we also can follow the argument of Proposition \ref{L:low1} to get the desired assertion. \end{proof}

Next, we consider some upper bounds related to $\mu_\Psi$.

\begin{proposition}\label{L:up}\begin{itemize}
\item[(1)] Let $$\nu(dz)\le \frac{c_0}{|z|^{d+\alpha}}\,dz$$ for some $c_0>0$.  Then,
there exists a constant $c_1>0$ $($independent of $\kappa$$)$ such
that
$$\mu_\Psi(\R^d)\le c_1(|x-y|\wedge\kappa)^{-\alpha}.$$
\item[(2)] Let $$\nu(dz)\le \frac{c_0}{|z|^{d+\alpha}}\I_{\{|z_1|\le \eta\}}\,dz$$ for some $\eta, c_0>0$.  Then,
there exists a constant $c_2>0$ $($independent of $\kappa$ and
$\eta$$)$ such that
 $$\int_{\R^d}|z|\,(\mu_\Psi+\mu_{\Psi^{-1}})(dz)\le c_2 \begin{cases}
  (|x-y|\wedge \kappa)^{1-\alpha}+\eta^{1-\alpha},& \alpha\in(0,1),\\
   \log\left(\frac{\eta}{|x-y|\wedge \kappa}\right),& \alpha=1,\\
 (|x-y|\wedge \kappa)^{1-\alpha}, &
 \alpha\in(1,2).
 \end{cases}$$ In particular, the estimate above holds if $$\nu(dz)\le \frac{c_0}{|z|^{d+\alpha}}\I_{\{|z|\le \eta\}}\,dz$$ for some $\eta, c_0>0$. \end{itemize}
\end{proposition}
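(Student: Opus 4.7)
The plan is to exploit the pointwise Lebesgue-density bound
\[
\mu_\Psi(dz) \le \bigl(q(z) \wedge q(\Psi(z))\,|\det A|\bigr)\,dz,
\]
where $q$ is the density of $\nu$ and $A=\sigma(y)^{-1}\sigma(x)$ is the linear part of the affine map $\Psi(z)=Az+c$ with $c=\sigma(y)^{-1}(x-y)_\kappa$. By the uniform nondegeneracy \eqref{p:est0}, all of $\|A\|$, $\|A^{-1}\|$, $|\det A|^{\pm 1}$ are controlled by $\Lambda$, and $|c|\asymp r:=|x-y|\wedge\kappa$. The bounds for $\mu_{\Psi^{-1}}$ follow by interchanging the roles of $\Psi$ and $\Psi^{-1}$, or via the change-of-variables identity $\mu_{\Psi^{-1}}=\mu_\Psi\Psi^{-1}$ from Lemma~\ref{C:mea}(1).

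The key geometric observation is: for $|z|\le r/(2\Lambda^2)$ the triangle inequality gives $|\Psi(z)|\ge|c|-\|A\||z|\gtrsim r$, so $q(\Psi(z))|\det A|\lesssim r^{-d-\alpha}$; for $|z|>r/(2\Lambda^2)$ we simply use $q(z)\lesssim|z|^{-d-\alpha}$. Splitting the total-mass integral at $|z|=r/(2\Lambda^2)$ and using each bound in its range then gives part~(1) in one line: the inner piece contributes $r^d\cdot r^{-d-\alpha}=r^{-\alpha}$ and the outer piece $\int_r^\infty s^{-1-\alpha}\,ds\asymp r^{-\alpha}$.

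For part~(2), I would run the same split now weighted by $|z|$. The inner piece is bounded by $r^{-d-\alpha}\cdot r^{d+1}=r^{1-\alpha}$. For the outer piece, I would activate the cutoff $\I_{|z_1|\le\eta}$ coming from the density hypothesis, pass to spherical coordinates $z=s\omega$, and use the sharp angular estimate $\sigma(\{\omega\in S^{d-1}:|\omega_1|\le t\})\lesssim t\wedge 1$ to reduce matters to
\[
\int_{|z|>r,\,|z_1|\le\eta}|z|^{1-d-\alpha}\,dz \lesssim \int_r^\infty s^{-\alpha}\bigl((\eta/s)\wedge 1\bigr)\,ds.
\]
Splitting the $s$-integral at $s=\eta$ produces the three announced cases: $\int_r^\eta s^{-\alpha}\,ds$ gives $\eta^{1-\alpha}$ for $\alpha\in(0,1)$, the logarithm $\log(\eta/r)$ for $\alpha=1$, and $r^{1-\alpha}$ for $\alpha\in(1,2)$, while $\eta\int_\eta^\infty s^{-1-\alpha}\,ds\lesssim\eta^{1-\alpha}$ is a lower-order tail. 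Combining with the inner-piece bound $r^{1-\alpha}$ yields the stated estimates. The ``In particular'' clause is then immediate, since the stronger cutoff $\I_{|z|\le\eta}$ implies $\I_{|z_1|\le\eta}$.

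The main obstacle will be the borderline case $\alpha=1$, where one must be careful to extract a logarithm rather than a diverging power; a secondary technical point is that the density bound above applies directly to the approximations $\mu_{n,\Psi}$ in \eqref{e:coumea}, but since all of the estimates are uniform in $n$, passing $n\to\infty$ through the $\limsup$ is routine.
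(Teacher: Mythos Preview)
Your argument is correct. For part~(1) it is essentially the paper's proof: both of you split at $|z|\asymp r:=|x-y|\wedge\kappa$, bound the inner region using $|\Psi(z)|\gtrsim r$, and integrate the tail $|z|^{-d-\alpha}$ directly.

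For part~(2) the overall split is again the same, but the treatment of the outer piece
\[
\int_{\{|z|\gtrsim r,\ |z_1|\le\eta\}}|z|^{1-d-\alpha}\,dz
\]
differs genuinely. The paper uses a \emph{slab/Fubini} decomposition: it writes $z=(z_1,\tilde z_{d-1})$, integrates over $\tilde z_{d-1}$ in polar coordinates for each fixed $z_1\in(-\eta,\eta)$, and is led to the one-dimensional integral $\int_{c(|x-y|\wedge\kappa)}^{\eta}s^{-\alpha}\,ds$ plus a remainder of order $r^{1-\alpha}$. You instead pass to full spherical coordinates $z=s\omega$ and invoke the equatorial-band estimate $\sigma\{\omega\in S^{d-1}:|\omega_1|\le t\}\lesssim t\wedge 1$, reducing to $\int_r^\infty s^{-\alpha}\bigl((\eta/s)\wedge 1\bigr)\,ds$. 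Both routes land on the same trichotomy in~$\alpha$; your version is a bit more geometric and dimension-uniform, while the paper's slab computation is slightly more elementary (no spherical-cap lemma needed) but produces a mild notational clash with the two uses of the symbol~$r$. Either way the $\alpha=1$ case is handled identically, with the logarithm coming from $\int_r^\eta s^{-1}\,ds$ and the $O(1)$ contributions from the inner ball and the tail $s>\eta$ being absorbed, so your caution there is well placed but not an obstruction.
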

\begin{proof} (1)
Let $\nu_0(dz)=q(z)\,dz$, where $$q(z)=
\frac{c_0}{|z|^{d+\alpha}}.$$   Note that if $|z|\le
(2\Lambda^{3})^{-1}(|x-y|\wedge
  \kappa)$, then \begin{align*}|\Psi(z)|=&|\sigma(y)^{-1}\sigma(x)z+\sigma(y)^{-1}(x-y)_\kappa|\\
  \ge & |\sigma(y)^{-1}(x-y)_\kappa|-|\sigma(y)^{-1}\sigma(x)z| \ge (2\Lambda) ^{-1}(|x-y|\wedge
  \kappa).\end{align*} Thus,
  \begin{align*}(\nu_0\wedge (\nu_0\Psi))(\R^d)&\le \int_{\{|z|\ge
 (2\Lambda^{3})^{-1}(|x-y|\wedge \kappa)\}} \,\nu_0(dz)+\int_{\{|z|\le (2\Lambda^{3})^{-1}(|x-y|\wedge
  \kappa)\}}\,(\nu_0\Psi)(dz)\\
  &\le \int_{\{|z|\ge
 (2\Lambda^{3})^{-1}(|x-y|\wedge \kappa)\}}  \frac{c_0}{|z|^{d+\alpha}}\,dz+\int_{\{\Psi(z): |z|\le(2\Lambda^{3})^{-1}(|x-y|\wedge
  \kappa)\}}\,\nu_0(dz)\\
  &\le \int_{\{|z|\ge
 (2\Lambda^{3})^{-1}(|x-y|\wedge \kappa)\}}  \frac{c_0}{|z|^{d+\alpha}}\,dz+\int_{\{|z|\ge(2\Lambda)^{-1}(|x-y|\wedge
  \kappa)\}}\frac{c_0}{|z|^{d+\alpha}}\,dz\\
  &\le c_1(|x-y|\wedge \kappa)^{-\alpha}.\end{align*} This along with the fact that $$\mu_\Psi=\nu\wedge (\nu\Psi)\le \nu_0\wedge (\nu_0\Psi) $$ proves
  the assertion (1).

(2) Let $\nu_0(dz)=q(z)\,dz$, where $$q(z)=
\frac{c_0}{|z|^{d+\alpha}}\I_{\{|z_1|\le \eta\}}.$$ Then,
\begin{align*}&\int_{\R^d} |z|\,(\nu_0\wedge (\nu_0\Psi))(dz)\\
&\le \int_{\{ |z_1|\le \eta, (2\Lambda^{3})^{-1}(|x-y|\wedge
\kappa)\le |z|\}}|z| \,\nu_0(dz)+\int_{\{|z|\le
(2\Lambda^{3})^{-1}(|x-y|\wedge
  \kappa)\}}\,|z|(\nu_0\Psi)(dz)\\
  &\le \int_{\{ |z_1|\le \eta,
(2\Lambda^{3})^{-1}(|x-y|\wedge \kappa)\le |z|\}}|z|
\,\nu_0(dz)\\
 &\quad+(2\Lambda^{3})^{-1}(|x-y|\wedge
  \kappa)\int_{\{|z|\le (2\Lambda^{3})^{-1}(|x-y|\wedge
  \kappa)\}}\,(\nu_0\Psi)(dz)\\
  &=:I_1+I_2.\end{align*}
Following the argument in (1), we know that there is a constant
$c_3>0$ such that for all $x,y\in \R^d$,
$$I_2\le c_3 (|x-y|\wedge \kappa)^{1-\alpha}.$$
For $I_1$, we denote by $\tilde x_{d-1}=(x_2,\cdots,x_d)$ for any
$x=(x_1,x_2,\cdots,x_d)$. Then,
\begin{align*} I_1&\le c_4\int_{-\eta}^{\eta}\,dz_1\int_{\{|z_1|+|\tilde z_{d-1}|\ge
c_5(|x-y|\wedge \kappa)\}} \frac{1}{(|z_1|+|\tilde
z_{d-1}|)^{d+\alpha-1}}\,d\tilde z_{d-1}\\
&\le  c_6\int_0^\eta\,ds\int_{\{s+r>c_5(|x-y|\wedge \kappa)\}}
\frac{r^{d-2}}{(s+r)^{d+\alpha-1}}\,dr\\
&= c_6\int_{c_5(|x-y|\wedge \kappa)}^\eta\,ds\int_0^\infty
\frac{r^{d-2}}{(s+r)^{d+\alpha-1}}\,dr\\
&\quad+c_6\int^{c_5(|x-y|\wedge
\kappa)}_0\,ds\int_{c_5(|x-y|\wedge \kappa)-s}^\infty
\frac{r^{d-2}}{(s+r)^{d+\alpha-1}}\,dr\\
&\le c_7\int_{c_5(|x-y|\wedge \kappa)}^\eta s^{-\alpha}\,ds+
c_7(|x-y|\wedge \kappa)^{1-\alpha}.
\end{align*}
From the estimate above, we can get $$I_1\le c_8\begin{cases}
(|x-y|\wedge \kappa)^{1-\alpha}+\eta^{1-\alpha},& \alpha\in(0,1),\\
   \log\left(\frac{\eta}{|x-y|\wedge \kappa}\right),& \alpha=1,\\
 (|x-y|\wedge \kappa)^{1-\alpha}, &
 \alpha\in(1,2).\end{cases}$$
All the estimates hold if we reply $\Psi$ with $\Psi^{-1}$.
Therefore, according to all inequalities above and the facts that
$$\mu_\Psi=\nu\wedge (\nu\Psi)\le \nu_0\wedge (\nu_0\Psi)$$
and $$\frac{c_0}{|z|^{d+\alpha}}\I_{\{|z|\le \eta\}}\le \frac{c_0}{|z|^{d+\alpha}}\I_{\{|z_1|\le \eta\}},$$  we can
prove the second desired assertion.
\end{proof}

\subsection{Proofs of Theorems \ref{T:thm1}, \ref{T:222} and Corollary \ref{C:c3}}

We now are in a position to present proofs of Theorems \ref{T:thm1}
and \ref{T:222}.

\begin{proof}[Proof of Theorem $\ref{T:thm1}$]
Define $\mu_\Psi=\nu_0\wedge (\nu_0\Psi)$, where
$$\nu_0(dz)=\I_{\{|z|<\eta\}}\frac{c_0}{|z|^{d+\alpha}}\,dz$$ for general case, and $$\nu_0(dz)=\I_{\{0<z_1<\eta\}}\frac{c_0}{|z|^{d+\alpha}}\,dz$$ when $\sigma(x)$ is diagonal.
Without loss of generality, in the following we can assume that
$\eta>0$ is small enough.

(1) Suppose that $\alpha\in (1,2)$. Then, by Corollary \ref{C:up1} and Proposition \ref{L:up}, there exist constants $c_1,c_2>0$ such that for all $r>0$ small enough,
$J(r)\ge c_1r^{-\alpha}$ and $K(r)\le c_2 r^{1-\alpha}$. Letting $\theta>0$, we take $\psi\in C_b([0,\infty))\cap C^3((0,\infty))$ such that $$\psi(r)=r(1-\log^{-\theta}(1/r))$$ for all $r>0$ small enough, which can be extended on $(0,2]$ such that $\psi'\ge0$, $\psi''\le0$ and $\psi'''\ge0$ on $(0,2]$. With this function $\psi$ and estimates for $J(r)$ and $K(r)$ above, we have for any $c_1^*,c_2^*>0$ and for $r>0$ small enough,
\begin{align*}&J(r)r^2\psi''(2r)+c^*_1K(r)\psi'(r)r+c^*_2\psi'(r)r^\beta\\
&\le c_3r^{1-\alpha}\big(-\log^{-(1+\theta)}(1/r)+c_4r+c_5r^{\beta-1+\alpha}\big)\\
&\le -c_6r^{1-\alpha} \log^{-(1+\theta)}(1/r),\end{align*} where we
used the fact that $\beta>1-\alpha$ in the last inequality.  Hence,
the first required assertion follows from Theorem \ref{T:re}.

(2) When $\alpha\in(0,1]$, we take $\psi\in C_b([0,\infty))\cap C^3((0,\infty))$ such that $\psi(r)=r^\theta$ on $[0,2]$ with $\theta\in(0,\alpha)$. Then, following the argument above and using Theorem \ref{T:re}, we can obtain the second assertion.
 \end{proof}

 \begin{proof}[Proof of Theorem $\ref{T:222}$] Note that $\beta\in (0,1]$, and $\sigma(x)$ is bounded and Lipschitz continuous. Under \eqref{e:tss1}, we can take $K^*_1\ge K_1$ and $l^*_0\ge l_0$ large enough such that
 \begin{equation}\label{e:tss}\begin{split}
  \frac{\langle b(x)-b(y), x-y\rangle}{|x-y|}&+A\|\sigma(x)-\sigma(y)\|_{\mathrm{H.S.}}  \le \begin{cases} K^*_1|x-y|^\beta,&\quad|x-y|<l^*_0,\\
  K_2|x-y|,&\quad|x-y|\ge l^*_0\end{cases}\end{split}\end{equation}
 holds for all $x,y\in \R^d$, where
 $$A:=\int_{\{|z|>1\}}|z|\,\nu(dz)+\frac{L_\sigma}{2}\int_{\{|z|\le
 1\}}|z|^2\,\nu(dz).$$

 Similar to the proof of Theorem \ref{T:thm1}, we define $\mu_\Psi=\nu_0\wedge (\nu_0\Psi)$, where
$$\nu_0(dz)=\I_{\{|z|<\eta\}}\frac{c_0}{|z|^{d+\alpha}}\,dz$$ for general case, and $$\nu_0(dz)=\I_{\{0<z_1<\eta\}}\frac{c_0}{|z|^{d+\alpha}}\,dz$$ when $\sigma(x)$ is diagonal.
   We first consider the case that $\alpha\in(0,1)$.
 According to
Corollary \ref{C:up1} and Proposition \ref{L:up}, by choosing
$\eta$ and $\kappa$ small enough, we
have that for $r>0$ small enough,
 $$J(r)\ge c_1r^{-\alpha}$$ and
$$ A_1:=\sup_{x,y\in \R^d}\left(\Lambda \mu_{\Psi}(\R^d)(|x-y|\wedge
\kappa_0)+(1+\Lambda^2/2)\int_{\R^d}|z|\,(\mu_{\Psi}+\mu_{\Psi^{-1}})(dz)\right)\le \frac{K_2}{2L_\sigma}.$$ Hence, it follows from \eqref{e:tss} that for all $x,y\in \R^d$,
\begin{align*}
  &\frac{\langle b(x)-b(y), x-y\rangle}{|x-y|}+(A+A_1)\|\sigma(x)-\sigma(y)\|_{\mathrm{H.S.}}\\
  &\le K^*_1|x-y|^\beta+K_2|x-y|/2-\big[K^*_1|x-y|^\beta+K_2|x-y|\big]\I_{\{|x-y|\ge l^*_0\}}.\end{align*} So, assumption (ii) in Theorem \ref{thtpw} holds with $\Phi_1(r)=K^*_1r^\beta+K_2r/2$. It is clear that, in this setting assumptions (i) and (iii) in Theorem \ref{thtpw} are satisfied too. In particular, (iii) holds with $\sigma(r)=c_2r^{1-\alpha}$ for some $c_2>0$, thanks to the assumption that $\beta>1-\alpha$. Therefore, the required assertion with respect to the Wasserstein distance between $\delta_x P_t$ and $\delta_y P_t$ follows from Theorem \ref{thtpw}. Similarly, the assertion about the total variation between $\delta_x P_t$ and $\delta_y P_t$ is a consequence of Theorem \ref{thtvart}.

When $\alpha\in[1,2)$, we choose $\varepsilon\in (1-\beta,1)$ and define $\mu_\Psi=\nu_0\wedge (\nu_0\Psi)$,
 where $$\nu_0(dz)=\I_{\{|z|<\eta\}}\frac{c_0}{|z|^{d+\varepsilon}}\,dz$$  for general case, and $$\nu_0(dz)=\I_{\{0<z_1<\eta\}}\frac{c_0}{|z|^{d+\varepsilon}}\,dz$$ when $\sigma(x)$ is diagonal. Then, following the argument above, we know that (iii) in Theorem \ref{thtpw} holds with $\sigma(r)=c_3r^{1-\varepsilon}$ for some $c_3>0$, and so we can obtain the desired conclusions.
 \end{proof}

Finally, we give the
\begin{proof}[Proof of Corollary $\ref{C:c3}$] It follows from \eqref{e:tss1} that for all $x\in\R^d$ with $|x|$ large enough,
  $$\frac{\langle b(x),x\rangle}{|x|}\le -K_2|x|+\frac{\langle b(0),x\rangle }{|x|}\le -\frac{K_2}{2}|x|.$$
 Let $f\in C^3(\R^d)$ such that $f(x)=|x|$ for all $|x|\ge 1$. Then, by \eqref{e:ffgg} and the assumption that $\sigma(x)$ is bounded, for any $x\in \R^d$,
  \begin{equation}\label{ly}\begin{split}
  Lf(x)&=\int\big(f(x+\sigma(x)z)-f(x)-\langle \nabla f(x), \sigma(x)z\rangle\I_{\{|z|\le 1\}}\big)\,\nu(dz)\\
  &\quad+\langle \nabla f(x),b(x)\rangle\\
  &\le \frac{\Lambda^2}{2}\|\nabla^2f\|_\infty \int_{\{|z|\le 1\}}|z|^2\,\mu(dz)+\Lambda\|\nabla f\|_\infty  \int_{\{|z|\ge1\}}|z|\,\nu(dz)+\langle \nabla f(x),b(x)\rangle\\
  &\le - c_2|x|+c_3\le -c_4f(x)+c_5,\end{split}
  \end{equation}
where $L$ is the generator of the process $X$, and $c_i$ $(i=2,\cdots,5)$ are positive constants. On the other hand, by Theorem \ref{T:222}, there exist constants $\lambda,c>0$ such that for any $x,y\in\R^d$ and $t>0$,
  $$W_1(\delta_x P_t,\delta_yP_t)\le c e^{-\lambda t} |x-y|,$$
which yields that (e.g.,\, see \cite[Theorem 5.10]{Chen1})
  $$\|P_tf\|_{{\rm Lip}}\le c e^{-\lambda t}\|f\|_{{\rm Lip}}$$
holds for any $t>0$ and any Lipschitz continuous function $f$, where $\|f\|_{{\rm Lip}}$ denotes the Lipschitz semi-norm with respect to the Euclidean norm $|\cdot|$. By the standard approximation, we know that the semigroup $(P_t)_{t\ge0}$ is Feller, i.e.,\, for every $t>0$, $P_t$ maps $C_b(\R^d)$ into $C_b(\R^d)$. This along with the Foster--Lyapunov type condition \eqref{ly} and \cite[Theorems 4.5]{MT} yields that the process $(X_t)_{t\ge0}$ has an invariant probability measure such that whose first moment is finite. Next, we claim that the process $X$ has a unique invariant probability measure. Indeed, let $\mu_1$ and $\mu_2$ be invariant probability measures of the process $X$ such that both of them have finite moment. Then, by Theorem \ref{T:222},
  \begin{align*}\|\mu_1-\mu_2\|_{{\rm Var}}&=\sup_{\|f\|_\infty\le 1}|\mu_1(f)-\mu_2(f)|\\
  &\le\sup_{\|f\|_\infty\le1}\iint|P_tf(x)-P_tf(y)|\,\mu_1(dx)\,\mu_2(dy)\\
 &\le \iint\|\delta_xP_t-\delta_yP_t\|_{{\rm Var}}\,\mu_1(dy)\,\mu_2(dx)\le ce^{-\lambda t}.\end{align*}
Letting $t\to\infty$, we find that $\mu_1=\mu_2$.

According to Theorem \ref{T:222}, we can get that for any probability measures $m_1$ and $m_2$ and any $t>0$,
$$W_1(m_1P_t, m_2P_t)\le c e^{-\lambda t} W_1(m_1,m_2),$$ e.g.,\ see \cite[Section 3]{M15}. Thus, for any $t>0$ and $x\in \R^d$,
$$W_1(\delta_xP_t, \mu P_t)\le c e^{-\lambda t} W_1(\delta_x,\mu)\le c_1(x)  e^{-\lambda t}.$$ Also by Theorem \ref{T:222},
\begin{align*}\|\delta_xP_t-\mu\|_{{\rm Var}} \le& \int  \|\delta_xP_t-\delta_yP_t\|_{{\rm Var}}\,\mu(dy)\le ce^{-\lambda t}\int (1+ |x-y|)\,\mu(dy)\\
 \le& c_2(x)e^{-\lambda t}.\end{align*}
The proof is complete.\end{proof}

\ \

\noindent{\bf Acknowledgements.} The research is supported by National
Natural Science Foundation of China (No.\ 11522106), the Fok Ying Tung
Education Foundation (No.\ 151002), 
and the Program for Probability and Statistics: Theory and Application (No. IRTL1704).


\begin{thebibliography}{99}
\bibitem{A0}
Arapostathis, A., Biswas, A. and Caffarelli, L.: The Dirichlet
problem for stable-like operators and related probabilistic
representations, \emph{Comm. Partial Differential Equations}
\textbf{41} (2016), 1472--1511.


\bibitem{BLP}
Barczy, M., Li, Z.\ and Pap, G.: Yamada-Watanabe results for
stochastic differential equations with jumps, \emph{International Journal of Stochastic Analysis} \textbf{2015} (2015), Article ID 460472, 23 pages.


\bibitem{BBC}
Bass, R.F., Burdzy, K. and Chen, Z.-Q.: Stochastic differential
equations driven by stable processes for which pathwise uniqueness
fails, \emph{Stoch. Proc. Appl.} \textbf{111} (2004), 1--15.


\bibitem{Chen1}
Chen, M.-F.: \emph{From Markov Chains to Non-Equilibrium Particle Systems,} World Scientific Publishing
Co., Inc., River Edge, NJ, Second edition, 2004.


\bibitem{Chen}
Chen, M.-F.: \emph{Eigenvalues, Inequalities, and Ergodic Theory,} Springer-Verlag London, Ltd., London,
2005.

\bibitem{CL}
Chen, M.-F. and  Li, S.-F.:  Coupling methods for multi-dimensional
diffusion process, \emph{Ann. Probab.} \textbf{17} (1989), 151--177.




\bibitem{HP}
Haadem, S. and Proske, F.: On the construction and Malliavin
differentiability of solutions of L\'evy noise driven SDE's with
singular coefficients, \emph{J. Funct. Anal.} \textbf{266} (2014),
5321--5359.

\bibitem{K0}
Kulik, A.: Exponential ergodicity of the solutions to SDE's with a jump noise, \emph{Stoch. Proc. Appl.}
\textbf{119} (2009), 602--632.



\bibitem{M0} Masuda, H.: Ergodicity and exponential $\beta$-mixing bounds for multidimensional diffusions with
jumps, \emph{Stoch. Proc. Appl.} \textbf{117} (2007), 35--56.



\bibitem{Lin}
Lindvall, T.: \emph{Lectures on the Coupling Method}, Wiley, New York, 1992.

\bibitem{LR}
Lindvall, T.\ and Rogers, L. C. G.: Coupling of multidimensional diffusions by reflection, \emph{Ann.\ Probab.} \textbf{14} (1986), 860--872.


\bibitem{LW}
Luo, D. and Wang, J.: Exponential convergence in $L^p$-Wasserstein
distance for diffusion processes without uniformly dissipative
drift, \emph{Math. Nachr.} {\bf289} (2016), no.\ 14--15, 1909--1926.

\bibitem{LW14}
Luo, D. and Wang, J.: Coupling by reflection and H\"{o}lder
regularity for non-local operators of variable order,
arXiv:1403.3163, to appear in \emph{Trans. Amer. Math. Soc.}

\bibitem{Lwcw} Luo, D.  and Wang, J.: Refined basic couplings and Wasserstein-type distances for SDEs with L\'evy noises, arXiv:1604.07206



\bibitem{M15}
Majka, M.: Coupling and expoential ergodicity for stochastic differential equations driven by L\'{e}vy process, \emph{Stoch. Proc.
Appl.} {\bf 154} (2017), 929--949.





\bibitem{MT}
Meyn, S.P. and Tweedie, R.T.: Stability of Markovian processes III: Foster--Lyapunov criteria for comtinuous time processes, \emph{Adv. Appl. Probab.} \textbf{25} (1993), 518--548.

\bibitem{Pr1} Priola, E.: Pathwise uniqueness for singular SDEs driven by stable processes, \emph{Osaka Journal of Mathematics}
{\bf49} (2012), 421--447.

\bibitem{Pr2} Priola, E.: Stochastic flow for SDEs with jumps and irregular drift term, arXiv:1405.2575

\bibitem{Pr3} Priola, E.: Davie's type uniqueness for a class of SDEs with jumps,
arXiv: 1509.07448

\bibitem{PW}
Priola, E. and Wang, F.-Y.: Gradient estimates for diffusion semigroups with singular coefficients, \emph{J. Funct. Anal.} \textbf{236} (2006), 244--264.



\bibitem{Tak}
Takeuchi, A.: Bismut-Elworthy-Li-type formulae for stochastic differential equations with jumps, \emph{J. Theoret. Probab.} {\bf 23} (2010), 576--604.



\bibitem{TTW}
Tanaka, H., Tsuchiya, M. and Watanabe, S.: Perturbation of drift-type for L\'evy processes, \emph{J. Math. Kyoto
Univ.} {\bf 14} (1974), 73--92.

\bibitem{Tho}
Thorrison, H.: \emph{Coupling, Stationary, and Regenration}, Springer-Verlag, 2000.

\bibitem{Wangbook}
Wang, F.-Y.: \emph{Analysis for Diffusion Processes on Riemannian Manifolds}, World Scientific Ltd., Singapore,
2014.


\bibitem{Wang16}
Wang, F.-Y.: Exponential contraction in Wasserstein distances for diffusion semigroups with negative curvature, arXiv:1603.05749

\bibitem{WXZ}
Wang, F.-Y., Xu, L. and Zhang, X.: Gradient estimates for SDEs
driven by multiplicative L\'{e}vy noises, \emph{J. Funct. Anal.}
\textbf{269} (2015), 3195--3219.


\bibitem{WXZ1}
Wang, L., Xie, L. and Zhang, X.: Derivative formulae for SDEs driven
by multiplicative $\alpha$-stable like processes, \emph{Stoch. Proc.
Appl.} \textbf{125} (2015), 867--885.

\bibitem{Wang}
Wang, J.: $L^p$-Wasserstein distance for stochastic differential
equations driven by L\'{e}vy processes,
\emph{Bernoulli} \textbf{22} (2016), 1598--1616.

\bibitem{XWang}
Xie, L. and Zhang, X.: Ergodicity of stochastic differential equations with jumps and singular coefficients, arXiv:1705.07402


\bibitem{Z}
Zhang, X.: Stochastic differential equations with Sobolev drifts and driven by $\alpha$-stable processes, \emph{Ann. Inst.
H. Poincar\'e Probab. Statist.} \textbf{49} (2013), 1057--1079.

\end{thebibliography}
\end{document}